\documentclass[oneside,11pt,reqno]{amsart}
 
\usepackage{amsmath, amscd}



\usepackage{amsfonts}
\usepackage{amssymb}
\usepackage{amsxtra}
\usepackage{tikz-cd}
\usepackage{hyperref}
\usepackage{enumerate}
\usepackage{amsthm}

\usepackage{amsmath}
\usepackage{amssymb}
\usepackage{graphicx, overpic}
\usepackage{caption}
\usepackage{subcaption}
\usepackage{pinlabel}
\usepackage{floatrow}
\usepackage{soul}
\usepackage{tikz}
\usetikzlibrary{decorations.pathreplacing}
\usetikzlibrary{decorations.markings}
\usetikzlibrary{shapes.geometric}
\usepackage{color}
\usetikzlibrary{shapes.gates.logic.US,trees,positioning,arrows}
\usetikzlibrary{knots}
 \usepackage{float}
 
 \usepackage{eucal}
\usepackage[all]{xy}
\usepackage{multicol}

\usepackage{mathtools}


\setlength{\oddsidemargin}{0in}
\setlength{\textwidth}{6.25in}
\setlength{\topmargin}{-.33in}
\setlength{\textheight}{9in}


\newtheorem{thm}{Theorem}[section]
\newtheorem{prop}[thm]{Proposition}
\newtheorem{lem}[thm]{Lemma}

\newtheorem*{main}{Main Theorem}

\theoremstyle{definition}
\newtheorem{defn}[thm]{Definition}
\newtheorem{rem}[thm]{Remark}
\newtheorem{conv}[thm]{Convention}

\newtheorem{ques}[thm]{Question}

\newtheorem*{prop-MC}{Proposition 4.28}

\renewcommand{\bar}[1]{\overline{#1}}

\newcommand{\set}[2]{\{\,{#1} \mid {#2} \,\}}

\newcommand{\field}[1]{\mathbb{#1}}
\newcommand{\Z}{\field{Z}}

\newcommand{\R}{\field{R}}

\newcommand{\E}{\field{E}}

\newcommand{\F}{\field{F}}

\newcommand{\NN}{\field{N}}

\newcommand{\hct}[1]{\textcolor{blue}{#1}}

\DeclareMathOperator{\Dist}{Dist}

\DeclareMathOperator{\CAT}{CAT}

\DeclareMathOperator{\Area}{Area}




\DeclareMathOperator{\diam}{diam}

\hyphenation{ge-o-des-ic ge-o-des-ics quasi-convex quasi-convex-ity
     neu-tered}


\newcommand{\showcomments}{yes}

\usepackage{ifthen}
\newsavebox{\commentbox}
%
{\ifthenelse{\equal{\showcomments}{yes}}%
{\footnotemark
    \begin{lrbox}{\commentbox}
    \begin{minipage}[t]{1.25in}\raggedright\sffamily\tiny
    \footnotemark[\arabic{footnote}]}
{\begin{lrbox}{\commentbox}}}%
{\ifthenelse{\equal{\showcomments}{yes}}%
{\end{minipage}\end{lrbox}\marginpar{\usebox{\commentbox}}}
{\end{lrbox}}}


\begin{document}

\title[Superexponential Dehn functions inside $\CAT(0)$ groups]{Superexponential Dehn functions inside $\CAT(0)$ groups}

\begin{abstract}
 We construct 4--dimensional $\CAT(0)$ groups containing finitely presented subgroups whose Dehn functions are $\exp^{(n)}(x^m)$ for integers $n, m \geq 1$ and 6--dimensional $\CAT(0)$ groups containing finitely presented subgroups whose Dehn functions are $\exp^{(n)}(x^\alpha)$ for integers $n \geq 1$ and $\alpha$ dense in $[1,\infty)$. This significantly expands the known geometric behavior of subgroups of $\CAT(0)$ groups.
\end{abstract} 

\author{Noel Brady}
\address{University of Oklahoma, Norman, OK 73019-3103, USA}
\email{nbrady@ou.edu}

\author{Hung Cong Tran}
\address{University of Oklahoma, Norman, OK 73019-3103, USA}
\email{Hung.C.Tran-1@ou.edu}

\maketitle

\section{Introduction}

$\CAT(0)$ geometry plays a fundamental role in geometric group theory and low dimensional topology. For example, in the proof of the Virtual Fibering Theorem \cite{MR3104553}, $\CAT(0)$ cube complexes are used to encode immersed incompressible surfaces in a 3--manifold and this $\CAT(0)$ cubical structure is used to understand finite covers of the manifold. In another direction, the $\CAT(0)$ framework and Morse theory perspective introduced in \cite{MR1465330} to produce groups of type FP$_2$ which are not finitely presented has resulted in a range of applications including \cite{MR1943744}, \cite{MR4176549} and \cite{2018arXiv180910594K}.




$\CAT(0)$ groups are fundamental groups of compact, path metric spaces whose universal covers satisfy the $\CAT(0)$ triangle inequality. See \cite{MR1744486} for definitions and background on $\CAT(0)$ spaces and groups. $\CAT(0)$ groups have strongly controlled topology and algebra, including having finite Eilenberg-Maclane spaces, solvable word and conjugacy problems, tightly controlled solvable subgroups, and quadratic upper bounds for their Dehn functions. They have been intensively studied over the years. 

Much less is known about the geometry and structure of finitely presented subgroups of $\CAT(0)$ groups. In this paper we focus on Dehn functions. In \cite{MR1489138} there are examples of automatic groups containing finitely presented subgroups with exponential or polynomial Dehn functions of arbitrary degree. The ambient groups in \cite{MR1489138} are $\CAT(0)$. In \cite{MR3705143} there are examples of $\CAT(0)$ groups containing finitely presented subgroups whose Dehn functions are $x^\alpha$ for a dense set of exponents $\alpha \in [2,\infty)$. 
Throughout this time there were no examples of $\CAT(0)$ groups containing finitely presented subgroups with superexponential Dehn functions. Question 10.7 of \cite{Br07} asks if there is a universal upper bound on the Dehn functions of finitely presented subgroups of $\CAT(0)$ groups. 


The goal of the present paper is to shed light on the question above. Such a universal upper bound would have to be greater than any finite iteration of exponential functions. 

\begin{main}[paraphrased]
Let $n$ be a positive integer and define $\exp^{(n)}(x)$ inductively by $\exp^{(1)}(x) = \exp(x)$ and $\exp^{(k+1)}(x) = \exp(\exp^{(k)}(x))$ for $k \geq 1$. Then 
\begin{enumerate}
    \item There are $4$--dimensional $\CAT(0)$ groups containing finitely presented subgroups whose Dehn functions are equivalent to $\exp^{(n)}(x^m)$ for integers $m \geq 1$. 
    \item  There are 6--dimensional $\CAT(0)$ groups containing finitely presented subgroups whose Dehn functions are equivalent to $\exp^{(n)}(x^\alpha)$ for $\alpha$ dense in $[1,\infty)$.
\end{enumerate}
\end{main}

The $\CAT(0)$ groups in the Main Theorem are obtained from two constructions of non-positively curved spaces; the \emph{ultra-convex chaining} procedure and the \emph{factor-diagonal chaining} procedure. These are described in detail in Section~5. The ultra-convex chaining procedure builds a non-positively curved (that is, locally $\CAT(0)$) 2--complex from a sequence of non-positively curved 2--complexes by identifying a non-convex rose (bouquet of circles) in one 2--complex in the sequence with an ultra-convex rose in the next 2--complex. Theorem~\ref{prop:fbyf} provides a key ingredient for this procedure; namely, a $\CAT(0)$ $F_\ell \rtimes F_k$ group with $F_k$ ultra-convex. 
The factor-diagonal chaining process builds non-positively curved spaces from a sequence of metric product spaces by identifying a factor space in one product with a diagonal subspace in the next.

The subgroups with large Dehn functions are constructed using a standard technique; namely, by doubling groups over highly distorted subgroups. The subgroup distortion functions contain iterated exponentials. These are the result of chaining together a sequence of the hyperbolic free-by-free groups constructed in Theorem~\ref{prop:fbyf}. 

The tricky part is to embed these groups into $\CAT(0)$ groups. This is achieved by the factor-diagonal chaining construction and a group embedding result. The group embedding part of the main theorem is provided by Proposition~\ref{prop:embed0} which uses a result of Bass to prove that a particular morphism of graphs of groups induces an inclusion of fundamental groups. The structure of the group embedding proposition requires that the factor-diagonal chaining parallels (and has as many terms as) the ultra-convex chaining. 

The various power functions inside the iterated exponential are obtained by choosing suitable $\CAT(0)$ ingredients to start the construction. For conclusion (1) of the Main Theorem we use $\CAT(0)$ groups of the form $(F_k \rtimes \Z) \times \Z$ and for conclusion (2) we use the ambient $\CAT(0)$ groups which contain the snowflake groups of \cite{MR3705143}. 

This paper is organized as follows. Section 2 provides background on Dehn functions, distortion functions of subgroups, and $\CAT(0)$ spaces. Section~3 establishes the main group embedding result. Section~4 concerns the computation of the Dehn functions of the finitely presented subgroups. In Section~5 the ambient $\CAT(0)$ groups are constructed. Section~6 contains a statement and proof of the Main Theorem and a list of open questions which are related to this research.

\subsection*{Acknowledgements} We thank Eduardo Mart\'{i}nez-Pedroza for helpful conversations about the questions in Section~6. We thank the referee for their careful reading and commentary and for providing some new references.

The first author was supported by Simons Foundation collaboration grant \#430097. 
The second author was supported by an AMS-Simons Travel Grant.

\section{Background}

The main theorem of this paper concerns Dehn functions of finitely presented subgroups of $\CAT(0)$ groups. The large Dehn functions arise from taking doubles of groups over highly distorted subgroups. The first subsection provides background on equivalence of functions, Dehn functions of groups, and distortion functions of subgroups. The second subsection gives background on non-positively curved, piecewise euclidean 2--complexes and on constructions related to products of non-positively curved spaces. 

\subsection{Dehn functions and subgroup distortion} 

The Dehn functions and distortion functions defined below are all computed up to an equivalence which we describe. 

\begin{conv}
\label{cv}
Let $\mathcal{M}$ be the collection of all functions from $[1,\infty)$ to $[1,\infty)$. Let $f$ and $g$ be arbitrary elements of $\mathcal{M}$. We say that $f$ is \emph{dominated} by $g$, denoted $f \preceq g$, if there is a constant $C\geq 1$ such that $f(x) \leq Cg(Cx) + Cx$ for all $x\geq 1$. We say that $f$ is \emph{equivalent} to $g$, denoted $f\sim g$, if $f\preceq g$ and $g\preceq f$. This defines an equivalence relation on $\mathcal{M}$. \end{conv}

We now recall the definitions of Dehn function and subgroup distortion.

\begin{defn}[Dehn function and isoperimetric inequality]
Let $G=\langle S|R \rangle$ be a finitely presented group and $w$ a word in the generating set $S$ representing the trivial element of $G$. We define the \emph{area} of $w$ to be

$$\Area(w) = \min \set{N\in\NN}{\exists \text{ equality } w=\prod_{i=1}^{N} u_ir_iu_i^{-1} \text{ freely, where } r_i\in R^\pm}$$
The \emph{Dehn function} $\delta(x)$ of the finite presentation $G=\langle S|R \rangle$ is given by
$$\delta(x)=\max\set{\Area(w)}{w\in \ker(F(S)\to G), |w|\leq x}$$
where $|w|$ denotes the length of the word $w$.

Using Convention~\ref{cv} it is not difficult to show that two finite presentations of the same group define equivalent Dehn functions, we therefore speak of ``the'' Dehn function of $G$, which is
well-defined up to equivalence.

If the Dehn function of the finitely presented group $G$ is dominated by a polynomial function we say that $G$ \emph{satisfies a polynomial isoperimetric inequality.}
\end{defn}

\begin{defn}[subgroup distortion]
Let $H \leq G$ be a pair of finitely generated groups, and let $d_H$ and $d_G$ be the word metrics associated to a choice of finite generating set for each. The \emph{distortion} of $H$ in $G$ is the function
$$\Dist^G_H(x) = \max\set{d_H(1,h)}{h\in H \text{ with } d_G(1,h)\leq x}.$$
Up to the equivalence relation in Convention~\ref{cv}, this function is independent of the choice of word metrics $d_G$ and $d_H$.
\end{defn}

The following theorem relates subgroup distortion to the Dehn functions of doubles.

\begin{thm}[\cite{MR1744486}, III.$\Gamma$.6.20]
\label{BH1}
Let $G$ be a finitely presented group with Dehn function $\delta_G$ and let $H\leq G$ be a finitely presented subgroup. Let $\Dist^G_H$ be the distortion of $H$ in $G$ with respect to some choice of word metrics. The Dehn function $\delta_\Gamma$ of the double group $\Gamma=G*_H G$ satisfies
$$\Dist^G_H(x)\preceq \delta_\Gamma(x)\preceq x\bigl(\delta_G\circ \Dist^G_H\bigr)(x).$$
\end{thm}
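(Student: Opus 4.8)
The plan is to establish the two inequalities separately, working throughout with the standard amalgam presentation of $\Gamma$. First I would fix a finite presentation $G = \presentation{S}{R}$ whose generating set $S$ contains a finite generating set $T$ of $H$; then $\Gamma = G *_H G$ is presented by $\presentation{S_1\sqcup S_2}{R_1\sqcup R_2 \sqcup \set{t_1 t_2^{-1}}{t\in T}}$, where subscripts $1,2$ index the two copies of $G$, and the ``amalgamating'' relators $t_1 t_2^{-1}$ are bigons. The two facts I would lean on are the normal form theorem for amalgamated products and the fact that each factor embeds in $\Gamma$.

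\emph{Lower bound $\Dist^G_H \preceq \delta_\Gamma$.} For $x\ge 1$ I would choose $h\in H$ with $d_G(1,h)\le x$ realizing $d_H(1,h) = \Dist^G_H(x)$, take a geodesic word $\sigma$ for $h$ over $S$, and consider $w = \sigma_1\sigma_2^{-1}$, where $\sigma_i$ is $\sigma$ written in the $i$-th copy. Since $h$ lies in the amalgamated subgroup, $\overline{\sigma_1} = \overline{\sigma_2}$ in $\Gamma$, so $w$ is null-homotopic and $\abs{w}\le 2x$. In a minimal van Kampen diagram $D$ for $w$, the only $2$-cells carrying edges from both copies are the bigons $t_1 t_2^{-1}$; deleting their interiors separates $D$ into a copy-$1$ subdiagram $D_1$ (a diagram over $\presentation{S_1}{R_1}$) and a copy-$2$ subdiagram, and—after a routine surgery removing innermost bigon bubbles—the frontier of $D_1$ is a single arc $\gamma$ labelled by a word over $T_1$, running between the two corners of $\partial D$. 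Reading $\partial D_1$ in $G$ forces $\overline{\gamma} = h^{-1}$, so $\abs{\gamma}\succeq d_H(1,h) = \Dist^G_H(x)$; and $\gamma$ meets at least $\abs{\gamma}$ distinct bigons, so $\Area_\Gamma(w)\succeq \Dist^G_H(x)$. Hence $\delta_\Gamma(2x)\succeq \Dist^G_H(x)$, which gives the claim.

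\emph{Upper bound $\delta_\Gamma(x)\preceq x(\delta_G\circ\Dist^G_H)(x)$.} Given a null-homotopic $w$ with $\abs{w}\le x$, I would write $w = s_1 s_2\cdots s_k$ as a product of at most $x$ nonempty syllables, alternately over $S_1$ and over $S_2$. Since $\overline{w} = 1$, the normal form theorem says this syllable sequence is unreduced, so some syllable $s_j$ represents an element of $H$; letting $\hat h$ be an $H$-geodesic word for $\overline{s_j}$, I would rewrite $s_j$ to $\hat h$ inside its own copy of $G$ (an equality in that copy, costing $\le\delta_G(\abs{s_j}+\abs{\hat h})$ relators), push $\hat h$ to the other copy using $\abs{\hat h}$ bigon relators, and merge it with its two neighbouring syllables—dropping the syllable count by two. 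Iterating at most $x/2$ times leaves a single null-homotopic syllable over one $S_i$, which I would fill over $R$. Summing, $\Area_\Gamma(w)$ is at most $O(x)$ times the largest cost of one reduction.

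\emph{The hard part} is the bookkeeping: one must verify that every appeal to $\delta_G$ or $\Dist^G_H$ above is at an argument of size $O(x)$, not at some iterated quantity produced by the inserted $H$-subwords accumulating over the reductions. I would control this with three observations: the retraction $\Gamma\to G$ collapsing one factor onto the other is $1$-Lipschitz on generators, so any element to which a sub-block of the current word collapses has $G$-distance to $1$ at most its $\Gamma$-distance to $1$, hence at most the current word length; $\Dist^G_H$ is superadditive enough that the inserted $H$-words have total length $\preceq\Dist^G_H(x)$ because the pinched syllables of $w$ have lengths summing to $\le\abs{w}\le x$; and performing the pinches in an ``innermost first'' order keeps every intermediate syllable of length $\preceq\Dist^G_H(x)$, so each reduction costs $\preceq\delta_G(\Dist^G_H(x))+\Dist^G_H(x)\preceq(\delta_G\circ\Dist^G_H)(x)$. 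Granting this, the $O(x)$ reductions give $\Area_\Gamma(w)\preceq x(\delta_G\circ\Dist^G_H)(x)$. (An equivalent route I would consider is to decompose a van Kampen diagram for $w$ over the amalgam presentation into $O(x)$ subdiagrams over the $\presentation{S_i}{R_i}$ glued along $H$-geodesic corridors and estimate the total area and corridor length directly; the same estimates reappear there.)
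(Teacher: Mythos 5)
The paper presents this as a cited theorem (Bridson--Haefliger III.$\Gamma$.6.20) with no proof, so there is no in-paper argument to compare against. Your strategy---a separating-arc count for the lower bound and normal-form syllable-pinching for the upper bound---is the standard one and does give the result; the lower-bound sketch is essentially right. The difficulty you correctly flag is the upper-bound bookkeeping, and your first observation, as literally stated, does not close it: bounding the $G$-distance of a pinched $H$-element only by ``the current word length'' gives $|\hat h|\le\Dist^G_H\bigl(x+\Dist^G_H(x)\bigr)$, which upon iteration compounds into nested $\Dist^G_H\circ\cdots\circ\Dist^G_H$, not the single $\Dist^G_H(x)$ in the target bound.

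The fact you actually need is sharper: every syllable of every intermediate word represents, in $\Gamma$, the same element as some contiguous sub-block of the \emph{original} word $w$ (because each inserted $\hat h$ equals, in $\Gamma$, the block it replaced, and a merge $s_{j-1}\hat h\,s_{j+1}$ again corresponds to a contiguous block of $w$). Hence every pinched $H$-element has $\Gamma$-length $\le|w|\le x$, so $G$-length $\le x$ via the $1$-Lipschitz retraction, so $|\hat h|\le\Dist^G_H(x)$ uniformly, with no iteration. Similarly, your phrase ``the pinched syllables of $w$ have lengths summing to $\le|w|$'' is false if read over the entire run, since those blocks nest; the true and sufficient statement is that at any one moment the $\hat h$'s currently present in the word correspond to pairwise \emph{disjoint} blocks of $w$ (pinching a syllable deletes the $\hat h$'s inside it), so superadditivity of a majorant of $\Dist^G_H$ bounds their total length by $\Dist^G_H(x)$, and hence every intermediate word---and every syllable---has length $\le x+\Dist^G_H(x)\preceq\Dist^G_H(x)$. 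Once these two points are precise, the ``innermost-first'' ordering is unnecessary, your per-reduction estimate $\preceq\delta_G(\Dist^G_H(x))$ is valid, and the bound $x\,\delta_G(\Dist^G_H(x))$ follows.
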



\subsection{{${\rm CAT}(0)$} geometry.}
The reader is referred to \cite{MR1744486} for definitions of and background on $\CAT(0)$ spaces and groups. The phrase \emph{non-positively curved} means the same thing as \emph{locally $\CAT(0)$}. The universal covering space of a non-positively curved space is a $\CAT(0)$ space. A $\CAT(0)$ group is one which acts properly, cocompactly by isometries on a $\CAT(0)$ metric space. In particular, the fundamental group of a compact, non-positively curved space is a $\CAT(0)$ group.

This paper makes extensive use of non-positively curved piecewise euclidean 2--complexes and metric products of these complexes. We include pertinent definitions and results. 

A \emph{convex polygon} in $\E^2$ is a compact subspace of $\E^2$ which is a finite intersection of halfspaces. 

\begin{defn}[PE 2--complex]
A \emph{piecewise euclidean 2--complex} $X$ is a 2--dimensional cell complex with the following additional structure. 
\begin{enumerate}
    \item The 0--skeleton $X^{(0)}$ is a discrete set.
    \item Each 1--cell $e$ of $X$ consists of a segment $C_e \subseteq \R \subseteq \R^2 = \E^2$ and an attaching map $\varphi_e: \partial C_e \to X^{(0)}$. An \emph{admissible characteristic map} of a $1$--cell $e$ is the standard characteristic map $\chi_e: C_e \to X^{(1)}$ precomposed with an isometry of $\E^2$. 
    \item Each 2--cell $f$ of $X$ consists of a convex polygon $C_f \subseteq \E^2$ and an attaching map $\varphi_f: \partial C_f \to X^{(1)}$ satisfying the following glueing property. The restriction of $\varphi_f$ to each edge of $C_f$ is an admissible characteristic map of a 1-cell of $X$. 
\end{enumerate}
\end{defn}

\begin{defn}[link]
Assume that the length of the shortest $1$--cell of $X$ is 1. Given $v \in X^{(0)}$, one can define the \emph{link of $v$ in $X$} to be 
$$
Lk(v, X) \; =\;\partial B_{\frac{1}{4}}(v,X). 
$$
This is the boundary of a metric ball of radius 1/4 centered on $v$. 
A piecewise spherical metric on $Lk(v,X)$ is defined by taking the induced path metric on $Lk(v,X)$ defined above and rescaling by 4. In this rescaled metric, the length of an edge of $Lk(v,X)$ is equal to the angle at the vertex $w$ in the polygon $C_f$ where $\chi_f(w) = v$ and $\chi_f(C_f \cap B_{\frac{1}{4}}(w, C_f))$
is the given edge. 
\end{defn}

The \emph{link condition} of Theorem~\ref{lkcondition} below will be used multiple times in Section~5 of this paper. 
See \cite{MR1744486}, II.5.5 and II.5.6 for a proof. 

\begin{thm}[link condition for non-positively curved 2--complexes]
\label{lkcondition}
A finite piecewise euclidean 2--dimensional complex $K$ is a non-positively curved space if and only if for each vertex $v\in K$ every injective loop in the piecewise spherical metric on $Lk(v, K)$ has length at least $2\pi$.
\end{thm}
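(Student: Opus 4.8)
The plan is to convert the global-sounding hypothesis into a condition at each vertex of $K$ and then to read that condition off the links. Being non-positively curved is by definition a local property, so it suffices to produce a $\CAT(0)$ neighbourhood of every point $p\in K$. If $p$ lies in the interior of a $2$--cell, a small metric ball about $p$ is isometric to a convex subset of $\E^2$, hence $\CAT(0)$. If $p$ lies in the interior of a $1$--cell $e$, a small neighbourhood of $p$ is isometric to a metric product $Y\times\R$, where the $\R$ factor is the direction of $e$ and $Y$ is the metric star recording the $2$--cells glued along $e$ near $p$; a star is a tree, hence $\CAT(0)$, and metric products of $\CAT(0)$ spaces are $\CAT(0)$, so there is nothing to check at such points. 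The only substantive case is $p=v\in K^{(0)}$: for $\varepsilon$ smaller than half the length of the shortest $1$--cell, $\ball{v}{\varepsilon}$ is isometric to the $\varepsilon$--ball in the Euclidean cone on the piecewise spherical complex $\Lk(v,K)$, and a Euclidean cone over a piecewise spherical complex is $\CAT(0)$ precisely when that complex is $\CAT(1)$. Hence $K$ is non-positively curved if and only if $\Lk(v,K)$ is $\CAT(1)$ for every vertex $v$.

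It then remains to characterise when the finite metric graph $L:=\Lk(v,K)$ — all of whose edges have length strictly between $0$ and $\pi$ — is $\CAT(1)$. The key point is that in a metric graph the angle between two distinct edges at a common vertex equals $\pi$, so a loop is a local geodesic exactly when it never backtracks; consequently every injective loop in $L$ is a simple closed geodesic, and conversely a shortest closed geodesic in $L$ must be injective (were it to return to a vertex it would decompose into sub-loops, from which one could extract an embedded closed loop of strictly smaller length). Granting this dictionary, one implication follows from the standard fact that a $\CAT(1)$ space contains no closed geodesic of length less than $2\pi$ (a consequence of uniqueness of geodesics between points at distance less than $\pi$): if $L$ is $\CAT(1)$, then every injective loop has length at least $2\pi$. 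For the converse, suppose every injective loop in $L$ has length at least $2\pi$; then by the dictionary $L$ has no closed geodesic of length less than $2\pi$, the vertex links of $L$ are $0$--dimensional piecewise spherical complexes and so are $\CAT(1)$ automatically, and the general form of the link condition — a piecewise spherical complex is $\CAT(1)$ precisely when all its vertex links are $\CAT(1)$ and it contains no closed geodesic shorter than $2\pi$ — yields that $L$ is $\CAT(1)$. Combining the two bi-implications proves the theorem.

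The main obstacle is that the two structural inputs used above are themselves the substance of the matter: that the Euclidean cone on a piecewise spherical complex is $\CAT(0)$ exactly when that complex is $\CAT(1)$, and that $\CAT(1)$ for piecewise spherical complexes is detected by vertex links together with the absence of closed geodesics shorter than $2\pi$. These are proved together by an induction on dimension, with Berestovskii's cone theorem and developing/gluing arguments doing the work; this is carried out in \cite{MR1744486}. In the $2$--dimensional situation at hand one can instead argue more directly: local geodesics in a PE $2$--complex are piecewise linear, so one develops a reduced, minimal-area disc diagram into $\E^2$ and applies the combinatorial Gauss--Bonnet identity $\sum_{v \text{ interior}}\bigl(2\pi-\theta(v)\bigr)+\sum_{v \text{ boundary}}\bigl(\pi-\theta(v)\bigr)=2\pi$, where $\theta(v)$ is the cone angle at $v$; the hypothesis that every injective loop in every link has length at least $2\pi$ is exactly what forces $\theta(v)\ge 2\pi$ at each interior vertex, and the resulting sign control on the boundary contributions rules out geodesic bigons and delivers the $\CAT(0)$ comparison inequality for small triangles in $\widetilde K$. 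Either route leaves only routine, if somewhat lengthy, verifications.
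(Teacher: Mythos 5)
Your proposal is correct, and it is essentially the same argument as the paper's: the paper does not prove this theorem itself but cites \cite{MR1744486}, II.5.5--5.6, and your sketch (reduce to a local check, Berestovskii's cone theorem at vertices, then the one-dimensional dictionary between injective loops and short closed geodesics in a metric graph, with the combinatorial Gauss--Bonnet route as the two-dimensional shortcut) is precisely the standard proof given there. The only thing worth flagging is that you are candid about deferring the two structural inputs (cone theorem and the piecewise-spherical link criterion) to the same reference, so the proposal is a faithful reduction rather than a self-contained alternative --- which matches the paper's own treatment.
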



\medskip

The factor-diagonal chaining construction in Section~5 of this paper uses properties of metric products of non-positively curved spaces. Here are relevant definitions and background results.

Let $(X, d_1)$ and $(Y, d_2)$ be two metric spaces. 
The \emph{product metric} on $X \times Y$ is defined by 
$$
d((x_1, y_1), (x_2, y_2)) \; =\; \sqrt{(d_1(x_1, x_2))^2 + (d_2(y_1, y_2))^2}.
$$

Let $A$ and $X$ be non-positively curved metric spaces. A map $f\!\!: A \to X $ is said to be a \emph{locally isometric embedding} if every point in $A$ has a neighborhood $N$ such that $f_{|N}\!\!: N \to X$ is an isometric embedding. A subspace $B \subset X$ of a non-positively curved space $X$ is said to be \emph{locally convex} if every point of $B$ has a neighborhood $N$ such that the geodesics connecting every pair of points of $B \cap N$ are contained in $B$. Note that if $f\!\!:A \to X$ is a locally isometric embedding, then $f(A)$ is locally convex in $X$. Also, if $B \subset X$ is locally convex, then the inclusion $B \hookrightarrow X$ is a locally isometric embedding.

\begin{lem}[product spaces and diagonals]\label{pd}
Let $X$ and $Y$ be non-positively curved spaces. Then
\begin{enumerate}
    \item The metric space $X\times Y$ (equipped with the product metric) is non-positively curved.
    \item  For any choice of $y_0\in Y$ the map $f\!:X\to X\times Y, x\mapsto (x,y_0)$ is a locally isometric embedding. Moreover, the map $f$ induces a monomorphism 
$f_*\!:\pi_1(X)\to \pi_1(X\times Y)$ with image $\pi_1(X)\times 1$.
    \item Assume that $Y\subset X$ and the inclusion of $Y$ into $X$ is a locally isometric embedding. Then the map $g\!\!:Y\to X\times Y, y\mapsto (y,y)$ induces a locally isometric embedding of the scaled metric space $\sqrt{2}Y$ into $X\times Y$ and a group monomorphism 
$g_*\!:~\pi_1(Y)\to \pi_1(X\times Y), h\mapsto (h,h)$ with image $\Delta_{\pi_1(Y)} \leq \pi_1(X) \times \pi_1(Y)$.
\end{enumerate}
\end{lem}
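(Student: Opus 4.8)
My plan is to treat the three parts separately; each reduces to a direct computation with the $\ell^2$ product metric together with the standard description of the fundamental group of a product. For part (1) I would pass to universal covers: non-positive curvature is a local condition, the product of the two universal coverings $\tilde X\times\tilde Y\to X\times Y$ is itself a universal covering since its total space is simply connected, and a metric (Euclidean) product of $\CAT(0)$ spaces is $\CAT(0)$ (see \cite{MR1744486}); as $\tilde X$ and $\tilde Y$ are $\CAT(0)$, this settles (1). For part (2), the product metric gives $d\bigl((x_1,y_0),(x_2,y_0)\bigr)=d_1(x_1,x_2)$, so $f$ is a global isometric embedding onto $X\times\{y_0\}$, hence a locally isometric embedding, and its image is therefore locally convex; and under the canonical isomorphism $\pi_1(X\times Y)\cong\pi_1(X)\times\pi_1(Y)$ (with basepoint $(x_0,y_0)$) the map $f_*$ becomes $[\gamma]\mapsto([\gamma],1)$, a monomorphism with image $\pi_1(X)\times 1$.

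For part (3), write $\iota\colon Y\hookrightarrow X$ for the given locally isometric embedding, so that $g(y)=(\iota(y),y)$. Given $y\in Y$, I would pick a neighborhood $N$ of $y$ on which $\iota$ restricts to an isometric embedding; then $d_1(\iota(y_1),\iota(y_2))=d_2(y_1,y_2)$ for all $y_1,y_2\in N$, so
$$d\bigl(g(y_1),g(y_2)\bigr)=\sqrt{d_2(y_1,y_2)^2+d_2(y_1,y_2)^2}=\sqrt{2}\,d_2(y_1,y_2),$$
which says exactly that $g|_N$ is an isometric embedding of $N$ rescaled by $\sqrt{2}$ into $X\times Y$; hence $g$ induces a locally isometric embedding $\sqrt{2}\,Y\to X\times Y$. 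On fundamental groups, under $\pi_1(X\times Y)\cong\pi_1(X)\times\pi_1(Y)$ the homomorphism $g_*$ becomes $h\mapsto(\iota_*h,h)$, which is injective because composing with the projection to the second factor recovers the identity of $\pi_1(Y)$; its image is $\{(\iota_*h,h):h\in\pi_1(Y)\}$. Since $\iota$ is a locally convex inclusion into a non-positively curved space, $\iota_*$ is injective (in $\tilde X$ the preimage of the locally convex set $Y$ is a disjoint union of convex, hence simply connected, pieces), so this image is precisely the diagonal $\Delta_{\pi_1(Y)}\leq\pi_1(X)\times\pi_1(Y)$.

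The main obstacle here is essentially bookkeeping rather than substance. The two points that genuinely need care are: first, that $d_2$ agrees with the restriction of $d_1$ on sufficiently small neighborhoods in $Y$ --- this is precisely what ``locally isometric embedding'' provides, and it is what makes the $\sqrt{2}$ rescaling uniform on a neighborhood rather than merely infinitesimal; and second, the $\pi_1$-injectivity of $\iota_*$, for which I would invoke the standard fact that a locally convex subspace of a non-positively curved space is $\pi_1$-injective.
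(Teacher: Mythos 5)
Your proof is correct and follows essentially the same approach as the paper: part (1) reduces to the standard fact that a metric product of $\CAT(0)$ spaces is $\CAT(0)$, part (2) is the direct computation from the definition of the product metric, and part (3) rests on the Pythagorean computation yielding the $\sqrt{2}$ scaling together with the standard algebra of $\pi_1$ of a product. One small point in your favor: the paper's displayed computation in part (3) is written as if the ambient distance $d_X(y_1,y_2)$ and the intrinsic distance $d_Y(y_1,y_2)$ always coincide, whereas you correctly restrict to a neighborhood on which the locally isometric embedding $Y\hookrightarrow X$ is actually isometric; this is the right level of care, since the two metrics need not agree globally.
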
 

\begin{proof}
Statement (1) can be seen from Example (3) on page 167 in \cite{MR1744486}. The proof of statement (2) is straightforward from the definition of product metric. The algebra portion of statement (3) is standard. We show that $g$ induces a locally isometric embedding of the scaled metric space $\sqrt{2}Y$ into $X\times Y$ as follows:
$$   d\bigl(g(y_1),g(y_2)\bigr)\; =\; d\bigl((y_1,y_1),(y_2,y_2)\bigr)\; =\; \Bigl( \bigl( d(y_1,y_2) \bigr)^2+\bigl(d(y_1,y_2) \bigr)^2\Bigr)^{1/2} \; =\; \sqrt{2}d(y_1,y_2)\,.
\qedhere$$
\end{proof}




\begin{prop}[\cite{MR1744486}, II.11.6~(2)]
\label{BH2}
Let $X_1$ and $X_2$ be two non-positively curved metric spaces and let $A_1 \subset X_1$ and $A_2 \subset X_2$ be closed subspaces that are locally convex and complete. If $j\!\!: A_1 \to A_2$ is a bijective local isometry, then the quotient of the disjoint union $X=X_1 \sqcup X_2$ by
the equivalence relation generated by $[a_1 \sim j(a_1), \forall a_1 \in A_1 ]$ is also a non-positively curved metric space.
\end{prop}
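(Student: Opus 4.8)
The plan is to reduce Proposition~\ref{BH2} to Reshetnyak's gluing theorem for $\CAT(0)$ spaces by working locally. Being non-positively curved is a local property, so it suffices to show that every point of the quotient space $X$ has a $\CAT(0)$ neighborhood. Let $\hat A$ denote the common image of $A_1$ and $A_2$ in $X$. Since $A_1$ is closed in $X_1$ and $A_2$ is closed in $X_2$, the set $X \setminus \hat A$ is open and decomposes as the disjoint union of $X_1 \setminus A_1$ and $X_2 \setminus A_2$; hence a point of $X$ not on $\hat A$ has a neighborhood isometric to an open subset of $X_1$ or of $X_2$, which is $\CAT(0)$ because $X_1$ and $X_2$ are non-positively curved. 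So the only points that require work are those of $\hat A$.

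Fix $\hat a \in \hat A$ with preimages $a_1 \in A_1$ and $a_2 = j(a_1) \in A_2$. Using that $X_i$ is non-positively curved and that $A_i$ is locally convex, I would first choose a $\CAT(0)$ neighborhood $N_i$ of $a_i$ in $X_i$ such that $N_i \cap A_i$ is convex in $N_i$, and then let $B_i$ be a small closed metric ball around $a_i$ inside $N_i$. Since closed balls in $\CAT(0)$ spaces are convex and $\CAT(0)$, the ball $B_i$ is a $\CAT(0)$ space, and $C_i := B_i \cap A_i$ is convex in $B_i$ and complete (being closed in the complete space $A_i$). Shrinking the radius if necessary so that $C_1$ lies inside a neighborhood on which the local isometry $j$ is an isometric embedding, and using that $j$ is a bijection $A_1 \to A_2$, we may arrange that $j$ restricts to an isometry $C_1 \to C_2$. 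The image $\hat B$ of $B_1 \sqcup B_2$ in $X$ is then a neighborhood of $\hat a$, and the crucial claim is that, for the radius chosen small enough, the path metric that $\hat B$ inherits from $X$ coincides with the metric of the abstract gluing $B_1 \cup_j B_2$; equivalently, any geodesic of $\hat B$ joining a point of $B_1$ to a point of $B_2$ must pass through $C_1 = C_2$, and each of its two halves is a geodesic of the corresponding ball terminating on the convex subspace $C_i$.

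Granting this metric identification, $\hat B$ is isometric to the gluing of the two $\CAT(0)$ spaces $B_1$ and $B_2$ along the complete convex subspaces $C_1$ and $C_2$ by the isometry $j$, so Reshetnyak's gluing theorem for $\CAT(0)$ spaces (\cite{MR1744486}, II.11.1) gives that $\hat B$ is $\CAT(0)$. As every point of $X$ then has a $\CAT(0)$ neighborhood, $X$ is non-positively curved, which proves Proposition~\ref{BH2}. The main obstacle is exactly the metric identification of $\hat B$ with $B_1 \cup_j B_2$: one must rule out unexpected shortcuts near $\hat A$ and show that geodesics crossing from one side to the other necessarily meet the convex subspace, which is where the completeness and local convexity of $A_1$ and $A_2$ (and the fact that $j$ is a local \emph{isometry}, not merely a homeomorphism) enter in an essential way.
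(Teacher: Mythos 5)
The paper does not prove this proposition; it is quoted directly from Bridson--Haefliger II.11.6(2), so there is no in-paper argument to compare against. Your proposed strategy --- observe that non-positive curvature is a local property, handle points off the glued subspace trivially, and around a point of the glued subspace produce a small $\CAT(0)$ neighborhood by applying Reshetnyak's gluing theorem (II.11.1) to a pair of metric balls glued along their convex intersections with $A_1$ and $A_2$ --- is exactly the route taken in the cited source. You correctly isolate the one nontrivial step, namely showing that for a small enough ball the metric that $\hat B$ inherits from the quotient $X$ actually coincides with the abstract gluing metric of $B_1$ and $B_2$ along $C_1 \cong C_2$, i.e.\ that no shortcuts appear and that any geodesic of $X$ joining the two sides near $\hat a$ passes through the glued subspace and decomposes into two geodesic pieces meeting $C_i$. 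You flag this but do not prove it, and it is precisely where the substance of the Bridson--Haefliger argument lies: they establish it through their analysis of quotient (pseudo)metrics for spaces glued along subspaces and the fact that geodesics issuing from a point into a complete convex subspace project onto it (see the material around I.5.24 and the preamble to II.11.1 in their book). This is a genuine gap in your write-up as it stands, but the plan is the correct one and, once that metric-identification lemma is imported or proved, the argument closes. One small additional point to tighten when you do so: you should justify that the radii of $B_1$ and $B_2$ can be chosen so that $j(B_1 \cap A_1) = B_2 \cap A_2$, which uses that $j$ is a \emph{base-point-preserving} local isometry on a neighborhood of $a_1$ and not merely a bijection of the ambient subspaces.
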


\section{The group embedding result}

The main result in this section is Proposition~\ref{prop:embed0}, which states that the fundamental group of a particular graph of groups based on a segment of length $(2n+1)$ (so it is an iterated amalgamation of $(2n+2)$ vertex groups) embeds into the fundamental group of a second such graph of groups based on an isomorphic graph. This proposition is used in Section~6 to embed finitely presented groups with superexponential Dehn functions into $\CAT(0)$ groups. 

Proposition~\ref{prop:embed0} is proved using a criterion of Bass (Proposition~\ref{basslemma}) for when a morphism between graphs of groups gives rise to an inclusion between their fundamental groups. In order to apply this criterion we need the preliminary results in Lemma~\ref{lem:bass1} and Lemma~\ref{lem:bass2}. 

The underlying geometry of the construction in Section~5 requires that the vertex groups in the lower graph of groups in Figure~\ref{fig:gog0} are expressed as direct products of the fundamental groups of graphs of groups related to certain subgraphs of the first graph of groups. Therefore, we start this section with a lemma which will be used to establish the inclusions between the corresponding vertex groups in the graphs of groups in Proposition~\ref{prop:embed0}.

\begin{lem}[embeddings of vertex groups]\label{lem:embed1}
Let $(A\rtimes B)$ be a semidirect product and $\theta: B \to C$ be a monomorphism which identifies $B$ with the subgroup $\theta(B)$ of $C$. Define $H$ to be the amalgam 
$$
H \; = \; (A \rtimes B) \ast_{(B \equiv\theta(B))} C. 
$$
Then the map 
$$
\varphi: (A \rtimes B) \, \to \, H \times C\; : ab \mapsto (a,1)(b, \theta(b)) 
$$
defines an embedding of $(A\rtimes B)$ into $H \times C$. 
\end{lem}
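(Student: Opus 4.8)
## Proof proposal

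The plan is to show $\varphi$ is a well-defined homomorphism and then that it is injective. For well-definedness, note that every element of $A \rtimes B$ is written uniquely as $ab$ with $a \in A$, $b \in B$, so the map on underlying sets is unambiguous; the content is that it respects the product. Using the semidirect product relation $ba = (b \cdot a) b$ (where $b \cdot a$ denotes the action of $b$ on $a$, which still lies in $A$), I would compute $\varphi(a_1 b_1)\varphi(a_2 b_2) = (a_1, 1)(b_1, \theta(b_1))(a_2, 1)(b_2, \theta(b_2))$ in $H \times C$. In $H$ the element $b_1 a_2$ equals $(b_1 \cdot a_2) b_1$ because the action of $B$ on $A$ inside $H$ is the same as inside $A \rtimes B$ (the amalgamation only adds relations identifying $B$ with $\theta(B) \leq C$, which does not alter the $A \rtimes B$ subgroup). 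In the $C$-coordinate, $\theta(b_1)$ commutes past the trivial entry. Collecting terms gives $\bigl( a_1 (b_1 \cdot a_2),\, 1 \bigr)\bigl( b_1 b_2,\, \theta(b_1)\theta(b_2) \bigr) = \varphi\bigl( a_1 (b_1 \cdot a_2) b_1 b_2 \bigr) = \varphi\bigl( (a_1 b_1)(a_2 b_2) \bigr)$, as required.

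For injectivity, suppose $\varphi(ab) = (1,1)$ in $H \times C$. The $C$-coordinate gives $\theta(b) = 1$ in $C$, and since $\theta$ is a monomorphism, $b = 1$ in $B$. Then the $H$-coordinate reduces to $a = 1$ in $H$. So it remains to show that the natural map $A \to H$ (the restriction to $A$ of the inclusion $A \rtimes B \hookrightarrow H$) is injective — equivalently, that the inclusion $A \rtimes B \hookrightarrow H$ of the amalgam's first vertex group is injective on $A$. This is where I expect the only real subtlety to lie, but it is immediate from the standard theory of amalgamated free products: in $H = (A \rtimes B) \ast_{(B \equiv \theta(B))} C$, the two vertex groups $A \rtimes B$ and $C$ inject into $H$. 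Hence $A \leq A \rtimes B$ injects into $H$, so $a = 1$ in $A$, and therefore $ab = 1$ in $A \rtimes B$. This proves $\ker \varphi$ is trivial.

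The main obstacle, such as it is, is bookkeeping the semidirect-product action consistently across the three groups $A \rtimes B$, $H$, and $H \times C$: one must be careful that "$b$ acts on $a$" means the same thing in $A \rtimes B$ and in $H$, which holds precisely because $A \rtimes B$ is a vertex group of the amalgam and therefore embeds with its own multiplication. Once that is pinned down, both well-definedness and injectivity are short formal computations, and the injectivity of the vertex groups into an amalgam is a black-box fact. I would present the well-definedness computation as a single displayed chain of equalities and then dispatch injectivity in two lines as above.
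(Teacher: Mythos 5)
Your proof is correct and follows essentially the same strategy as the paper's: verify the homomorphism property via the normal form in $A \rtimes B$ (your $a_1(b_1 \cdot a_2)b_1b_2$ is the paper's $(a_1b_1a_2b_1^{-1})(b_1b_2)$), and then establish injectivity using the fact that the vertex group $A \rtimes B$ (hence $A$) embeds into the amalgam $H$. The only cosmetic difference is that you kill $b$ first via the $C$-coordinate and then kill $a$ via the $H$-coordinate, whereas the paper notes more directly that $\pi_1 \circ \varphi$ is the inclusion $A \rtimes B \hookrightarrow H$, which is injective; your version uses one unnecessary extra step but is equally valid.
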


\begin{proof}
Given $g \in (A\rtimes B)$, by the normal form in semidirect products there exists unique $a\in A$ and $b \in B$ such that 
$g=ab$. This ensures that $\varphi$ is well-defined as a set map on all of $(A\rtimes B)$ by
$$
\varphi(g) \; =\; \varphi(ab) \; = \; (a,1)(b,\theta(b)).
$$

Next, we show that $\varphi$ is a homomorphism. Given $g_1, g_2 \in (A \rtimes B)$, there exist unique elements $a_1, a_2 \in A$ and $b_1, b_2 \in B$ such that $g_i = a_ib_i$ for $i=1,2$. Therefore, 
\begin{align*}
    \varphi(g_1) \varphi(g_2) &= \varphi(a_1b_1) \varphi(a_2b_2)\\
    &= (a_1b_1, \theta(b_1))(a_2b_2, \theta(b_2))\\
    &= (a_1b_1a_2b_2, \theta(b_1)\theta(b_2))\\
    &= ((a_1b_1a_2b_1^{-1})b_1b_2, \theta(b_1b_2))\\
    &=\varphi((a_1b_1a_2b_1^{-1})(b_1b_2))\\
    &= \varphi(g_1g_2)
\end{align*}
and so $\varphi$ is a group homomorphism. 
The last equality holds because $(a_1b_1a_2b_1^{-1})(b_1b_2)$ is the unique normal form representative of the element $g_1g_2$ in $(A \rtimes B)$.

Let $\pi_1: H \times C \to H: (h,c) \mapsto h$ be the homomorphism which projects onto the first factor. Given $ab \in (A \rtimes B)$ we have $(\pi_1 \circ \varphi)(ab) \; =\; \pi_1(\varphi(ab)) \; =\; 
\pi_1(ab, \theta(b)) \; = \; ab \in H. $
Thus, $\pi_1 \circ \varphi$ is the inclusion map of $(A \rtimes B)$ into $H = (A \rtimes B) \ast_B C$. Therefore, $\varphi$ is injective. 

Note that 
$$
\varphi(A) \; =\;  \{(a, 1) \, |\, a \in A\}
$$
is the subgroup $A \times 1$ of $H \times C$. Also, 
$$
\varphi(B) \; =\; \{(b, \theta(b)) \, |\, b \in B\} \; =\; \{(\theta(b), \theta(b)) \, |\, b \in B\}
$$
where the last equality is true because the relation $\theta(b) = b$ holds in the amalgam $H$ which is the first factor of the direct product. 
Therefore, $\varphi(B)$ is the diagonal subgroup 
$$
\Delta_{\theta(B)} \leq C \times C \leq H \times C. 
$$
In summary, $\varphi$ is a monomorphism which takes $(A \rtimes B)$ isomorphically to the subgroup generated by $A\times 1$ and $\Delta_{\theta(B)}$, 
$$
\langle A \times 1, \, \Delta_{\theta(B)}\rangle \; \subseteq \; H \times C.
\qedhere$$

\end{proof}

The following result tells when a morphism of graphs of groups induces an monomorphism of their fundamental groups. It is Lemma 5.1 from \cite{MR3705143} and is a reformulation of a basic result of \cite{MR1239551}.

\begin{prop}[injectivity for graphs of groups] \label{basslemma}
Suppose $\mathcal{A}$ and $\mathcal{B}$ are graphs of groups such that
the underlying graph $\Gamma_{\mathcal{A}}$ of $\mathcal{A}$ is a
subgraph of the underlying graph of $\mathcal{B}$. Let $A$ and $B$ be
their respective fundamental groups. Suppose that there are injective
homomorphisms $\psi_e\! : A_e \to B_e$ and $\psi_v\! : A_v \to B_v$
between edge and vertex groups, for all edges $e$ and vertices $v$ in
$\Gamma_{\mathcal{A}}$, which are compatible with the edge-inclusion
maps. That is, whenever $e$ has initial vertex $v$, the diagram 
$$
 \begin{tikzcd}[row sep=1.8em,column sep=3em]
  A_e \arrow[r, "i_e"] \arrow[d,"\psi_e"] & A_v \arrow[d, "\psi_v"] \\
 B_e \arrow[r, "j_e"] &                 B_v \\
  \end{tikzcd}
$$
commutes. 

If $j_e(\psi_e(A_e)) = \psi_{v} (A_{v}) \cap j_e(B_e)$ whenever $e$ has initial
vertex $v$, then the induced homomorphism $\psi \!: A \to B$ is
injective. 
\end{prop}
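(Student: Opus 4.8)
The plan is to run the argument through Bass--Serre theory. Let $T$ be the Bass--Serre tree of $\mathcal{A}$ and $S$ that of $\mathcal{B}$, so that $A$ acts on $T$ with quotient graph $\Gamma_{\mathcal{A}}$ and with vertex (resp.\ edge) stabilizers the conjugates of the groups $A_v$ (resp.\ $A_e$), and likewise $B$ acts on $S$ with quotient $\Gamma_{\mathcal{B}}$. The data in the hypothesis --- the subgraph inclusion $\Gamma_{\mathcal{A}}\subseteq\Gamma_{\mathcal{B}}$ together with the injections $\psi_v,\psi_e$ fitting into the commuting squares --- is exactly a morphism of graphs of groups in the sense of \cite{MR1239551} in which all the ``twisting'' elements attached to edges may be taken trivial (this is precisely what \emph{strict} commutativity of the square gives us). Such a morphism induces a simplicial map $\Phi\colon T\to S$ sending edges to edges (no collapsing, since $\Gamma_{\mathcal{A}}\hookrightarrow\Gamma_{\mathcal{B}}$ is an honest subgraph inclusion) which is equivariant for $\psi$, i.e.\ $\Phi(a\cdot x)=\psi(a)\cdot\Phi(x)$ for $a\in A$, $x\in T$. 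The whole proof then reduces to two claims: (i) $\Phi$ is injective; (ii) injectivity of $\Phi$ forces injectivity of $\psi$.

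For (i), the key point is that $\Phi$ does not fold. Fix a base lift $\tilde v\in T$ of a vertex $v\in\Gamma_{\mathcal{A}}$, with $\Stab(\tilde v)=A_v$, and let $\hat v=\Phi(\tilde v)$, a base lift of $v$ now regarded as a vertex of $\Gamma_{\mathcal{B}}$, with $\Stab(\hat v)=B_v$. For each edge $e$ of $\Gamma_{\mathcal{A}}$ with $\iota(e)=v$, the edges of $T$ issuing from $\tilde v$ and lying over $e$ are naturally indexed by the coset space $A_v/i_e(A_e)$, the edges of $S$ issuing from $\hat v$ over $e$ are indexed by $B_v/j_e(B_e)$, and $\Phi$ sends the edge labelled $a\,i_e(A_e)$ to the one labelled $\psi_v(a)\,j_e(B_e)$. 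This is well defined because $\psi_v(i_e(A_e))=j_e(\psi_e(A_e))\subseteq j_e(B_e)$ by the commuting square, and it is injective on these edges precisely when $\psi_v(a)\in j_e(B_e)$ implies $a\in i_e(A_e)$; via the commuting square this is exactly the informative inclusion $\psi_v(A_v)\cap j_e(B_e)\subseteq j_e(\psi_e(A_e))$ of the standing hypothesis (the reverse inclusion of that equality is automatic from the square). Edges issuing from $\tilde v$ over distinct $e,e'$ are sent to edges over distinct edges of $\Gamma_{\mathcal{B}}$ (the inclusion $\Gamma_{\mathcal{A}}\hookrightarrow\Gamma_{\mathcal{B}}$ is injective on edges), hence to distinct edges of $S$. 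So $\Phi$ is injective on the star of $\tilde v$, and by $\psi$-equivariance it is injective on the star of every vertex of $T$.

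A non-degenerate simplicial map between trees that is injective on every vertex star is globally injective: if $\Phi(x)=\Phi(y)$ with $x\neq y$, the image under $\Phi$ of the geodesic $[x,y]$ is an edge path in $S$ which, by local injectivity, does not backtrack; a reduced closed edge path of positive length in a tree cannot exist. Hence $\Phi$ is injective, proving (i). For (ii), suppose $a\in\ker\psi$. Equivariance gives $\Phi(a\cdot x)=\psi(a)\cdot\Phi(x)=\Phi(x)$ for every $x\in T$, so injectivity of $\Phi$ forces $a$ to fix $T$ pointwise; in particular $a$ fixes a base vertex $\tilde v_0$, hence $a\in\Stab(\tilde v_0)=A_{v_0}$. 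But $\psi$ restricts to $\psi_{v_0}$ on the vertex group $A_{v_0}\le A$, so $\psi_{v_0}(a)=\psi(a)=1$, and injectivity of $\psi_{v_0}$ yields $a=1$. This is the whole argument.

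The step I expect to be the main obstacle is (i), and within it the bookkeeping that translates ``$\Phi$ does not fold in the direction $e$'' first into the coset-level statement that $A_v/i_e(A_e)\to B_v/j_e(B_e)$ is injective and then into the hypothesis $j_e(\psi_e(A_e))=\psi_v(A_v)\cap j_e(B_e)$; one has to keep the edge-inclusion maps $i_e,j_e$ and the commuting squares straight, and notice that only one of the two inclusions in that equality carries information. Everything else --- constructing $\Phi$ and its equivariance from the morphism data, the fact that local injectivity implies global injectivity for maps of trees, and that the stabilizer of a base vertex is the corresponding vertex group --- is routine Bass--Serre bookkeeping. For the specific graphs of groups used later in the paper, which are based on segments, one could instead induct on the number of vertices using the normal-form (Britton-type) description of an iterated amalgam, but the tree argument above handles the general statement uniformly.
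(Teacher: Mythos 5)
The paper does not prove this proposition; it cites it as Lemma~5.1 of \cite{MR3705143}, which is in turn a reformulation of Bass's covering theory for graphs of groups \cite{MR1239551}. Your Bass--Serre argument is correct and is precisely the argument underlying the cited result: the commuting squares give an equivariant simplicial map $\Phi\colon T\to S$ of Bass--Serre trees, the Bass intersection condition $j_e(\psi_e(A_e))=\psi_v(A_v)\cap j_e(B_e)$ is exactly what makes the induced coset maps $A_v/i_e(A_e)\to B_v/j_e(B_e)$ injective (i.e.\ $\Phi$ is star-injective, an immersion of trees), an immersion of trees is an embedding, and an element of $\ker\psi$ then acts trivially on $T$, lands in a vertex group $A_{v_0}$, and is killed by the injective $\psi_{v_0}$. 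You have also correctly identified which inclusion in the hypothesis is automatic from the commuting square and which one carries the injectivity content.
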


The previous result motivates the following definition. 

\begin{defn}[Bass conditions]
Let $A,B, C, D$ be groups and 
$$
 \begin{tikzcd}
   A \arrow{d}[swap]{\alpha} \arrow{r}{i} & B \arrow{d}{\beta}\\
 C \arrow{r}{j} & D\\
  \end{tikzcd}
$$
be a diagram of monomorphisms. This diagram is said to \emph{satisfy the Bass conditions} if 
\begin{enumerate}
    \item the diagram is commutative, and 
    \item $j(\alpha(A)) \; =\; j(C) \cap \beta(B)$.
\end{enumerate}
We call the second condition the \emph{Bass intersection condition}.
\end{defn}

Lemma~\ref{lem:bass1} and Lemma~\ref{lem:bass2} below describe two situations in which the Bass conditions hold. These will be used repeatedly in the proof of the main embedding result in Proposition~\ref{prop:embed0}.

\begin{lem}[Bass conditions - factor embedding]\label{lem:bass1}
Let $(A\rtimes B)$ be a semidirect product and $\theta: B \to C$ be a monomorphism which identifies $B$ with the subgroup $\theta(B)$ of $C$. Define $H$ to be the amalgam 
$$
H \; = \; (A \rtimes B) \ast_{(B \equiv \theta(B))} C 
$$
and let $\varphi: (A \rtimes B) \to H \times C$ be the monomorphism of Lemma~\ref{lem:embed1}. 

Then the diagram of monomorphisms and inclusions
$$
 \begin{tikzcd}
  (A \rtimes B)  \arrow{d}[swap]{\varphi} & A \arrow{d} \arrow{l}{} \\
 H \times C  & H \arrow{l}{H\times 1} \\
  \end{tikzcd}
$$
satisfies the Bass conditions. 
\end{lem}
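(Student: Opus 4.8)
The plan is to verify the two Bass conditions directly, using the explicit description of the map $\varphi$ and its image that was established in Lemma~\ref{lem:embed1}. First I would pin down the four arrows in the square: the bottom map $H \to H \times C$ is $h \mapsto (h,1)$; the top map $A \to (A\rtimes B)$ is the standard inclusion $a \mapsto a$; the left map is $\varphi$; and the right map $A \to H$ is the composite $A \hookrightarrow (A\rtimes B) \hookrightarrow H$, i.e. again $a\mapsto a$ viewed inside the amalgam. All four are monomorphisms (for the last two, because $A \hookrightarrow A\rtimes B$ is a retract and $(A\rtimes B)\hookrightarrow H$ is an edge inclusion in an amalgam).

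For commutativity (condition (1)), I would chase $a \in A$ around the square. Going down-then-right: $\varphi(a) = (a,1) \in H \times C$, where I am using that the normal form of $a$ in $A\rtimes B$ has trivial $B$-part, so $\theta(b)=1$. Going right-then-down: the image of $a$ in $H$ is $a$, and then $(a,1)$ under $H \times 1$. These agree, so the square commutes.

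For the Bass intersection condition (2), I need $j(\alpha(A)) = j(H) \cap \beta(A\rtimes B)$, where $j\colon H \to H\times C$ is $h\mapsto(h,1)$, $\alpha\colon A\to H$ is the inclusion, and $\beta=\varphi$. The left-hand side is $\{(a,1) : a \in A\} = A \times 1$, by the computation $\varphi(A) = A\times 1$ already recorded in Lemma~\ref{lem:embed1}. For the right-hand side, $j(H) = H \times 1$, and $\varphi(A\rtimes B) = \langle A\times 1,\ \Delta_{\theta(B)}\rangle$ by Lemma~\ref{lem:embed1}. So I must show
$$
(H \times 1) \cap \langle A\times 1,\ \Delta_{\theta(B)}\rangle \; =\; A \times 1.
$$
The inclusion $\supseteq$ is immediate. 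For $\supseteq^{-1}$, take $\varphi(ab) = (ab, \theta(b))$ in $H\times C$ and suppose it lies in $H\times 1$; then $\theta(b) = 1$ in $C$, so $b=1$ since $\theta$ is injective, hence $\varphi(ab) = (a,1) \in A\times 1$. This is the crux of the argument, and it is short: the intersection condition reduces, via the normal-form description of elements of $\varphi(A\rtimes B)$, to the injectivity of $\theta$ on the $C$-coordinate. I do not anticipate a genuine obstacle here; the only thing to be careful about is bookkeeping the identification $\theta(b) \equiv b$ inside $H$ (so that $\Delta_{\theta(B)}$ really is the diagonal of $C\times C$ sitting in $H \times C$) versus treating the $C$-coordinate of $H\times C$ as an honest copy of $C$ where $\theta$ is injective — exactly the point exploited in the last line.
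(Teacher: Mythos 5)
Your proof is correct and follows essentially the same route as the paper's: commutativity is checked by chasing $a \in A$ to $(a,1)$ both ways, and the Bass intersection condition is reduced to the injectivity of $\theta$ on the $C$-coordinate, exactly as in the paper. (A minor bookkeeping slip: the square as drawn has arrows going left and down, so the two composites are ``left-then-down'' and ``down-then-left,'' not ``down-then-right'' and ``right-then-down''; this doesn't affect the substance.)
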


\begin{proof}
Note that the left vertical arrow in the diagram is the inclusion map $A \to (A\rtimes B) \to (A\rtimes B) \ast_{(B \cong \theta(B))} C = H$. Therefore, $a \in A$ is mapped to $a \in H$ via the left arrow and this is mapped to $(a,1) \in H\times C$ via the lower arrow. On the other hand $a$ is mapped to $a = a.1 \in (A \rtimes B)$ by the top arrow and $\varphi$ maps this to $(a.1, \theta(1)) = (a,1) \in H \times C$, and so the diagram commutes. 

The image of $A$ in $H \times C$ is $A \times 1$ and from the previous paragraph we have that 
$$A \times 1 \; \subseteq \;(H \times 1) \cap \varphi(A \rtimes B).$$ 
To see the reverse inclusion, let $(g_1, g_2) \in (H \times 1) \cap \varphi(A \rtimes B)$. In particular, $(g_1,g_2) \in H \times 1$ and so $g_2 = 1$. Now $(g_1, 1) \in \varphi(A \rtimes B)$ implies that $(g_1, 1) = (ab, \theta(b))$ for some $ab\in (A \rtimes B)$. But then $\theta(b) = 1$ and so $b=1$ since $\theta$ is a monomorphism. Therefore, $(g_1, g_2) =  (a,1)\in A \times 1$ and so the diagram satisfies the Bass intersection condition. 
\end{proof}

\begin{lem}[Bass conditions - diagonal embedding] \label{lem:bass2}
Let $(A\rtimes B)$ be a semidirect product and $\theta: B \to C$ be a monomorphism which identifies $B$ with the subgroup $\theta(B)$ of $C$. Define $H$ to be the amalgam 
$$
H \; = \; (A \rtimes B) \ast_{(B \equiv \theta(B))} C 
$$
and let $\varphi: (A \rtimes B) \to H \times C$ be the monomorphism of Lemma~\ref{lem:embed1}. Let $\beta$ denote the inverse of the isomorphism $\theta: B \to \theta(B)$. The composition $\theta(B) \stackrel{\beta}{\to} B \hookrightarrow A \rtimes B$ is denoted by $\beta$ too.

Then the diagram of monomorphisms and inclusions 
$$
 \begin{tikzcd}
 \theta(B) \arrow{d}{} \arrow{r}{\beta} &   (A \rtimes B) \arrow{d}{\varphi}\\
 C \arrow{r}[swap]{\Delta_C} & H \times C\\
  \end{tikzcd}
$$
satisfies the Bass conditions. 
\end{lem}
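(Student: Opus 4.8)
The plan is to verify the two Bass conditions directly from the explicit description of the monomorphism $\varphi$ from Lemma~\ref{lem:embed1}, just as in the proof of Lemma~\ref{lem:bass1}. The key observations are recorded in that proof: $\varphi(A) = A \times 1$ and $\varphi(B) = \Delta_{\theta(B)} \leq C \times C$, where the latter equality uses the relation $\theta(b) = b$ holding in the amalgam $H$. Here the left vertical arrow is the inclusion $C \to H$ (where $C$ is the second vertex group of the amalgam $H$), the bottom arrow $\Delta_C$ is $c \mapsto (c,c)$, the top arrow $\beta$ is the composite $\theta(B) \xrightarrow{\beta} B \hookrightarrow A \rtimes B$, and the right arrow is $\varphi$.

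First I would check commutativity. An element $\theta(b) \in \theta(B) \leq C$ maps under $\beta$ to $b \in B \leq A \rtimes B$, and $\varphi(b) = (b, \theta(b))$. Going the other way, $\theta(b)$ maps into $H$ via the inclusion $C \hookrightarrow H$, and then $\Delta_C$ sends it to $(\theta(b), \theta(b)) \in H \times C$. Since $\theta(b) = b$ in $H$, these agree: $(b, \theta(b)) = (\theta(b), \theta(b))$ in $H \times C$. So the square commutes.

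Next I would verify the Bass intersection condition: $\Delta_C(\theta(B)) = \Delta_C(C) \cap \varphi(A \rtimes B)$ inside $H \times C$. The left side is $\{(\theta(b), \theta(b)) : b \in B\} = \{(b, \theta(b)) : b \in B\} = \varphi(B)$. For the inclusion $\subseteq$, note $\varphi(B) \subseteq \varphi(A\rtimes B)$ trivially, and $\varphi(B) = \{(\theta(b),\theta(b))\} \subseteq \Delta_C(C)$. For the reverse inclusion, take $(c,c) \in \Delta_C(C) \cap \varphi(A \rtimes B)$. Then $(c,c) = \varphi(ab) = (ab, \theta(b))$ for some unique $a \in A$, $b \in B$, so $c = \theta(b)$ in $C$ (the second coordinate) and $c = ab$ in $H$ (the first coordinate). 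Substituting, $ab = \theta(b) = b$ in $H$, hence $a = 1$ in $H$. Since $A$ injects into $H$ (as $A \leq A \rtimes B \hookrightarrow H$, and the inclusion $A \rtimes B \hookrightarrow H$ is injective because it is a factor of an amalgam), $a = 1$ in $A$, so $ab = b$ and $(c,c) = (b, \theta(b)) = \varphi(b) \in \varphi(B) = \Delta_C(\theta(B))$. This establishes the Bass intersection condition.

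The main obstacle is bookkeeping the several identifications in play simultaneously: the relation $\theta(b) = b$ that holds only after passing to the amalgam $H$, the injectivity of $A \rtimes B \hookrightarrow H$ needed to conclude $a = 1$, and keeping straight which copy of $C$ (the one inside $H$ versus the free-standing second factor) each coordinate lives in. None of these is deep; the computation is essentially the mirror image of Lemma~\ref{lem:bass1}, with the roles of the factor subgroup $A$ and the diagonally-embedded subgroup $B$ interchanged.
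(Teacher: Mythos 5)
Your proof is correct and follows essentially the same route as the paper's: verify commutativity via the relation $\theta(b)=b$ in $H$, and verify the Bass intersection condition by writing $(c,c)=\varphi(ab)=(ab,\theta(b))$, deducing $c=\theta(b)$ from the second coordinate and $a\theta(b)=\theta(b)$ (hence $a=1$) from the first. The explicit remark that $A$ injects into $H$ because $A\rtimes B$ is a vertex group of the amalgam is a small clarification the paper leaves implicit, but it is the same argument.
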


\begin{proof}
An element $c \in \theta(B)$ is mapped to $(c,c)$ via the composition of the right and lower maps. It is sent to $\varphi(\beta(c)) = \varphi(1.\beta(c)) = (1.\beta(c), \theta(\beta(c))) = (\beta(c), c)$ in $H\times C$ via the composition of the top and left maps. However, the relation $\theta(b) = b$ holds in the group $H$ and so $(\beta(c), c) = (\theta(\beta(c)), c) = (c,c)$ and the diagram commutes. 

The image of $\theta(B)$ in $H \times C$ is the subgroup $\Delta_{\theta(B)}$ and from the previous paragraph we have
$$
\Delta_{\theta(B)} \; \subseteq \; \varphi(A\rtimes B) \cap \Delta_C.
$$ 
To see the reverse inclusion, let $(g_1, g_2) \in \varphi(A\rtimes B) \cap \Delta_C$. In particular $(g_1,g_2) \in \Delta_C$ and so $g_2 = g_1$. Now $(g_1, g_1) \in \varphi(A \rtimes B)$ implies that $(g_1, g_1) = (ab, \theta(b))$ for some $ab \in (A \rtimes B)$. Noting that $b=\theta(b)$ in the first factor, this gives $(g_1, g_1) =(a\theta(b), \theta(b))$ which implies $a\theta(b) = \theta(b)$ and so $a=1$. Therefore, 
$(g_1, g_2) = (1.\theta(b), \theta(b)) = (\theta(b), \theta(b))$ is in the image $\Delta_{\theta(B)}$ of $\theta(B)$ in $H \times C$ and so the Bass intersection condition is verified. 
\end{proof}

Here is the main group embedding result of the paper. 

\begin{prop}[graph of groups embedding]\label{prop:embed0}
Suppose we are given \begin{enumerate}
    \item a diagram of groups and inclusions 
    $$
 \begin{tikzcd}
   A_0 \arrow{d}[swap]{\varepsilon_0} \arrow{r} & S \arrow{d}\\
 H_0 \arrow{r} & T\\
  \end{tikzcd}
$$
satisfying the Bass conditions, 
\item a sequence of semidirect products $(A_i \rtimes B_i)$ for $1 \leq i \leq n$, and 
\item isomorphisms $\theta_i: B_i \to A_{i-1}$ for $1 \leq i \leq n$.
\end{enumerate}
Define sequences of groups $H_i$ and $G_i$ for $1\leq i \leq n$ and $L_i$ for $0 \leq i \leq n$ inductively as follows:
\begin{enumerate}
    \item $H_0$ is the group in the diagram above which contains $A_0$ as a subgroup and $$H_i \; =\;  (A_i \rtimes B_i) \ast_{(B_i \equiv _{\theta_i} A_{i-1})} H_{i-1} \quad {\hbox{for $1\leq i \leq n$,}}$$ 
    \item $G_1 = (H_1 \times H_0) \ast_{(\Delta_{H_0} \equiv H_0)} T$ and $$G_i \; = \; (H_i \times H_{i-1}) \ast_{(\Delta_{H_{i-1}} \equiv H_{i-1} \times 1)} G_{i-1}\quad {\hbox{for $2 \leq i \leq n$, and}}$$
    \item $L_0 = S$ contains the subgroup $A_0$ and 
    $$L_i \; = \; (A_i \rtimes B_i) \ast_{(B_i \equiv _{\theta_i} A_{i-1})} L_{i-1}\quad {\hbox{for $1 \leq i \leq n$.}}$$ 
\end{enumerate}
Then the double of $L_n$ over $A_n$ embeds into the double of $G_n$ over $H_n$ where the group $H_n$ includes into $G_n$
 as $H_n \times 1$.
\end{prop}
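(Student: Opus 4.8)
The plan is to build the embedding of the double $\mathcal{D}(L_n) = L_n \ast_{A_n} L_n$ into the double $\mathcal{D}(G_n) = G_n \ast_{H_n} G_n$ by first producing a single embedding $L_n \hookrightarrow G_n$ carrying $A_n$ into $H_n$, and then checking that this restricted embedding $A_n \hookrightarrow H_n$ has the form needed for doubles to embed compatibly. For the latter, since $H_n$ includes into $G_n$ as $H_n \times 1$ (a retract, via projection onto the relevant factor), a standard argument shows that if $L_n \hookrightarrow G_n$ restricts on $A_n$ to the inclusion $A_n \hookrightarrow H_n$, then the two copies amalgamate: $L_n \ast_{A_n} L_n \hookrightarrow G_n \ast_{H_n} G_n$. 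So the real content is the single embedding.

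To construct $L_n \hookrightarrow G_n$ I would view both $L_n$ and $G_n$ as fundamental groups of graphs of groups over the same underlying graph, namely a segment with $n+1$ vertices $0, 1, \dots, n$ (vertex $i$ carrying $L_i$-data versus $G_i$-data in an appropriate sense) — more precisely, I would realize $L_n$ as the iterated amalgam with vertex groups $S = L_0$ and $(A_i \rtimes B_i)$ for $1 \le i \le n$, edge groups $A_{i-1} \cong B_i$, and $G_n$ as the iterated amalgam with vertex groups $T = \ast$-th piece and $(H_i \times H_{i-1})$, with the edge between vertices $i-1$ and $i$ carrying $H_{i-1}$ (included as $\Delta_{H_{i-1}}$ on the $(H_i \times H_{i-1})$ side and as $H_{i-1} \times 1$ on the $G_{i-1}$ side). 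Then I would apply Bass's criterion (Proposition~\ref{basslemma}): I need injective maps $\psi_v$ on vertex groups and $\psi_e$ on edge groups, compatible with the edge-inclusions, satisfying the Bass intersection condition at each vertex.

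The vertex-group maps are supplied by Lemma~\ref{lem:embed1}: at vertex $i$ (for $1 \le i \le n$) the map $\varphi_i : (A_i \rtimes B_i) \to H_i \times H_{i-1}$ is the embedding of that lemma, using $\theta_i : B_i \to A_{i-1} \le H_{i-1}$ and $H_i = (A_i \rtimes B_i) \ast H_{i-1}$; at vertex $0$ the map is $S \to T$ from the hypothesized Bass diagram. The edge-group maps are: on the edge between $0$ and $1$, the map $A_0 \to H_0$ (which is $\varepsilon_0$), and on the edge between $i-1$ and $i$ for $i \ge 2$, the identity $A_{i-1} \xrightarrow{=} A_{i-1}$ composed with the inclusion into $H_{i-1}$. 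The compatibility squares and the Bass intersection conditions at each vertex are then exactly the content of Lemmas~\ref{lem:bass1} and~\ref{lem:bass2}: at vertex $i$ the ``incoming'' edge (from $i-1$) uses the diagonal embedding picture of Lemma~\ref{lem:bass2} (the edge group $A_{i-1}$ sits inside $(A_i \rtimes B_i)$ as $\beta_i(B_i) \cong A_{i-1}$ and maps to $\Delta_{H_{i-1}}$), while the ``outgoing'' edge (toward $i+1$, carrying $A_i$) uses the factor embedding picture of Lemma~\ref{lem:bass1} (the edge group $A_i$ sits inside $(A_i \rtimes B_i)$ as the factor $A_i$ and maps to $H_i \times 1$). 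At vertex $0$ the single relevant square is the hypothesized Bass diagram itself.

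The main obstacle, and the step requiring the most care, is bookkeeping: making sure that the two roles played by each vertex group $(A_i \rtimes B_i)$ — as receiving an edge from the left via its ``$B_i \cong A_{i-1}$'' subgroup and sending an edge to the right via its ``$A_i$'' subgroup — are set up so that \emph{both} Bass intersection conditions hold simultaneously with the \emph{same} vertex map $\varphi_i$, and that the target graph of groups $G_n$ really does have the claimed edge and vertex groups (in particular that the amalgamations defining $G_i$ match the edge identifications $\Delta_{H_{i-1}} \equiv H_{i-1}\times 1$ used in the graph-of-groups picture). Once the two-sided structure at each vertex is laid out correctly, Lemmas~\ref{lem:bass1} and~\ref{lem:bass2} dispatch the hypotheses of Proposition~\ref{basslemma} vertex by vertex, yielding the injection $L_n \hookrightarrow G_n$; tracking images shows $A_n \subseteq L_n$ maps into $H_n \times 1 \subseteq G_n$, i.e. onto the copy of $H_n$ through which the double is formed, and the doubling argument of the first paragraph then finishes the proof.
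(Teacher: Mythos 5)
Your first paragraph contains a genuine gap. You propose to first build a single embedding $\iota\!:L_n \hookrightarrow G_n$ and then conclude that the doubles embed because $H_n \times 1$ is a ``retract.'' But the doubling step is itself an application of Bass's criterion to the length-one graph of groups underlying a double, and the intersection condition one must verify there is
$$
\iota(L_n) \cap (H_n \times 1) \;=\; \iota(A_n) \qquad\text{inside }G_n.
$$
This is a statement about the intersection of the \emph{entire} image $\iota(L_n)$ with the edge group of the ambient double, not a statement about a single vertex group. It does not follow from a retraction onto $H_n \times 1$: if $\iota(g) \in H_n\times 1$, applying the retraction just returns $\iota(g)$ and gives no information about whether $g$ lies in $A_n$. (Under the stated hypotheses it is not even guaranteed that such a retraction of $G_n$ onto $H_n \times 1$ exists -- one would need $H_0$ to be a retract of $T$, which is true in the applications but is not part of the abstract setup.) This is precisely the type of pitfall the paper flags in the ``no short cuts'' remark after Proposition~\ref{prop:embed0}, where an analogous global intersection $(H_n\times 1) \cap \langle A_n\times 1,\,\Delta_{H_{n-1}}\rangle$ is computed to be a normal closure strictly larger than $A_n\times 1$.

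The paper sidesteps this entirely by never passing through a standalone embedding of $L_n$ into $G_n$. Instead it writes both doubles as fundamental groups of graphs of groups over the length-$(2n+1)$ segment and applies Bass's criterion (Proposition~\ref{basslemma}) directly to that morphism. Then the only intersection conditions that need checking are local ones at each vertex group, the hardest of which is $A_n \times 1 = (H_n\times 1) \cap \varphi_n(A_n\rtimes B_n)$ -- an intersection with the \emph{vertex group} $(A_n\rtimes B_n)$, not with all of $L_n$ -- and that is exactly what Lemma~\ref{lem:bass1} supplies. Your second and third paragraphs correctly reproduce this local bookkeeping for the single embedding $L_n\hookrightarrow G_n$; the fix to your overall argument is to apply the same local bookkeeping symmetrically on the doubled, $(2n{+}2)$-vertex graph rather than trying to factor the argument through a single embedding plus a doubling step.
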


\begin{proof} Note that each double can be expressed as the fundamental group of a graph of groups whose underlying graph is a segment of length $(2n+1)$. The top line segment of Figure~\ref{fig:gog0} is the underlying graph for the graph of groups description of $L_n \ast_{A_n} L_n$ and the bottom line segment underlies the graph of groups description of $G_n\ast_{H_n} G_n$. The vertex and edge groups are indicated in Figure~\ref{fig:gog0}.

\begin{figure}[ht]
    \centering
    \begin{tikzpicture}
    
     \tikzstyle{every node}=[font=\scriptsize]
     
    \foreach \a in {0, 2}
    {
    \filldraw[black] (0,\a) circle (2pt); 
    \filldraw[black] (2,\a) circle (2pt); 
    \filldraw[black] (6,\a) circle (2pt); 
    \filldraw[black] (8,\a) circle (2pt); 
    \filldraw[black] (12,\a) circle (2pt); 
    \filldraw[black] (14,\a) circle (2pt); 
    
    \draw[thick] (0,\a)--(2,\a)--(3.5,\a); 
    \draw[thick, dashed] (3.6,\a)--(4.4,\a);
    \draw[thick] (4.5,\a)--(6,\a)--(8,\a)--(9.5,\a); 
    \draw[thick, dashed] (9.6,\a)--(10.4,\a);
    \draw[thick] (10.5,\a)--(12,\a)--(14,\a);
    }

    \node[] at (0,2.3) {$S$};
    \node[] at (2,2.3) {$(A_1\rtimes B_1)$};
    \node[] at (6,2.3) {$(A_n\rtimes B_n)$};
    \node[] at (8,2.3) {$(A_n\rtimes B_n)$};
    \node[] at (12,2.3) {$(A_1\rtimes B_1)$};
    \node[] at (14,2.3) {$S$};

    \draw [decorate,decoration={brace,amplitude=5pt,raise=3ex}] (0,2.2) -- (6.5,2.2) node[midway,yshift=2.4em]{$L_n$};
   
    \draw [decorate,decoration={brace,amplitude=5pt,raise=3ex}] (7.5,2.2) -- (14,2.2) node[midway,yshift=2.4em]{$L_n$}; 
    
     \draw [decorate,decoration={brace,mirror,amplitude=5pt}] (-0.1,-0.7) -- (6.8,-0.7) node[midway,yshift=-1.2em]{$G_n$};
   
    \draw [decorate,decoration={brace,mirror, amplitude=5pt}] (7.2,-0.7) -- (14,-0.7) node[midway,yshift=-1.2em]{$G_n$}; 
    
    \node[] at (1, 1.8) {$A_0$};
    \node[] at (3, 1.8) {$A_1$};
    \node[] at (5, 1.8) {$A_{n-1}$};
    \node[] at (7, 1.8) {$A_n$};
    \node[] at (9, 1.8) {$A_{n-1}$};
    \node[] at (11, 1.8) {$A_1$};
    \node[] at (13, 1.8) {$A_0$};
    
    \node[] at (0,-0.3) {$T$};
    \node[] at (2,-0.3) {$(H_1 \times H_0)$};
    \node[] at (6,-0.3) {$(H_n \times H_{n-1})$};
    \node[] at (8,-0.3) {$(H_n \times H_{n-1})$};
    \node[] at (12,-0.3) {$(H_1 \times H_0)$};
    \node[] at (14,-0.3) {$T$};
            
    \node[] at (1,0.3) {$H_0$};
    \node[] at (3,0.3) {$H_1$};
    \node[] at (5,0.3) {$H_{n-1}$};
    \node[] at (7,0.3) {$H_n$};
    \node[] at (9,0.3) {$H_{n-1}$};
    \node[] at (11,0.3) {$H_1$};
    \node[] at (13,0.3) {$H_0$};
    
    \draw[->,  -stealth] (0,1.8)--(0,0.2);
    \draw[->,  -stealth] (2,1.8)--(2,0.2);
    \draw[->,  -stealth] (6,1.8)--(6,0.2);
    \draw[->,  -stealth] (8,1.8)--(8,0.2);
    \draw[->,  -stealth] (12,1.8)--(12,0.2);
    \draw[->,  -stealth] (14,1.8)--(14,0.2);
    
    \draw[->,  -stealth] (1,1.6)--(1,0.5); 
    \draw[->,  -stealth] (3,1.6)--(3,0.5);
    \draw[->,  -stealth] (5,1.6)--(5,0.5);
    \draw[->,  -stealth] (7,1.6)--(7,0.5);
    \draw[->,  -stealth] (9,1.6)--(9,0.5);
    \draw[->,  -stealth] (11,1.6)--(11,0.5);
    \draw[->,  -stealth] (13,1.6)--(13,0.5);
    
    \end{tikzpicture}
    \caption{The morphism between the graphs of groups descriptions of $L_n \ast_{A_n} L_n$ and $G_n\ast_{H_n}G_n$. }
    \label{fig:gog0}
\end{figure}
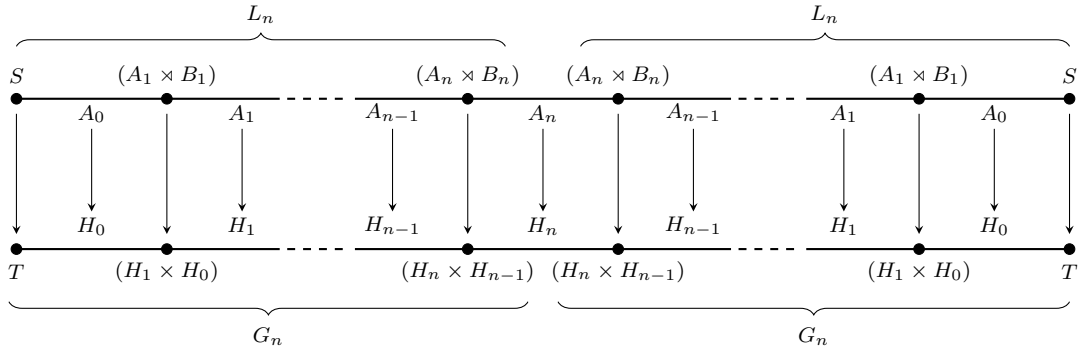

There is an isomorphism of the underlying graphs in Figure~\ref{fig:gog0} and there are inclusion maps $A_i \to H_i = (A_i \rtimes B_i) \ast_{(B_i \equiv_{\theta_i} A_{i-1})} H_{i-1}$ between the corresponding edge groups for $1 \leq i \leq n$. The inclusions $\varepsilon_0: A_0 \to H_0$ are given by hypothesis. Lemma~\ref{lem:embed1} establishes embeddings $\varphi_i: (A_i \rtimes B_i) \to H_i \times H_{i-1}$ between the corresponding vertex groups for $1 \leq i \leq n$. The inclusions $S \to T$ are given by hypothesis.

Next, we verify that the Bass conditions hold for each square of edge group-vertex group inclusions. There are $2(2n+1)$ such squares, two per segment of the diagram in Figure~\ref{fig:gog0}. 
To this end, we label the various maps that are used. The first set of maps are the vertex to edge inclusion maps in the top graph of groups in Figure~\ref{fig:gog0}. 
\begin{itemize}
    \item Let $\gamma_i: A_i \to (A_i \rtimes B_i): a \mapsto a$ denote the inclusion of $A_i$ into the semidirect product.
    \item Let $\beta_i: A_i \to B_{i+1}: a \mapsto \beta_i(a)$ denote the inverse of the isomorphism $\theta_{i+1}: B_{i+1} \to A_i$ given in the hypothesis. Note that $\beta_i$ composed with inclusion embeds $A_i$ into $(A_{i+1} \rtimes B_{i+1})$ as the $B_{i+1}$ subgroup; we denote this composition by $\beta_i$ too.
    \item There is an inclusion $A_0 \to S$ at each end which is left unlabelled. 
\end{itemize}
The next set of maps are the vertex to edge inclusion maps in the bottom graph of groups in Figure~\ref{fig:gog0}. 
\begin{itemize}
    \item Let $\delta_i: H_i \to H_{i+1}\times H_i: h \mapsto (h,h)$ denote the diagonal embedding of $H_i$.
    \item Let $\alpha_i: H_i \to H_i \times H_{i-1}: h \mapsto (h,1)$ denote the inclusion map to the first factor. 
    \item There is an inclusion $H_0 \to T$ at each end which is left unlabelled. 
\end{itemize}
The final set of maps are the vertical edge to edge and vertex to vertex monomorphisms between the two graphs of groups.
\begin{itemize}
    \item Let $\varepsilon_i: A_i \to H_i: a \mapsto a$ denote the inclusion of $A_i$ into $(A_i \rtimes B_i) \subseteq H_i$ for $1 \leq i \leq n$. The inclusion $\varepsilon_0 : A_0 \to H_0$ is given by hypothesis. 
    \item Lemma~\ref{lem:embed1} established that the maps $\varphi_i: (A_i \rtimes B_i) \to H_i \times H_{i-1}: ab \mapsto (ab, \theta_i(b))$ are embeddings for $1 \leq i \leq n$. 
    \item The inclusion $S \to T$ at each end is left unlabelled. 
\end{itemize}

\medskip

\noindent
\emph{Segment labelling.} The diagram in Figure~\ref{fig:gog0} contains $(2n+1)$ segments, corresponding to the $(2n+1)$ edges of each of the underlying graphs. 
We label these segments in order from left to right by the integers $0, 1, \ldots, 2n$. With this labelling, the reflection symmetry about the middle edge sends the $k$th segment to the $(2n-k)$th segment (sending the middle or $n$th segment to itself). 

\medskip

\noindent
\emph{Bass conditions for the middle segment.} 
Consider the following commutative diagram which represents the middle (or 
$n$th) segment of the diagram in Figure~\ref{fig:gog0}. 
$$
 \begin{tikzcd}
  (A_n \rtimes B_n) \arrow{d}{\varphi_n} & A_n \arrow{l}[swap]{\gamma_n} \arrow{d}{\varepsilon_n} \arrow{r}{\gamma_n} & (A_n \rtimes B_n) \arrow{d}{\varphi_n}\\
 (H_n \times H_{n-1}) & H_n \arrow{l}[swap]{\alpha_n} \arrow{r}{\alpha_n} & (H_n \times H_{n-1}) \\
  \end{tikzcd}
$$
By Lemma~\ref{lem:bass1} both the left and right half of this diagram satisfy the two Bass conditions (commuting diagram and intersection condition). To see this, set $A = A_n$, $(A\rtimes B) = (A_n \rtimes B_n)$, $H = H_n$, $C=H_{n-1}$, and $\varphi = \varphi_n$. 

\medskip

\noindent
\emph{Bass conditions for the $k$th segment for $1 \leq k \leq n-1$.} Consider the following commutative diagram which represents the 
$k$th segment (and, by reflection symmetry, the $(2n-k)$th segment) of the diagram in Figure~\ref{fig:gog0}. 
$$
 \begin{tikzcd}
  (A_k \rtimes B_k) \arrow{d}{\varphi_k} & A_k \arrow{l}[swap]{\gamma_k} \arrow{d}{\varepsilon_k} \arrow{r}{\beta_k} & (A_{k+1} \rtimes B_{k+1}) \arrow{d}{\varphi_{k+1}}\\
 (H_k \times H_{k-1}) & H_k \arrow{l}[swap]{\alpha_k} \arrow{r}{\delta_k} & (H_{k+1} \times H_k) \\
  \end{tikzcd}
$$

Lemma~\ref{lem:bass1} implies that the left square of the diagram satisfies the Bass conditions. To see this, take $A = A_k$, $(A \rtimes B) = (A_k \rtimes B_k)$, $H= H_k$, $C = H_{k-1}$, and $\varphi = \varphi_{k}$. 

Lemma~\ref{lem:bass2} implies that the right square of the diagram satisfies the Bass conditions. Here we need to identify $B$ with $B_{k+1}$ and then $\theta(B)$ with $A_k$, and take $(A\rtimes B) = (A_{k+1} \rtimes B_{k+1})$, $H = H_{k+1}$, $C=H_k$, and $\varphi = \varphi_{k+1}$. With this identification the inclusion $\theta(B) \to C$ becomes the inclusion $\varepsilon_k: A_k \to H_k$ and the monomorphism $\beta: \theta(B) \to (A \rtimes B)$ becomes the monomorphism $\beta_k: A_k \to (A_{k+1} \rtimes B_{k+1})$, and Lemma~\ref{lem:bass2} applies. 
\medskip

\noindent
\emph{Bass conditions for the terminal segments.} By symmetry, we only need to consider the $0$th segment. 
$$
 \begin{tikzcd}
  S \arrow{d}{} & A_0 \arrow{l}{} \arrow{d}{\varepsilon_0} \arrow{r}{\beta_0} & (A_1 \rtimes B_1) \arrow{d}{\varphi_1}\\
 T & H_0 \arrow{l}{} \arrow{r}{\delta_0} & (H_1 \times H_0) \\
  \end{tikzcd}
$$
The proof that the right square satisfies the Bass conditions follows from Lemma~\ref{lem:bass2} exactly as the proof for the right square in the $k$th segment above. The left square satisfies the Bass conditions by hypothesis. 

\medskip

\noindent
\emph{Conclusion.} 
Finally, Proposition~\ref{basslemma} implies that the fundamental group of the top graph of groups injects into the fundamental group of the bottom graph of groups; that is, the double 
$L_n\ast_{A_n}L_n$ embeds into the double $G_n\ast_{H_n} G_n$. 
\end{proof}

\begin{rem}[applications]\label{rem:apps}
There are two main applications of Proposition~\ref{prop:embed0}. For the first application, $A_0 = F_2$ is a subgroup of a snowflake group $S$ from \cite{MR3705143}. The snowflake group is a subgroup of a 6--dimensional $\CAT(0)$ group $T$ and $H_0 = (F_2 \rtimes \Z)$ is a convex subgroup of $T$. 
The fact that the diagram of group inclusions involving $A_0$, $H_0$, $S$, and $T$ satisfies the Bass conditions is established in Lemma~5.3 of \cite{MR3705143}. 

For the second application, $H_0 = (F_k \rtimes \Z)$ is a $\CAT(0)$ free-by-cyclic group and $A_0 = F_k$. The groups $S$ and $T$ are defined to be $S = (F_k \rtimes \Z)$ and $T = (F_k \rtimes \Z) \times \Z$. Note that $T$ is a $3$--dimensional $\CAT(0)$ group. The embedding $\varphi_0: S \to T$ has image the subgroup of $T$ generated by $(F_k \times 1) \cup \Delta_{\Z}$ and the embedding $H_0 \to T$ has image the subgroup $H_0 \times 1$. 
The fact that the diagram of group inclusions involving $A_0$, $H_0$, $S$, and $T$ satisfies the Bass conditions follows directly from Lemma~\ref{lem:bass1}. 
\end{rem}

\begin{rem}[normality]\label{rem:BS}
The proofs of Lemma~\ref{lem:bass1} and Proposition~\ref{prop:embed0} rely on the fact that each $A_i$ is normal in $(A_i \rtimes B_i)$. This appears in the use of normal forms for semidirect products in the proofs of the embedding results. It is interesting to see what goes wrong when one tries to emulate these embedding results using, for example, ascending HNN extensions instead of semidirect products.

Consider one of the simplest ascending HNN extensions, the Baumslag-Solitar group
$$
BS(1,2) \; =\; \langle a, t \, |\, tat^{-1} = a^2\rangle.
$$
There is an embedded copy of $BS(1,2)$ in the direct product $BS(1,2) \times \langle t \rangle$ which is analogous to the $\varphi(A \rtimes B)$ subgroup of Lemma~\ref{lem:embed1};
namely, the group generated by 
$$\langle (a,1),  (t,t)\rangle. 
$$
As in Lemma~\ref{lem:embed1}, this is the group generated by the subgroup $\langle a \rangle \times 1$ and the diagonal subgroup $\langle (t,t)\rangle$. 

However, this embedding does not give rise to an embedding of the double of $BS(1,2)$ over $\langle a\rangle$ into the double of 
$BS(1,2) \times \Z$ over $BS(1,2) \times 1$. 
This is because the group $BS(1,2) \times 1$ is normal in $BS(1,2) \times \Z$ and so the intersection 
$$
(BS(1,2) \times 1) \cap \langle (a,1), (t,t)\rangle
$$
is the normal closure of the cyclic subgroup $\langle (a,1)\rangle$ in $\langle (a,1), (t,t)\rangle = BS(1,2)$. This is isomorphic to $\Z[1/2]$ and is not finitely generated. For this example, the analogue of Lemma~\ref{lem:bass1} does not hold and the proof of Proposition~\ref{prop:embed0} breaks down. 

In this case, the double is isomorphic to $BS(1,2) \ast_{\Z[1/2]}BS(1,2)$ which is not finitely presented. It is a quotient of the double of $BS(1,2)$ over $\langle a \rangle$.

One may wish to explore this example by using new letters (say $u$ and $v$) for the copies of $\Z$. With this notation, the double of $BS(1,2) \times \Z$ over $BS(1,2) \times 1$ is the group 
$$\langle a, t, u, v \, |\, tat^{-1}= a^2, [u,a]=[u,t] =[v,a]=[v,t]=1\rangle . $$
The copies of $BS(1,2)$ in each $BS(1,2) \times \Z$ ``side'' are 
$$
\langle a, tu \rangle  \quad {\hbox{and}} \quad \langle a, tv \rangle . 
$$
The group 
$$
\langle a, tu, tv\rangle 
$$
is not isomorphic to the double of $BS(1,2)$ over $\langle a \rangle$. Instead of an embedding, one obtains a quotient map from 
$$
BS(1,2) \ast_{\langle a\rangle} BS(1,2) \; = \; \langle a, s_1, s_2 \, |\, s_1as_1^{-1} = a^2 = s_2as_2^{-1}\rangle
$$ 
to 
$$\langle a, tu, tv \rangle \; =\; 
BS(1,2)\ast_{Z[1/2]}BS(1,2) \; \leq \; (BS(1,2) \times \langle u\rangle) \ast_{BS(1,2) \times 1} (BS(1,2) \times \langle v \rangle)
$$
sending $a \mapsto a$, $s_1 \mapsto tu$, $s_2 \mapsto tv$.
For example, $[(tv)(tu)^{-1}, a] = 1$ in this group, but the corresponding word 
$[s_1s_2^{-1}, a]$ in the double 
$\langle a, s_1, s_2 \, | \, s_1as_1^{-1} = a^2,\,  s_2as_2^{-1} = a^2\rangle$ is not trivial.

\end{rem}

\begin{rem}[no short cuts]
In Proposition~\ref{prop:embed0}, one might be tempted to embed the double of $H_n$ over $A_n$ into the double $(H_n \times H_{n-1}) \ast_{H_n} (H_n \times H_{n-1})$. This seems plausible because $H_n$ embeds into $H_n \times H_{n-1}$ with image the subgroup
$$
\langle A_n \times 1, \Delta_{H_{n-1}}\rangle. 
$$
We see this as follows. Given the setup of Lemma~\ref{lem:embed1} one can consider the diagram 
$$
 \begin{tikzcd}
  (A \rtimes B) \arrow{d}{\varphi} & B \arrow{l}{} \arrow{d}{(b, \theta(b))} \arrow{r}{\theta} & C \arrow{d}{\Delta_C}\\
 (A \rtimes B) \times C & B \times C \arrow{l}{} \arrow{r}[swap]{(\theta(b), c)} & C \times C \\
  \end{tikzcd}
$$
The right diagram commutes since 
$b \mapsto (b,\theta(b)) \mapsto (\theta(b), \theta(b))$ gives the same result as 
$b \mapsto  \theta(b) \mapsto (\theta(b), \theta(b))$. Also 
$$
\{(c,c) \, |\, c \in C\} \cap \{(\theta(b), c) \, |\, b \in B, c\in C\} \; =\; 
\{(\theta(b), \theta(b)) \, |\, b \in B\}
$$
is the image of $B$ in $C \times C$ and so the Bass intersection condition holds. 

The left diagram commutes since $b \mapsto (b, \theta(b)) \mapsto (b, \theta(b))$
gives the same result as 
$b \mapsto b = 1.b \mapsto (1.b, \theta(b))$. 
Furthermore, 
$$
B \times C \cap \{(ab, \theta(b)) \, |\, ab \in (A\rtimes B)\} \; =\; \{1.b, \theta(b)) \, | \, b \in B\}
$$
is the image of $B$ in $(A\rtimes B) \times C$ and so the Bass intersection condition holds. 

By Proposition~\ref{basslemma} and using the notation of Lemma~\ref{lem:embed1} $H = (A\rtimes B)\ast_{(B \cong \theta(B))} C$ embeds into $H \times C$ with image the subgroup generated by $A\times 1 \cup \Delta_C$.

As a special application, we have that $H_n = (A_n \rtimes B_n) \ast_{(B_n \cong \theta_n(B_n))} H_{n-1}$ embeds into $H_n \times H_{n-1}$ with image the subgroup 
$$
\langle A_n \times 1, \, \Delta_{H_{n-1}} \rangle . 
$$

Nevertheless, the double 
$H_n \ast_{A_n} H_n$ does not embed into the double $(H_n \times H_{n-1}) \ast_{H_n} (H_n \times H_{n-1})$. 
This is because the Bass intersection condition 
$$
A_n \times 1 \; = \; (H_n \times 1) \cap \langle A_n \times 1, \, \Delta_{H_{n-1}} \rangle
$$
fails to hold in the case $n>1$. The subgroup $(H_n \times 1)$ is normal in $(H_n \times H_{n-1})$ and so the intersection $(H_n \times 1) \cap \langle A_n \times 1, \, \Delta_{H_{n-1}} \rangle $ is the normal closure of $A_n \times 1$ in $\langle A_n \times 1, \, \Delta_{H_{n-1}} \rangle $. For 
$n>1$ this normal closure is strictly larger than $(A_n \times 1)$ and so the induced map is not an embedding. 


\end{rem}

\section{Dehn functions of the finitely presented subgroups}

The purpose of this section is compute the Dehn functions of the subgroups $L_n \ast_{A_n}L_n$ of Proposition~\ref{prop:embed0} for various choices of the terminal vertex groups $S$. 
In order to do this we need to first compute the distortion of $A_n$ in $L_n$. A key ingredient is the fact that the definition of $L_n$ involves an iterated amalgamation of hyperbolic free-by-free groups. 

The first lemma below will be used inductively in Proposition~\ref{prop:distortion-short} to establish the upper bound on the distortion of $A_n = F_{k_{n+1}}$ in $L_n$. Recall that a function $f\!\!\!: [0,\infty) \to [0,\infty)$ is super-additive if $f(x+y) \geq f(x) + f(y)$ for all $x, y \in [0,\infty)$. 

\begin{lem}[amalgam distortion]
\label{ud}
Let $G=(F\rtimes K)*_K H$ be a group amalgamation where the groups $F$, $K$, and $H$ are all finitely generated. Assume that the distortion function $\Dist^H_K$ is dominated by a non-decreasing, super-additive function $f$. Then the distortion function $\Dist^G_F$ is dominated by the composite $\Dist^{F\rtimes K}_F \circ f$. 
\end{lem}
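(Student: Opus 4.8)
## Proof Proposal

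The plan is to estimate, for an element $g \in F$ with $d_G(1,g) \leq x$, how long a word in the generators of $F$ is needed to express $g$, by tracking what happens as we rewrite a geodesic $G$--word for $g$ into normal form with respect to the amalgam $G = (F \rtimes K) \ast_K H$. Fix finite generating sets: $\mathcal{F}$ for $F$, $\mathcal{K}$ for $K$, and $\mathcal{H}$ for $H$ so that $\mathcal{F} \cup \mathcal{K}$ generates $F \rtimes K$ and $\mathcal{K} \subseteq \mathcal{H}$, and take $\mathcal{F} \cup \mathcal{K} \cup \mathcal{H}$ as the generating set for $G$. Let $w$ be a geodesic word for $g$ in these generators, so $|w| = d_G(1,g) \leq x$. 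Decompose $w$ as an alternating product $w = p_0 h_1 p_1 h_2 \cdots h_r p_r$ where each $h_i$ is a subword in $\mathcal{H}$--letters (representing an element of $H$) and each $p_i$ is a subword in $(\mathcal{F} \cup \mathcal{K})$--letters (representing an element of $F \rtimes K$), with $\sum |p_i| + \sum |h_i| \le x$.

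The first main step is to push all the $H$--syllables to one side using the normal form in $F \rtimes K$. Since $F$ is normal in $F \rtimes K$, each $p_i$ can be written as $a_i k_i$ with $a_i \in F$, $k_i \in K$, and $|a_i|_{\mathcal{F}}, |k_i|_{\mathcal{K}}$ bounded linearly in $|p_i|$. Conjugating the $K$--parts and the $H$--parts past the $F$--parts, one rewrites $g$ as a product $a_0 \cdot (\text{conjugate of } a_1) \cdots (\text{conjugate of } a_r) \cdot z$ where $z \in F$ is obtained from the fact that $g \in F$: the leftover product of the $k_i$'s and $h_i$'s lies in $K \cdot H \cdot K \cdots$, and since $g$ and all the $a$--conjugates are in $F$, this leftover element must actually lie in $F \cap (\text{subgroup generated by the }k_i, h_i)$. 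Here is where the hypothesis enters: the element $\prod k_i \cdot \prod h_i$ (suitably interleaved) represents an element of $H$ that happens to lie in $K$, so by $\Dist^H_K \preceq f$ it has $K$--length at most $f(\text{const}\cdot x)$; super-additivity and monotonicity of $f$ let us absorb the several syllables (there are at most $x$ of them, each of $H$--length $\leq x$) into a single bound of the form $f(Cx)$ rather than $x \cdot f(x)$ — this is the reason the hypothesis is phrased with a super-additive dominating function.

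The second main step converts these $F$--length bounds into an $F$--length bound for $g$ itself. Each conjugating element acting on the $a_i$'s lies in $K$ and, after the rewriting, has $K$--length (hence $(F \rtimes K)$--length) at most $O(x) + f(Cx) \preceq f(C'x)$. Thus $g$ is expressed as a product of boundedly many (at most $O(x)$) pieces, each an element of $F$ whose $(F \rtimes K)$--length is at most $O(f(C'x))$; so $d_{F \rtimes K}(1,g) \preceq x \cdot f(C'x) \preceq f(C''x)$ using super-additivity once more. Applying the definition of $\Dist^{F \rtimes K}_F$ to $g$, we get $d_F(1,g) \leq \Dist^{F \rtimes K}_F\bigl(d_{F \rtimes K}(1,g)\bigr) \preceq \Dist^{F \rtimes K}_F(f(C''x))$, which is exactly $\bigl(\Dist^{F\rtimes K}_F \circ f\bigr)(x)$ up to the equivalence of Convention~\ref{cv}. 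Taking the max over all such $g$ gives $\Dist^G_F \preceq \Dist^{F \rtimes K}_F \circ f$.

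I expect the main obstacle to be the bookkeeping in the first step: making the "collapse of the $K$/$H$--leftover into a single element of $K$ of controlled length" fully rigorous. One has to argue carefully that after conjugating the $H$-- and $K$--syllables to the right, the residual word genuinely represents an element of $H$ (using $\mathcal{K} \subseteq \mathcal{H}$ and that conjugation by $F$ does not touch the $H$--factor in any essential way — i.e. that the normal form of $G$ separates the $H$--coset structure cleanly), that this element lies in $K$ precisely because $g \in F$ and $F \cap H = K$ inside $G$, and that the number and individual lengths of syllables feeding into $f$ are both $O(x)$ so that super-additivity yields $f(Cx)$ rather than a product. Getting the constants to line up so the final composite is literally $\Dist^{F\rtimes K}_F \circ f$ (and not something like $x \mapsto \Dist^{F\rtimes K}_F(f(x)) + \dots$) is where the super-additivity and non-decreasing hypotheses must be invoked repeatedly; each invocation is routine but the accounting needs care.
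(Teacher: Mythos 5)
There are two genuine gaps. The more serious one is that you conjugate the $F$--parts $a_i$ past the accumulated $K$-- and $H$--prefixes and assert that ``all the $a$--conjugates are in $F$,'' but this is circular. While $F$ is normal in $F\rtimes K$, it is \emph{not} normal in $G=(F\rtimes K)\ast_K H$: conjugation by $h\in H\setminus K$ need not preserve $F$, so if the accumulated prefix $c_j=k_0h_1k_1\cdots h_j$ fails to lie in $K$, then $c_j a_j c_j^{-1}$ need not lie in $F$. You appeal to ``all the $a$--conjugates are in $F$'' to conclude that the leftover $z$ lies in $F\cap H=K$; but establishing that those conjugates lie in $F$ already requires knowing each $c_j\in K$, which is precisely the collapse you are trying to derive. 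The paper never conjugates: it proves the intermediate bound $\Dist^G_{F\rtimes K}\preceq f$ directly, observing (from the normal form in the amalgam) that a word of length $\leq x$ for $b\in F\rtimes K$ can be chosen so that its $H$--syllables already represent elements of $K$, replaces each syllable in place by a geodesic $K$--word, and only afterwards applies the composition $\Dist^G_F\leq\Dist^{F\rtimes K}_F\circ\Dist^G_{F\rtimes K}$.

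The second gap is the inequality $x\cdot f(C'x)\preceq f(C''x)$, which you attribute to super-additivity but which is false: super-additivity gives $n f(y)\leq f(ny)$, hence $x f(C'x)\leq f(C'x^2)$, and $f(C'x^2)$ is not equivalent to $f(C''x)$ in general (take $f(x)=x^m$). The spurious factor of $x$ appears because you bound each conjugated piece by the \emph{global} estimate $f(C'x)$ rather than by a quantity local to that piece. The paper's bookkeeping is different: it bounds the replacement of the $i$-th syllable by $f(D\ell_i)$ where $\ell_i$ is the \emph{local} syllable length, so super-additivity collapses $\sum_i f(D\ell_i)\leq f(D\sum_i\ell_i)\leq f(Dx)$ with no multiplicative $x$. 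Your accumulated conjugators $c_j$ destroy this locality --- each involves all preceding $k$'s and $h$'s, so its $K$--length is controlled only by a global bound --- and even if the first gap were patched, the estimate would land at $\Dist^{F\rtimes K}_F\bigl(f(Cx^2)\bigr)$, which is strictly weaker than the lemma asserts. (Incidentally, the claim that $|a_i|_{\mathcal F}$ is linear in $|p_i|$ is also false --- that ratio is exactly what $\Dist^{F\rtimes K}_F$ measures --- though since you later use only the $(F\rtimes K)$--length of $a_i$, this is a side issue.)
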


\begin{proof}
Let $G_1=F\rtimes K$ and let $S_F$, $S_K$, and $S_H$ be finite generating sets of $F$, $K$, and $H$ respectively. Assume that $S_K$ is a subset of $S_H$. Then $S_{G_1}=S_F\cup S_K$ and $S_G=S_F\cup S_H$ are finite generating sets of $G_1$ and $G$ respectively and $S_{G_1}$ is a subset of $S_G$. We denote $d_F$, $d_K$, $d_H$, $d_{G_1}$, and $d_G$ the word metrics on the corresponding groups with respect to the given choice of generating sets.

We observe that the homomorphism $G\to H$ taking all generators of $S_F$ to the identity and each generator of $S_H$ to itself shows that $H$ is a retract of $G$ and so are isometrically embedded subgroups. Therefore,
$$\Dist_K^G=\Dist_K^H\preceq f\,.$$
This implies that there are positive integers $C$ and $D=C^2$ such that for each $x\geq 1$ we have $$\Dist_K^G(x)\leq Cf(Cx)\leq f(Dx)\,.$$
We note that the second inequality above comes from the super-additive property of $f$ and $D=C^2$. We also increase $C$ so that $f(Dx)\geq x \text{ for each } x\geq 1\,.$ 

We now prove that $\Dist_{G_1}^G(x)\leq f(Dx)$ for all $x\geq 1$. Indeed, let $b$ be an arbitrary group elements in $G_1$ such that $d_{G}(1,b)\leq x$. Therefore, there is a word $w=u_1v_1u_2v_2\cdots u_nv_n$ in $S_G$ with the length at most $x$ such that
\begin{enumerate}
    \item Each $u_i$ is a word in $S_{G_1}$ and $u_1$ is possibly empty.
    \item Each $v_i$ is not necessarily a word in $S_K$ but it represents a group element $k_i$ in $K$ and $v_n$ is possibly empty. 
\end{enumerate}
By the construction we note that $d_G(1,k_i)\leq \ell(v_i)$. Therefore, 
$$d_K(1,k_i)\leq \Dist_K^G\bigl(\ell(v_i)\bigr)\leq f\bigl(D\ell(v_i)\bigr)\,.$$
This implies that there is a word $v'_i$ in $S_K$ (therefore also in $S_{G_1}$) representing $k_i$ such that $$\ell(v'_i)=d_K(1,k_i)\leq f\bigl(D\ell(v_i)\bigr)\,.$$ Therefore, $w'=u_1v'_1u_2v'_2\cdots u_nv'_n$ is a word in $S_{G_1}$ and represents the group element $b$. Thus,
\begin{align*}
   d_{G_1}(1,b)&\leq\ell(w')\\&\leq \ell(u_1)+\ell(v'_1)+\ell(u_2)+\ell(v'_2)+\cdots+\ell(u_n)+\ell(v'_n)\\&\leq f\bigl(D\ell(u_1)\bigr)+f\bigl(D\ell(v_1)\bigr)+f\bigl(D\ell(u_2)\bigr)+f\bigl(D\ell(v_2)\bigr)+\cdots+f\bigl(D\ell(u_n)\bigr)+f\bigl(D\ell(v_n)\bigr)\\&\leq f\bigl(D(\ell(u_1)+\ell(v_1)+\ell(u_2)+\ell(v_2)+\cdots+\ell(u_n)+\ell(v_n))\bigr)\\&\leq f(Dx) \,. 
\end{align*}
This implies that $\Dist_{G_1}^G(x)\leq f(Dx)$ for all $x\geq 1$. Therefore,
$$\Dist_F^G(x)\leq (\Dist_F^{G_1}\circ \Dist_{G_1}^G)(x) \leq (\Dist_F^{G_1}\circ f)(Dx) \text{ for each } x\geq 1$$
and so $\Dist_F^G\preceq \Dist_F^{G_1}\circ f$.
\end{proof}

The next two lemmas will be used in Proposition~\ref{prop:distortion-short} to establish lower bounds for the distortion of $A_n = F_{k_{n+1}}$ in $L_n$. 

\begin{lem}[Lemma 11.64 in \cite{DK18}]
\label{div}

Let $X$ be a hyperbolic space. Then there is a constant $\alpha \in (0,1)$ depending on the hyperbolicity constant of $X$ such that the following hold. If $[x,y]$ is a geodesic of length $2r$ and $m$ is its midpoint, then every path joining
$x$, $y$ outside the ball $B(m, r)$ has length at least $2^{\alpha (r-1)}$.
\end{lem}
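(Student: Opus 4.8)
The statement to prove is Lemma~\ref{div} (Lemma 11.64 in \cite{DK18}): in a hyperbolic space $X$, there is a constant $\alpha \in (0,1)$ depending on the hyperbolicity constant such that if $[x,y]$ is a geodesic of length $2r$ with midpoint $m$, then every path joining $x$ and $y$ that avoids $B(m,r)$ has length at least $2^{\alpha(r-1)}$.

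\medskip

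\noindent\emph{Proof proposal.} The plan is to exploit the standard fact that in a $\delta$-hyperbolic space, geodesics (and more generally quasi-geodesics) that are forced to travel ``far out'' must have exponentially large length — this is the usual divergence estimate for hyperbolic spaces, which is typically proved by a bisection/telescoping argument. First I would fix the hyperbolicity constant $\delta$ and let $p$ be an arbitrary path from $x$ to $y$ lying outside the open ball $B(m,r)$; write $L$ for the length of $p$. I would parametrize $p$ by arc length and consider the points $p(0)=x, p(L)=y$ together with the midpoint $p(L/2)$ of the path. The key geometric input is: if $z$ is any point on $p$, then since $z$ lies outside $B(m,r)$ we have $d(z,m)\ge r$; on the other hand, $\delta$-thinness of the geodesic triangle with vertices $x$, $y$, $z$ (whose side $[x,y]$ passes through $m$) forces $m$ to be within $\delta$ of $[x,z]\cup[z,y]$. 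Hence one of the two ``halves'' $p|_{[0,L/2]}$ or $p|_{[L/2,L]}$ must come within $\delta$ of $m$ only if that half is long enough to reach back near $m$; more precisely, one shows that each half of $p$, being a path from an endpoint to $p(L/2)$ that must skirt around the ball, still travels distance at least roughly $r - O(\delta)$ in a recursive sense.

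\medskip

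\noindent The cleaner way to organize this is via a telescoping/bisection estimate. Set $r_0 = r$ and, at stage $j$, consider the subsegment of $[x,y]$ of length $2r_j$ centered at a point $m_j$ obtained by bisecting; the corresponding subpath of $p$ between the points of $p$ nearest (in the fellow-traveling sense) to the endpoints of this subsegment still avoids $B(m_j, r_j - c\delta)$ for an appropriate constant $c$, by repeated application of the $\delta$-thin triangles condition. Bisecting $\lfloor \log_2 r\rfloor$ times reduces the length of the central geodesic segment to $O(1)$ while each bisection at worst halves the available radius and subtracts $O(\delta)$; choosing $\alpha$ small enough (depending on $\delta$) so that $r_j \ge r/2^j - C\delta \ge 2^{\alpha(r - r/2^j)}\cdot(\text{something})$ can be made to close up, one concludes that $L \ge 2^{\alpha(r-1)}$. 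Concretely, one proves by induction on $k$ that any path avoiding $B(m,r)$ between the endpoints of a length-$2r$ geodesic has length at least $2 \cdot (\text{length of such a path for radius } r/2 - c\delta)$, and unwinds the recursion $L(r) \ge 2 L(r/2 - c\delta)$ down to the base case, which gives exponential growth with the stated exponent after absorbing constants into $\alpha$ and the ``$-1$'' shift in $2^{\alpha(r-1)}$.

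\medskip

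\noindent The main obstacle is bookkeeping the loss of $O(\delta)$ at each bisection step and making sure it does not swamp the geometric radius before one has performed enough bisections to force exponential length: this is exactly why the exponent $\alpha$ must be allowed to depend on $\delta$ and why the bound is stated with the harmless shift $r-1$ rather than $r$. Since this is a verbatim restatement of Lemma 11.64 in \cite{DK18}, the intended ``proof'' in the paper is simply a citation; were one to reproduce it, the bisection argument above is the standard route and the only real care needed is in tracking these additive hyperbolicity errors through the recursion.
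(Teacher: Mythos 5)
The paper does not prove this lemma; it is quoted verbatim with a citation to \cite{DK18}, so there is no ``paper's proof'' to compare against. That said, since you sketch an argument, it is worth pointing out that your recursion is set up incorrectly in a way that destroys the conclusion.

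You bisect the geodesic $[x,y]$ and propose the recursion
$L(r) \ge 2\,L(r/2 - c\delta)$, where $L(\rho)$ is meant to be the minimal length of a path avoiding a ball of radius $\rho$ in the analogous configuration. Unwinding that recursion $k$ times gives $L(r) \ge 2^{k}\, L\bigl(r/2^{k} - O(\delta)\bigr)$, and the iteration can only continue while $r/2^{k}$ exceeds the accumulated $O(\delta)$ error, i.e.\ for $k \lesssim \log_2(r/\delta)$. That yields only $L(r) \gtrsim r/\delta$ --- a \emph{linear} lower bound, not the exponential one claimed. The problem is that you are halving the radius at each stage, so you run out of radius after logarithmically many steps.

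The standard divergence argument (and the one in \cite{DK18}) instead bisects the \emph{path}, not the geodesic, and the radius shrinks only additively, by $O(\delta)$, per step. Concretely: split the path $p$ of length $L$ at its midpoint $p(L/2)$; $\delta$-thinness of the triangle $\Delta(x,y,p(L/2))$ places $m$ within $\delta$ of a point $m'$ on one of $[x,p(L/2)]$ or $[p(L/2),y]$; the corresponding half of $p$, of length $L/2$, then avoids $B(m', r-\delta)$ and $m'$ lies on the geodesic between that half's endpoints at distance at least $r-\delta$ from each. Iterating $N$ times produces a subpath of length $L/2^{N}$ connecting two points whose geodesic passes through a center at distance at least $r - N\delta$ from each endpoint, so $L/2^{N} \ge 2(r - N\delta)$. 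This is nonvacuous as long as $N < r/\delta$, and taking $N \approx (r-1)/\delta$ gives $L \gtrsim 2^{(r-1)/\delta}$, which is the stated bound with $\alpha \sim 1/\delta$. So the ingredients you cite (bisection, thin triangles, additive $\delta$-loss) are the right ones, but they must be assembled with the halving on the path-length side and the $\delta$-loss on the radius side, not the other way around.
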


\begin{lem}[distortion in hyperbolic free-by-free]
\label{ld}
Let $F_\ell\rtimes F_k$ be a hyperbolic free-by-free group. Let $d_{{F_\ell}}$ and $d_{{F_k}}$ be the word metrics with respect to finite generating sets $S_\ell$ and $S_k$ of $F_\ell$ and $F_k$ respectively. Given $1 \not= b \in F_\ell$ there exists a constant $A>1$ such that if $g \in F_k$ with $d_{F_k}(1,g)$ sufficiently large, then $$d_{{F_\ell}}(1,gbg^{-1})\geq A^{d_{F_k}(1,g)} \,.$$
\end{lem}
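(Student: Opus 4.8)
The plan is to work in the Cayley graph $X=\Cayley(G,S_\ell\cup S_k)$ of $G=F_\ell\rtimes F_k$, which is $\delta$--hyperbolic for some $\delta$, and to extract the exponential lower bound from the divergence estimate of Lemma~\ref{div}. Write $n=d_{F_k}(1,g)$ and $D=d_{F_\ell}(1,gbg^{-1})$; the goal is $D\ge A^{\,n}$ for suitable $A>1$ and all large $n$. First I record two bookkeeping facts coming from the retraction $\rho\colon G\to F_k$ that kills $F_\ell$: it is $1$--Lipschitz for the chosen generators, so $|g|_X=|g|_{F_k}=n$ and, for every $x\in G$, $d_X(x,F_\ell)=|\rho(x)|_{F_k}$. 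In particular $d_X(1,gbg^{-1})\le 2n+|b|_X$ (the obvious expression has that length), and $d_X(gb^{j},F_\ell)=n$ for every $j\in\Z$. Finally, a $d_{F_\ell}$--geodesic word for $gbg^{-1}$ traces an edge path $P$ of length $D$ from $1$ to $gbg^{-1}$ lying in $\Cayley(F_\ell)\subseteq X$, so every point of $P$ is within $\tfrac12$ of the vertex set $F_\ell$.

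The heart of the argument is to show that the midpoint $m$ of a geodesic $[1,gbg^{-1}]$ in $X$ satisfies $d_X(m,F_\ell)\ge n-K$ for a constant $K=K(\delta,b)$ once $n$ is large. Here I use standard facts about the hyperbolic isometry $h:=gbg^{-1}$, which has infinite order (as $b\ne1$ in the free group $F_\ell$, hence in $G$) and translation length $\tau=\tau_X(b)>0$. Its axis is $\mathrm{Ax}(h)=g\cdot\mathrm{Ax}(b)$; since $\langle b\rangle$ is a quasigeodesic with constants depending only on $\delta$ and $b$, the line $\mathrm{Ax}(b)$ lies within bounded Hausdorff distance of $\langle b\rangle$, and translating by $g$ we get that $\mathrm{Ax}(h)$ lies within a constant $K'=K'(\delta,b)$ of $g\langle b\rangle=\{gb^{j}\}$. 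On the other hand a geodesic joining a point to its image under a hyperbolic isometry tracks that isometry's axis up to $O(\delta+\tau)$, and the length comparison $d_X(1,h)=2\,d_X(1,\mathrm{Ax}(h))+\tau+O(\delta)$ places the midpoint $m$ within $O(\delta+\tau)$ of $\mathrm{Ax}(h)$. Hence $m$ lies within $K$ of some $gb^{j}$, and by the retraction fact $d_X(m,F_\ell)\ge d_X(gb^{j},F_\ell)-K=n-K$. Note that this also forces $d_X(1,gbg^{-1})=2\,d_X(1,m)\ge 2(n-K)$.

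Now I feed this into Lemma~\ref{div}. Put $\rho:=d_X(m,F_\ell)-1$, so $n-K-1\le\rho\le d_X(1,m)-1<\tfrac12 d_X(1,gbg^{-1})$, and let $[x'',y'']$ be the subsegment of $[1,gbg^{-1}]$ of length $2\rho$ centred at $m$. Then $d_X(x'',1)=d_X(y'',gbg^{-1})=\tfrac12 d_X(1,gbg^{-1})-\rho\le K_2:=|b|_X+2K+2$, using the two length bounds above. Consider the path $P''=[x'',1]\cup P\cup[gbg^{-1},y'']$ from $x''$ to $y''$: its middle portion $P$ stays within $\tfrac12$ of $F_\ell$, which is at distance $\rho+1$ from $m$, hence lies outside $B(m,\rho)$; and each geodesic stub lies on $[1,gbg^{-1}]$ on the far side of $x''$ (resp.\ $y''$) from $m$, hence also at distance $\ge\rho$ from $m$. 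So $P''$ joins $x''$ to $y''$ outside $B(m,\rho)$, and Lemma~\ref{div} gives $\mathrm{length}(P'')\ge 2^{\alpha(\rho-1)}$ with $\alpha\in(0,1)$ depending only on $\delta$. Since $\mathrm{length}(P'')=\bigl(d_X(1,gbg^{-1})-2\rho\bigr)+D\le K_2+D$, we conclude $D\ge 2^{\alpha(\rho-1)}-K_2\ge 2^{\alpha(n-K-2)}-K_2$, and for $n$ large this exceeds $A^{\,n}$ with $A=2^{\alpha/2}>1$, which is the desired $d_{F_\ell}(1,gbg^{-1})\ge A^{\,d_{F_k}(1,g)}$.

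The main obstacle is the geometric input of the second paragraph: one must show, \emph{uniformly} as $g$ ranges over $F_k$, that a geodesic from $1$ to $gbg^{-1}$ in $X$ has its midpoint a bounded distance from $g\langle b\rangle$. This is exactly where hyperbolicity of $G$ is used — it is the quantitative failure of $F_\ell$ to be quasiconvex in $G$ — and the uniformity is not automatic; it comes from the facts that $b$ is fixed (so its translation length and the quasigeodesic constants of $\langle b\rangle$ are fixed) and that $\mathrm{Ax}(gbg^{-1})$ and $g\langle b\rangle$ are the left--translates by $g$ of $\mathrm{Ax}(b)$ and $\langle b\rangle$. The remaining steps are routine manipulations with the $1$--Lipschitz retraction $\rho$ and the estimate of Lemma~\ref{div}.
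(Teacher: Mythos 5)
Your proof is correct and takes a genuinely different route from the paper's. Both arguments have the same final structure: locate a point $m$ on a geodesic $\beta_3$ from $1$ to $gbg^{-1}$ in the Cayley graph of $G$ which is at distance roughly $n = d_{F_k}(1,g)$ from $F_\ell$, observe that the path tracing a shortest $F_\ell$--word for $gbg^{-1}$ stays outside $B(m, n - O(1))$, and invoke the divergence bound of Lemma~\ref{div}. Where they differ is in how the point $m$ is found. The paper works with the quasiconvex subgroups $F_k$ and $gbF_k$: after checking $bF_kb^{-1}\cap F_k = 1$ via a centralizer argument, it applies Proposition 9.4 of Hruska to bound $\diam\bigl(N_{5\delta}(F_k)\cap N_{5\delta}((gb)F_k)\bigr)$ and uses a thin-triangle argument on the triangle with vertices $1$, $g$, $gbg^{-1}$ to force the Gromov inner point of $\beta_3$ to lie within $D+2\delta$ of $g$ itself. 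You instead treat $h=gbg^{-1}$ as a loxodromic isometry with quasi-axis $g\langle b\rangle$ and use the standard tracking and translation-length estimates for loxodromics to place the midpoint of $\beta_3$ within bounded distance of $g\langle b\rangle$; the uniformity in $g$ comes, as you note, from conjugation being an isometry and $b$ being fixed. Your route avoids both the centralizer lemma and Hruska's bounded-coset-intersection theorem, and makes transparent the role of the $1$--Lipschitz retraction $\rho\colon G\to F_k$, at the cost of invoking the loxodromic-axis machinery. Two small imprecisions worth flagging: (a) the phrase "within $O(\delta+\tau)$ of the axis" is not the right functional form — in the hyperbolic plane the midpoint sits at distance of order $\ln(1/\tau)$ from the axis when $d(1,\mathrm{Ax}(h))$ is large, so the constant blows up as $\tau\to 0$; this is harmless since $\tau=\tau(b)>0$ is fixed, but the dependence should be stated as a constant $K(\delta,b)$ rather than as $O(\delta+\tau)$; and (b) the length bookkeeping gives $d_X(1,gbg^{-1})-2\rho\le 2K_2$ rather than $\le K_2$ with your chosen $K_2$, which is immaterial for the conclusion.
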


\begin{proof}
Let $G=F_\ell\rtimes F_k$. Then $S_G=S_\ell\cup S_k$ is a generating set of $G$. We denote $d_G$ the word metric on $G$ with respect to the finite generating set $S_G$ and assume that $(G,d_G)$ is a $\delta$--hyperbolic space for some $\delta >1$. The group monomorphism sending all elements in $S_\ell$ to the identity and sending each element in $S_k$ to itself shows that $F_k$ is isometrically embedded into $G$ with the given word metrics.

We claim that $bF_kb^{-1}\cap F_k=\{1\}$. Assume to the contrary that $bF_kb^{-1}\cap F_k$ is a non-trivial group. Then there are nontrivial two group elements $a_1$ and $a_2$ in $F_k$ such that $ba_1b^{-1}=a_2$. This implies that $a_1^{-1}a_2=(a_1^{-1}b a_1)b^{-1}$ is a group element in the trivial group $F_k\cap F_\ell$. Therefore $a_1=a_2$ which commutes to $b\in F_\ell$. This implies that $a_1$ belong to the centralizer $C(b)$ of $b$ which is virtually cyclic (see \cite{MR1086648}). Therefore, there are some non-zero integers $p$ and $q$ such that $a_1^p=b^q\in F_k\cap F_\ell=\{1\}$. Since $G$ is torsion free, the group element $a_1$ must be trivial which is a contradiction.
Thus, $$bF_kb^{-1}\cap F_k=\{1\}\,.$$ By Proposition 9.4 in \cite{Hruska10} there is a constant $D>0$ such that $$\diam \bigl(N_{5\delta}(F_k)\cap N_{5\delta}(bF_k)\bigr)\leq D\,.$$
Therefore, $$\diam \bigl(N_{5\delta}(F_k)\cap N_{5\delta}((gb)F_k)\bigr)=\diam g\bigl(N_{5\delta}(F_k)\cap N_{5\delta}(bF_k)\bigr)\leq D\,.$$

Let $n=d_{F_k}(1,g)$. Let $\beta_1$ (resp. $\beta_2$) be the geodesic in the Cayley graph $\Gamma(G,S_G)$ connecting $1$ and $g$ (resp. connecting $gb$ and $gbg^{-1}$) with edges labeled by elements in $S_k$. Then vertices of $\beta_1$ (resp. $\beta_2$) are group elements in $F_k$ (resp. $(gb)F_k$). Let $e_1$ be an edge label by $b\in S_\ell$ connecting $g$ and $gb$. Let $\beta_3$ be a geodesic in $\Gamma(G,S_G)$ connecting $1$ and $gbg^{-1}$. Since $(G,d_G)$ is $\delta$--hyperbolic, there are vertices $m$ in $\beta_3$, $x\in\beta_1$, and $y\in (e_1\cup\beta_2)$ such that 
$$d_G(m,x)\leq 2\delta \text{ and } d_G(m,y)\leq 2\delta\,.$$
This implies that $$d_G(x,(gb)F_k)\leq d_G(x,m)+d(m,y)+d(y,(gb)F_k)\leq 2\delta+2\delta+1< 5\delta\,.$$
Hence $x\in N_{5\delta}(F_k)\cap N_{5\delta}((gb)F_k)$. Also, $g\in N_{5\delta}(F_k)\cap N_{5\delta}((gb)F_k)$. This implies that $$d_G(x,g)\leq\diam \bigl(N_{5\delta}(F_k)\cap N_{5\delta}((gb)F_k)\bigr)\leq D\,.$$ Therefore, $$d_G(g,m)\leq d_G(g,x)+d_G(x,m)\leq D+2\delta.\,.$$

Let $\gamma$ be the path in the Cayley graph $\Gamma(G,S)$ which connects $1$ and $gbg^{-1}$ and traces the shortest word in $S_\ell$ representing the element $gbg^{-1}$. Then $\ell(\gamma)=d_{F_\ell}(1,gbg^{-1})$. Moreover, each vertex of $v$ of $\gamma$ is an element $a\in \F_\ell$. The group monomorphism sending all elements in $S_\ell$ to the identity and sending each element in $S_k$ to itself shows that $d_G(g,v)\geq d_{F_k}(g,1)=n$. This implies that $\gamma$ lies outside the open ball $B(g,n)$. Also $d_G(g,m)\leq D+2\delta$. The path $\gamma$ lies outside the open ball $B(m,n-D-2\delta)$. Here we assume that $n>D+2\delta$.

 Let $z_1$ (resp. $z_2$) be the point in the geodesic segment $[1,m]$ (resp. $[gbg^{-1},m]$) of $\beta_3$ such that the length of $[z_1,m]$ (resp. $[z_2,m]$) is exactly $n-D-2\delta$. Let $\gamma_1$ (resp. $\gamma_2)$ be the geodesic subsegment of $[1,m]$ (resp. the subgeodesic segment of $[gbg^{-1},m]$) connecting $1$ and $z_1$ (resp. connecting $gbg^{-1}$ and $z_2$). Therefore,
\begin{align*}
  \ell(\gamma_1)&= d_G(1,z_1)\\&=d_G(1,m)-d_G(z_1,m)\\&\leq \bigl(d_G(1,g)+d_G(g,m)\bigr)- (n-D-2\delta)\\&\leq (n+D+2\delta)-(n-D-2\delta)\\&\leq 2D+4\delta\,.
\end{align*}
Similarly, we also have
\begin{align*}
  \ell(\gamma_2)&= d_G(gbg^{-1},z_2)\\&=d_G(gbg^{-1},m)-d_G(z_2,m)\\&\leq \bigl(d_G(gbg^{-1},gb)+d_G(gb,g)+d_G(g,m)\bigr)- (n-D-2\delta)\\&\leq \bigl(d_G(g^{-1},1)+1+d_G(g,m)\bigr)- (n-D-2\delta)\\&\leq (n+1+D+2\delta)-(n-D-2\delta)\\&\leq 2D+4\delta+1\,.
\end{align*}

The subsegment $[z_1,z_2]$ of $\beta_3$ is a geodesic of length $2\bigl(n-D-2\delta\bigr)$ and $m$ is its midpoint. By Lemma~\ref{div} the path $\bar{\gamma}=\gamma_1\cup \gamma \cup \gamma_2$ joining
$z_1$, $z_2$ outside the open ball $B(m,n-D-2\delta)$ has length at least $2^{\alpha (n-D-2\delta)}$ where the constant $\alpha>0$ only depends on the hyperbolicity constant of $(G,d_G)$. Therefore,
\begin{align*}
  d_{F_\ell}(1,gbg^{-1})&=\ell(\gamma)\\&=\ell(\bar{\gamma})-\ell(\gamma_1)-\ell(\gamma_2)\\&\geq 2^{\alpha (n-D-2\delta)}-(2D+4\delta)-(2D+4\delta+1)\\&\geq (\sqrt{2})^{\alpha n}  
\end{align*}
for $n$ sufficiently large. Therefore, $A=(\sqrt{2})^{\alpha}>1$ is the desired constant.
\end{proof}

The following proposition establishes the Dehn function of the subgroups $L_n \ast_{A_n} L_n$ of Proposition~\ref{prop:embed0}. 

\begin{prop}[distortion and Dehn functions in amalgams]\label{prop:distortion-short}
Let $n$ be a positive integer and let 
$F_{k_{i+1}} \rtimes F_{k_i}$ be a hyperbolic free-by-free group for $1 \leq i \leq n$. Let $S$ be a finitely generated group containing a free subgroup $F_{k_1}$ such that $\Dist_{F_{k_1}}^S$ is equivalent to a non-decreasing, super-additive function $f$. As in Proposition~\ref{prop:embed0} define groups $L_i$ for $0\leq i\leq n$ inductively by $L_0=S$ and 
$L_i \; =\; (F_{k_{i+1}} \rtimes F_{k_i}) \ast_{F_{k_i}} L_{i-1}$. 
Then the distortion of $F_{k_{n+1}}$ in $L_n$ is equivalent to $\exp^{(n)}(f(x))$. 

Moreover, if the Dehn function of $L_n$ is dominated by a polynomial function, then the Dehn function of the double $L_n \ast_{F_{k_{n+1}}}L_n$ is equivalent to $\exp^{(n)}(f(x))$. 

\end{prop}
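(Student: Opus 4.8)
The statement has two parts: an exact computation of $\Dist_{F_{k_{n+1}}}^{L_n}$ up to equivalence, and the consequent identification of the Dehn function of the double. The plan is to prove the distortion claim by induction on $n$, using Lemma~\ref{ud} for the upper bound and Lemma~\ref{ld} (together with the divergence estimate of Lemma~\ref{div}, already folded into Lemma~\ref{ld}) for the lower bound, and then to feed the result into Theorem~\ref{BH1}.

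\emph{Upper bound on distortion.} The base case $n=0$ is the hypothesis $\Dist_{F_{k_1}}^S \sim f$. For the inductive step, suppose $\Dist_{F_{k_n}}^{L_{n-1}}$ is equivalent to $\exp^{(n-1)}(f(x))$; one first checks that this function is equivalent to a non-decreasing, super-additive function (iterated exponentials of a super-additive function are easily seen to dominate, and be dominated by, such a function — this is the kind of routine manipulation of Convention~\ref{cv} I would not spell out in full). Then $L_n = (F_{k_{n+1}} \rtimes F_{k_n}) \ast_{F_{k_n}} L_{n-1}$ has exactly the shape required by Lemma~\ref{ud}, with $F = F_{k_{n+1}}$, $K = F_{k_n}$, $H = L_{n-1}$, $G_1 = F_{k_{n+1}} \rtimes F_{k_n}$. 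Lemma~\ref{ud} gives $\Dist_{F_{k_{n+1}}}^{L_n} \preceq \Dist_{F_{k_{n+1}}}^{F_{k_{n+1}}\rtimes F_{k_n}} \circ \bigl(\exp^{(n-1)}(f(x))\bigr)$. Since $F_{k_{n+1}} \rtimes F_{k_n}$ is hyperbolic, the distortion of the normal free factor is at most exponential (it is finitely generated and the ambient group has a linear isoperimetric inequality, so word length of $gbg^{-1}$-type elements is at most exponential in the length in $G_1$ — or one simply cites the standard fact that subgroup distortion in a hyperbolic group is at most exponential), i.e. $\Dist_{F_{k_{n+1}}}^{F_{k_{n+1}}\rtimes F_{k_n}} \preceq \exp(x)$. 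Composing yields $\Dist_{F_{k_{n+1}}}^{L_n} \preceq \exp\bigl(\exp^{(n-1)}(f(x))\bigr) = \exp^{(n)}(f(x))$.

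\emph{Lower bound on distortion.} Again induct on $n$. Fix a nontrivial $b_0 \in F_{k_1} \subseteq S$ with $\Dist_{F_{k_1}}^S$ realized along powers/conjugates forcing $d_{F_{k_1}}(1, h) \gtrsim f(d_S(1,h))$ for suitable $h$; more precisely I would propagate a single well-chosen element. At stage $i$, take the element $w_i \in F_{k_i}$ of $L_{i-1}$-length roughly $x$ but $F_{k_i}$-length roughly $\exp^{(i-1)}(f(x))$ (inductive hypothesis), and feed it as the conjugator $g$ in Lemma~\ref{ld} applied to the hyperbolic free-by-free group $F_{k_{i+1}} \rtimes F_{k_i}$: this produces $g b g^{-1} \in F_{k_{i+1}}$ whose $F_{k_{i+1}}$-length is at least $A^{d_{F_{k_i}}(1,g)} \geq A^{\exp^{(i-1)}(f(x))} \sim \exp^{(i)}(f(x))$, while its length in $L_i$ is bounded by $d_{L_i}(1,g) + O(1) \preceq x$ (since $g$ already had $L_{i-1}$-length $\asymp x$ and $L_{i-1} \hookrightarrow L_i$ is undistorted, being a vertex group of an amalgam). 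The one point needing care is that the conjugated element must remain ``visible'' — that $d_{F_{k_{i+1}}}$ is the correct metric to measure it in and that $F_{k_{i+1}} \hookrightarrow L_i$ does not shrink it; this follows because $F_{k_{i+1}} \trianglelefteq F_{k_{i+1}}\rtimes F_{k_i}$ retracts, hence is undistorted in $F_{k_{i+1}}\rtimes F_{k_i}$, which is itself a vertex group of the amalgam $L_i$. Combining with the upper bound gives $\Dist_{F_{k_{n+1}}}^{L_n} \sim \exp^{(n)}(f(x))$.

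\emph{The Dehn function of the double.} With $\delta_{L_n}$ dominated by a polynomial, apply Theorem~\ref{BH1} to $G = L_n$, $H = F_{k_{n+1}}$, $\Gamma = L_n \ast_{F_{k_{n+1}}} L_n$. The lower bound $\Dist_{F_{k_{n+1}}}^{L_n}(x) \preceq \delta_\Gamma(x)$ gives $\exp^{(n)}(f(x)) \preceq \delta_\Gamma$. For the upper bound, $\delta_\Gamma(x) \preceq x\,\bigl(\delta_{L_n} \circ \Dist_{F_{k_{n+1}}}^{L_n}\bigr)(x) \preceq x\cdot \bigl(\Dist_{F_{k_{n+1}}}^{L_n}(x)\bigr)^d = x\cdot\bigl(\exp^{(n)}(f(x))\bigr)^d$ for some $d \geq 1$; since a polynomial in an iterated exponential is absorbed (the key observation $\bigl(\exp^{(n)}(y)\bigr)^d \preceq \exp^{(n)}(Cy)$ because $d\exp^{(n-1)}(y) \leq \exp^{(n-1)}(Cy)$, using $n \geq 1$), this is $\sim \exp^{(n)}(f(x))$. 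Hence $\delta_\Gamma \sim \exp^{(n)}(f(x))$.

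\emph{Main obstacle.} The delicate part is the lower bound for the distortion: one must choose the conjugating elements $g$ and the ``seed'' $b$ coherently through all $n$ amalgamation stages so that at each stage the hypothesis ``$d_{F_{k_i}}(1,g)$ sufficiently large'' in Lemma~\ref{ld} is met by the element coming from the previous stage, and so that lengths measured in the intermediate groups $F_{k_i} \subseteq F_{k_{i+1}}\rtimes F_{k_i} \subseteq L_i \subseteq L_{i+1}$ transfer without distortion at each inclusion. All the needed non-distortion facts are the retraction/vertex-group observations above, but organizing the induction so these line up — and checking that the resulting iterated-exponential functions are genuinely super-additive up to equivalence so Lemma~\ref{ud} keeps applying — is where the real work lies.
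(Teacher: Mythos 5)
Your proposal follows essentially the same route as the paper's proof: the upper bound by iterating Lemma~\ref{ud} (after first noting the exponential distortion of the normal free kernel in each hyperbolic free-by-free piece), the lower bound by propagating an iterated conjugate $g_{i+1}=g_i b_{i+1} g_i^{-1}$ through Lemma~\ref{ld}, and the passage to the Dehn function of the double via Theorem~\ref{BH1} with the polynomial absorbed into the iterated exponential. One small caution: you justify the non-distortion of $L_{i-1}$ in $L_i$ by saying it is a vertex group of an amalgam, but vertex groups of amalgams are not always undistorted; the correct (and readily available) reason, which the paper's proof of Lemma~\ref{ud} makes explicit, is that $L_{i-1}$ is a \emph{retract} of $L_i$ via $(F_{k_{i+1}}\rtimes F_{k_i})\ast_{F_{k_i}}L_{i-1}\to L_{i-1}$ induced by the retraction $F_{k_{i+1}}\rtimes F_{k_i}\to F_{k_i}$, so word metrics agree up to multiplicative constants.
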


\begin{proof}

We first prove the upper bound for the distortion $\Dist_{F_{k_{n+1}}}^{L_n}$. First of all, note that for each $i\in\{1,2,\cdots,n\}$ the distortion of $F_{k_{i+1}}$ in $F_{k_{i+1}} \rtimes F_{k_i}$ is equivalent to $e^x$. The exponential lower bound for $\Dist^{F_{k_{i+1}} \rtimes F_{k_i}}_{F_{k_{i+1}}}$ follows from Lemma~\ref{ld} and the exponential upper bound can be seen from Exercise 6.19 in part III.$\Gamma$ in \cite {MR1744486}. Next, apply Lemma~\ref{ud} inductively to obtain the upper bound of $\exp^{(n)}(f(x))$ for $\Dist_{F_{k_{n+1}}}^{L_n}$. The induction works because $f$ is non-decreasing and super-additive by hypothesis and each $\exp^{(i)}(f(x))$ is also non-decreasing and super-additive.

We now prove the lower bound of the distortion. For $1\leq i\leq n+1$ let $S_i$ be a generating set of $F_{k_i}$. 
These choices of generating sets induces the word metrics on $F_{k_1}, F_{k_2}, \cdots, F_{k_{n+1}} $ and we denote them $d_{F_{k_1}}, d_{F_{k_2}}\cdots, d_{F_{k_{n+1}}}$ respectively. We fix a finite generating set $T$ of $S$ that contains the finite generating set $S_1$ of $F_{k_1}$. Then $S_{L_n}=\bigl(\bigcup S_i\bigr)\bigcup T$ generates $L_n$ and we let $d_{L_n}$ be the word metric on $L_n$ induced by $S_{L_n}$. We also denote $d_S$ the word metric on the group $S$ induced by the generating set $T$.

For each $i\in\{2,3,\cdots,n+1\}$ choose $b_i$ be an element in the generating set $S_{i}$ of $F_{k_i}$. By Lemma~\ref{ld} there is a constant $A>1$ such that for each $i\in\{1,2,\cdots,n\}$ if $g_i$ is a group element in $F_{k_i}$ with $d_{F_{k_i}}(1,g_i)$ sufficiently large then $$d_{{F_{k_{i+1}}}}(1,g_ib_{i+1}g_i^{-1})\geq A^{d_{F_{k_i}}(1,g_i)} \,.$$ Define the function $F(x)=A^x$ for each $x\geq 1$. Then we observe that two functions $F^{(n)}(f(x))$ and $\exp^{(n)}(f(x))$ are equivalent. Therefore, it is sufficient to show that $\Dist^{L_n}_{F_{k_{n+1}}}$ dominates the function $F^{(n)}(f(x))$. 

Since $\Dist^S_{F_{k_1}}$ is equivalent to $f(x)$, there is a positive integer $D>1$ such that $$f(x)\leq D \Dist^S_{F_{k_1}}(Dx) \text{ for each $x\geq 1$}\,.$$ Let $C=D^2>1$. Then we observe that
$$\Dist^S_{F_{k_1}}(Cx)= \Dist^S_{F_{k_1}}(D^2x)\geq (1/D)f(Dx)\geq f(x)\text{ for each $x\geq 1$}\,.$$
We note that the last inequality holds due to the fact $f(x)$ is super-additive. 

Let $x\geq 1$ be an arbitrary number. Let $g_1$ be a group element in $F_{k_1}$ such that $$d_S(1,g_1)\leq Cx \text{ and } d_{F_{k_1}}(1,g_1)=\Dist^S_{F_{k_1}}(Cx)\geq f(x)\,.$$ For each $i\in\{2,3,\cdots,n+1\}$ we define $g_i=g_{i-1}b_ig_{i-1}^{-1}$ by induction on $i$. Then each $g_{i}$ is a group element in $F_{k_{i}}$. By using an argument on $i$ we can prove that for each $i\in \{0,1,2,\cdots,n\}$ we have $$d_{{F_{k_{i+1}}}}(1,g_{i+1})\geq F^{(i)}(f(x)) \text{ for $x$ sufficiently large}\,.$$
In particular, $$d_{{F_{k_{n+1}}}}(1,g_{n+1})\geq F^{(n)}(f(x)) \text{ for $x$ sufficiently large}\,.$$

By using an argument on $i$ again we can also prove for each $i\in \{0,1,2,\cdots,n\}$ we have $$d_{L_n}(1,g_{i+1})\leq (3^iC)x\,.$$
In particular, $d_{L_n}(1,g_{n+1})\leq (3^nC)x\,.$ Therefore, 
$$\Dist^{L_n}_{F_{k_{n+1}}}\bigl((3^nC)x\bigr)\geq d_{{F_{k_{n+1}}}}(1,g_{n+1})\geq F^{(n)}(f(x)) \text{ for $x$ sufficiently large}\,.$$
This implies that $\Dist^{L_n}_{F_{k_{n+1}}}$ dominates the function $F^{(n)}(f(x))$. Therefore, the distortion of $F_{k_{n+1}}$ in $L_n$ is equivalent to $\exp^{(n)}(f(x))$.

We now prove that the Dehn function of $L_n \ast_{F_{k_{n+1}}} L_n$ is $\exp^{(n)}(f(x))$. Let $D_n$ be the double of $L_n$ over $F_{k_{n+1}}$ and denote the Dehn function of $D_n$ by $\delta_{D_n}$. 
By hypothesis, the Dehn function $\delta_{L_n}$ of $L_n$ is dominated by a polynomial function $x^d$. By Theorem~\ref{BH1} we have
$$\Dist^{L_n}_{F_{k_{n+1}}}(x)\preceq \delta_{D_n}(x)\preceq x\bigl(\delta_{L_n}\circ \Dist^{L_n}_{F_{k_{n+1}}}\bigr)(x)\,,$$
which implies that
$$\exp^{(n)}(f(x)) \preceq \delta_{D_n}(x)\preceq x\bigl(\exp^{(n)}(f(x))\bigr)^d\leq \bigl(\exp^{(n)}(f(x))\bigr)^{d+1}\,.$$
Note that the two functions $\exp^{(n)}(f(x))$ and $\bigl(\exp^{(n)}(f(x))\bigr)^{d+1}$ are equivalent (the exponent $d+1$ is absorbed into the super-additive function $\exp^{(n-1)}(f(x))$). Then the Dehn function of the double of $L_n$ over $F_{k_{n+1}}$ is equivalent to $\exp^{(n)}(f(x))$.
\end{proof}

As we shall see in Section~5, the hypothesis in the previous proposition that $L_n$ has a polynomial isoperimetric inequality holds automatically in the case that the terminal vertex group $S$ is a $\CAT(0)$ $F_k \rtimes \Z$ group. The next proposition shows that $L_n$ has a polynomial isoperimetric inequality in the case where the terminal vertex group $S$ is a snowflake group. 

The proof of the next proposition uses the machinery of singular disk diagrams and combinatorial 2--complexes. The reader may wish to refer to Section~2 of \cite{2020arXiv201200730B} for background on combinatorial complexes and singular disk fillings of combinatorial loops; specifically, Definitions~2.4, 2.9, 2.10, and 2.11. This result is similar to Theorem~5.1 of \cite{MR1781853} and to Theorem~4 of \cite{MR2691093}.

\begin{prop}[Dehn function of amalgams]
\label{polydehn}
Let $G=A*_C B$ be a group amalgamation where $A$ and $B$ are finitely presented groups and $C$ is a finitely generated group whose distortion $\Dist_C^G(x)$ is equivalent to a super additive function. Then $$\delta_G(x)\preceq x\max\{\delta_A\bigl(x\Dist^G_C(x)\bigr),\delta_B\bigl(x\Dist^G_C(x)\bigr)\}\,,$$
where $\delta_A$, $\delta_B$, and $\delta_G$ are Dehn functions of $A$, $B$, and $G$ respectively.
\end{prop}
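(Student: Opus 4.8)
I would prove this through singular disk diagrams over a finite $2$--complex modelling $G$: cut such a diagram into ``$A$--pieces'' and ``$B$--pieces'' along a family of disjoint \emph{$C$--corridors}, surgically replace each corridor by a geodesic word in $C$, and re-fill the pieces over $A$ and $B$.

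First I would fix finite presentations $A=\langle S_A\mid R_A\rangle$ and $B=\langle S_B\mid R_B\rangle$ and a finite generating set $S_C$ of $C$, and build the standard graph--of--spaces $2$--complex $X$ for $G=A\ast_C B$ (presentation complexes for $A$ and $B$ joined via the mapping cylinder of a wedge of $|S_C|$ circles mapped into each factor so as to realise the inclusions of $C$). Then $X$ is finite, $\pi_1(X)\cong G$, and, since $X$ is a finite $2$--complex with $\pi_1\cong G$, its combinatorial isoperimetric function is equivalent to $\delta_G$. Given a null--homotopic $w$ with $|w|=n\le x$, represented by an edge loop in $X$ of length $O(n)$, choose a minimal--area singular disk diagram $f\colon\Delta\to X$ for it. The preimage under $f$ of the midlines of the cylinder part of $X$ is a disjoint $1$--manifold in the disk $\Delta$; by minimality there are no circles, so it is a disjoint family of properly embedded arcs $\alpha_1,\dots,\alpha_k$ with endpoints on $\partial\Delta$, the $C$--corridors. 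As the $\alpha_j$ are disjoint arcs their $2k$ endpoints are distinct vertices of $\partial\Delta$, so $k=O(n)=O(x)$; cutting $\Delta$ along the corridors yields $O(x)$ sub--diagrams $\Delta_1,\dots,\Delta_{k+1}$, each mapping into the $A$--part or the $B$--part of $X$ (hence a singular disk diagram over $A$ or over $B$), with the incidence graph of the $\Delta_i$ (an edge per corridor) a tree. Each $\alpha_j$ reads a word over $S_C$ on each side; by minimality these are freely reduced and equal, giving a single word $w_j$ over $S_C$ representing an element $c_j\in C$.

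The key estimate is that $d_G(1,c_j)$ is linear in $|w|$. Indeed $\alpha_j$ separates $\Delta$ into two subdiagrams; choosing one of them, it is bounded by $\alpha_j$ together with a subarc $\lambda_j$ of $\partial\Delta$, so $c_j=[\lambda_j]$ in $G$ and $d_G(1,c_j)\le|\lambda_j|\le O(|w|)\le O(x)$. Hence $d_C(1,c_j)\le\Dist^G_C(O(x))\preceq\Dist^G_C(x)$, and I fix a geodesic word $\gamma_j$ over $S_C$ with $[\gamma_j]=c_j$ and $|\gamma_j|\le\Dist^G_C(O(x))$. Now modify each piece $\Delta_i$ by replacing, along its boundary, every occurrence of a corridor word $w_j$ by $\gamma_j$; since $w_j=_C\gamma_j$ and $S_C\subseteq S_A\cap S_B$, the boundary word of $\Delta_i$ stays trivial in $A$ (respectively $B$), so there is a singular disk diagram $\Delta_i'$ over $A$ (respectively $B$) with the new boundary word — the original $\Delta_i$, whose boundary may have been long, is discarded. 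The new boundary word has length at most (the share of $\partial\Delta$ carried by $\Delta_i$) plus $k\cdot\max_j|\gamma_j|\le O(x)+O(x)\,\Dist^G_C(O(x))\preceq x\,\Dist^G_C(x)$, so $\Area(\Delta_i')\preceq\max\{\delta_A(x\,\Dist^G_C(x)),\delta_B(x\,\Dist^G_C(x))\}$. Each corridor bordered exactly two pieces, and after the surgery both carry the same arc $\gamma_j$ (with opposite orientations), so I can reglue the $\Delta_i'$ along the $\gamma_j$; as the incidence graph is a tree this produces a singular disk diagram $\Delta'$ for $w$ with $\Area(\Delta')=\sum_i\Area(\Delta_i')\le(k+1)\max\{\delta_A(x\,\Dist^G_C(x)),\delta_B(x\,\Dist^G_C(x))\}$. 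Taking the maximum over all $w$ with $|w|\le x$ and absorbing constants — using that $\delta_A,\delta_B,\Dist^G_C$ are nondecreasing, and that $\Dist^G_C$ is, up to equivalence, super--additive — yields $\delta_G(x)\preceq x\max\{\delta_A(x\,\Dist^G_C(x)),\delta_B(x\,\Dist^G_C(x))\}$.

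I expect the main obstacle to be the first ingredient: carefully setting up the $C$--corridor decomposition of a minimal singular disk diagram over $A\ast_C B$ (disjointness and the linear bound on the number of corridors, the fact that cutting along them yields a tree of diagrams over $A$ and $B$, and that corridor sides may be taken reduced), together with the separating--arc identity $d_G(1,c_j)\le O(|w|)$. That identity is precisely what makes the surgery work: one replaces each corridor word by a $C$--geodesic whose length is controlled by $\Dist^G_C$ evaluated at $O(|w|)$, rather than by the a priori unbounded length of the corridor itself. (A direct reduction of $w$ via Britton's Lemma does \emph{not} give the stated bound, since ``pinching'' a syllable that lies in $C$ can lengthen neighbouring syllables, and iterating this in a nested configuration evaluates $\Dist^G_C$ at arguments far larger than $|w|$.)
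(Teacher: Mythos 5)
Your approach matches the paper's in all essential respects: build the standard two--complex for $A\ast_C B$, decompose a filling disk by a family of disjoint $C$--corridors, use a separating--arc argument to bound the $C$--geodesic length of each corridor side by $\Dist_C^G$ evaluated at $O(|w|)$, and fill the complementary regions over $A$ or $B$. The one genuine mechanical difference is how the corridor pattern is obtained. You take a minimal--area combinatorial filling and surgically replace the corridor sides; the paper instead takes an arbitrary \emph{continuous} filling $g:D^2\to X$, homotopes it transverse to the midlines, reads off only the resulting arc pattern (discarding the map and any circle components outright), and then builds a fresh combinatorial filling with geodesic corridors laid along the arcs. The paper's route neatly sidesteps the assertion ``by minimality there are no circles,'' which as you have stated it is not justified: removing an innermost circle by refilling the subdisk it bounds over the other factor eliminates at least one corridor cell, but this surgery need not decrease total area, so plain area--minimality does not suffice (minimizing the number of corridor cells before area would do it). You did flag the corridor decomposition as the main obstacle, so you are aware that something there needs care. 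The separating--arc estimate $d_G(1,c_j)\le|\lambda_j|\le O(|w|)$ that you make explicit is exactly the right point and is left implicit in the paper's ``the length of each of the $e$--corridors is bounded above by $\Dist^G_C(x)$,'' so spelling it out is a small improvement; and your closing remark that Britton--pinching does not give the bound directly correctly identifies why the corridor picture is needed.
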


\begin{proof}
Pick finite generating sets $S_A$, $S_B$, and $S_C$ for the groups $A$, $B$, and $C$ respectively with the property that the image of $S_C$ in $A$ is contained in $S_A$ and the image of $S_C$ in $B$ is contained in $S_B$. There are finite presentations $\langle S_A \, |\, R_A\rangle$ of $A$ and $\langle S_B \, |\, R_B\rangle$ of $B$ and a possibly infinite presentation $\langle S_C \, |\, R_C\rangle$ of $C$. 
Let $P_A$, $P_B$, $P_C$ denote the corresponding (combinatorial) presentation 2--complexes. There are combinatorial maps $f_A: P_C \to P_A$ inducing the inclusion $C \to A$ and $f_B: P_C \to P_B$ inducing the inclusion $C \to B$. 

The space 
$$
[P_A \; \sqcup \; (P_C \times [0,1]) \; \sqcup P_B]/\sim
$$
where $\sim$ is generated by $(x,0) \sim f_A(x)$ and $(x,1) \sim f_B(x)$ for all $x \in P_C$ is a 3--dimensional cell complex with fundamental group $A\ast_CB$. This complex has finite 2--skeleton. The 1--skeleton is the union of a wedge of $|S_A|$ circles and a wedge of $|S_B|$ circles whose base vertices are connected by an edge (the image of the base vertex of $P_C$ times $[0,1]$). We denote this edge by $e$ and orient it from 0 to 1. The 2--cells are indexed by the disjoint union of the sets $R_A$, $R_B$, and $S_C$; the last set indexing square 2--cells with boundary $\bar{c}ec\bar{e}$ for $c \in S_C$. Therefore, the 2--skeleton of the universal cover is a geometric model for $A\ast_CB$. Denote this 2--skeleton by $X$. 

We need to bound the area of an edgepath loop in $X$ as a function of its length. 
Let $\gamma$ be a combinatorial edgepath loop in $X$. A positively oriented edge $e$ in $\gamma$ corresponds to the path leaving a copy of $\widetilde{P}_A$ in $X$ and entering a copy of $\widetilde{P}_B$. The path $\gamma$ will travel in the 1--skeleton of this copy of $\widetilde{P}_B$ and then exit along a (possibly different) coset of $C$ via an edge labelled $\overline{e}$. Similarly, an edge $\overline{e}$ in $\gamma$ is followed by an edge $e$. Therefore, the orientations of the edges labelled $e$ alternate along $\gamma$. We match the $e$ and $\bar{e}$ edges of $\gamma$ in pairs as follows. 

One can think of $\gamma$ as a combinatorial map $\gamma:S^1 \to X$ for some cell structure on $S^1$; that is, a cellular map which sends open 1--cells of $S^1$ to open 1--cells of $X$. Pull back the edge labels in $X$ to give a labelling of the edges of the domain $S^1$ of 
$\gamma$.  Since $X$ is simply-connected, there is a continuous map $g: D^2 \to X$ such that $g|_{\partial D^2} = \gamma$. Homotope $g$ rel boundary so that it is transverse to each of  the spaces $\widetilde{P}_C^{(1)} \times \{\frac{1}{2}\}$ in $X$. The preimage of the union of these spaces is a 1--submanifold of $D^2$; that is, a collection of embedded circles and embedded arcs meeting $\partial D^2 = S^1$ transversely in the middle of the edges with labels $e$ or $\bar{e}$. These arcs provide the desired matching of $e$ and $\bar{e}$ edges of $S^1$. 

We use the disk $D^2$ with boundary $S^1$ subdivided into $|\gamma|$ 1--cells with labels pulled back from $X$ together with the collection of arcs connecting the midpoints of $e$ and $\bar{e}$ edges in pairs as a template for building a singular disk diagram (van Kampen diagram) for the loop $\gamma$. We construct (or fill in) the singular disk diagram in two steps as follows.

First, each arc connecting an $e$--$\bar{e}$ pair is mapped into a space $\widetilde{P}_C^{(1)} \times \{\frac{1}{2}\}$ in $X$, the endpoints of the arc sent to vertices of this space. Choose a geodesic edgepath $\lambda$ in $\widetilde{P}_C^{(1)} \times \{\frac{1}{2}\}$ connecting these vertices.  The labels on the edges of the geodesic $\lambda$ are from $S_C$. Now connect the $e$--$\bar{e}$ pair by the strip of 2--cells $\lambda \times e$. This forms an $e$--corridor connecting the pair of edges $e$--$\bar{e}$ of $S^1$. 

Second, note that the union of the $e$--corridors divides the disk $D^2$ into a number of complimentary regions. The number of such regions is one more than the number of $e$--corridors in the first step. This is bounded above by $|\gamma|$. Each region $R_i$ has boundary $\partial R_i$ which maps as a combinatorial edgepath loop into one of the lifts of the spaces ${P}_A$ or ${P}_B$ in $X$. Fill this loop $\partial R_i$ with a singular disk diagram over the corresponding presentation for $A$ or $B$. Glue each such singular disk diagram to the union of $\partial D^2$ and the $e$--corridors by identifying its boundary with $\partial R_i$. The result is a singular disk filling $\Delta$ for $\gamma$. 

Now we estimate the area of $\Delta$ as a function of the length $|\gamma| = x$. All estimates and bounds in the following argument will be up to equivalence. First note that the length of each of the $e$--corridors is bounded above by $\Dist_C^G(x)$. Therefore, the length of the boundary of a complimentary region is bounded above by $$
x + x\Dist_C^G(x) \; \leq \; 2x\Dist_C^G(x)\; \preceq \; x\Dist_C^G(2x)
$$
where the last inequality holds by the fact that $\Dist_C^G$ is equivalent to a super additive function. Therefore, the area of this region is bounded above by 
$$ 
\max\{\delta_A(x\Dist_C^G(2x)), \delta_B(x\Dist_C^G(2x))\}.
$$ 
Finally, there are at most $x$ regions in $\Delta$ and therefore the total area of $\Delta$ is bounded above by the sum of the areas of the $e$--corridors and the areas of the complimentary regions 
$$
x\Dist_C^G(x) + x\max\{\delta_A(x\Dist_C^G(2x)), \delta_B(x\Dist_C^G(2x))\}. 
$$
This is bounded above by 
$2x\max\{\delta_A(x\Dist_C^G(2x)), \delta_B(x\Dist_C^G(2x))\}$ and so the proposition follows from the definition of $\preceq$. \end{proof}

\begin{rem}[application]\label{coolcool} Our primary application of Proposition~\ref{polydehn} involves a situation where $C$ is a retract of $A$. In this case $B$ is a retract of 
 $A\ast_CB$ and so $B$ is undistorted in $A\ast_CB$. Therefore, the distortion of $C$ in $A\ast_CB$ is equivalent to the distortion of $C$ in $B$.
 
 In our application we know that the distortion of $C$ in $B$, the Dehn function of $A$, and the Dehn function of $B$ are all bounded above by polynomial functions. Proposition~\ref{polydehn} implies that the Dehn function of $A\ast_CB$ is also bounded above by a polynomial function. 
\end{rem}

\section{The ambient $\CAT(0)$ groups}

The purpose of this section is to build a $\CAT(0)$ structure for the ambient group $G_n \ast_{H_n}G_n$ of Proposition~\ref{prop:embed0}. 
This structure is built in stages; first chaining together a sequence of $\CAT(0)$ free-by-free groups, then performing a factor-diagonal chaining process (defined in subsection~5.3 below), and finally taking a double. 
The first 2 subsections provide some of the basic building blocks which are used in this construction. 

The goal of subsection~5.1 is to construct 
2--dimensional 
$\CAT(0)$ groups which are isomorphic to $F_\ell \rtimes F_k$ for $k \geq 1$ and which play the role of the groups $(A_i \rtimes B_i)$ in Proposition~\ref{prop:embed0}. This is the content of Theorem~\ref{prop:fbyf}. The third condition in Theorem~\ref{prop:fbyf} ensures that the group $F_k$ is ``ultra-convex" (see Definition~\ref{defn:uc} below) in $F_\ell \rtimes F_k$. This allows us to glue the complex $Y_{k_1}$ for $F_{k_2} \rtimes F_{k_1}$ to the complex $Y_{k_2}$ for $F_{k_3} \rtimes F_{k_2}$ by isometrically identifying the roses $R_{k_2}$ in each space and obtain a non-positively curved result.

In subsection 5.2 we give examples of particular free-by-cyclic groups which play the role of the group $H_0$ in Proposition~\ref{prop:embed0} in various applications.

In subsection 5.3 we use a factor-diagonal chaining construction to combine the spaces obtained in subsections~5.1 and~5.2 together with a non-positively curved spaces corresponding to the terminal vertex group $T$ in Figure~\ref{fig:gog0} in order to build non-positively curved spaces corresponding to the groups $G_n$ and the double $G_n \ast_{H_n}G_n$ of Proposition~\ref{prop:embed0}. 


\subsection{Building 2--dimensional, hyperbolic, $\CAT(0)$ $F_\ell \rtimes F_k$ groups.} We start with a definition. 

\begin{defn}\label{defn:uc}
Let $X$ be a non-positively curved, piecewise euclidean 2--complex. A 1--dimensional subcomplex $Y \subseteq X$ is \emph{ultra-convex} if for each 0--cell $v \in Y$ the set of points of $Lk(v,Y)$ are mutually at least $2\pi$ apart in $Lk(v,X)$.

In particular, the free group $\pi_1(Y)$ injects into $\pi_1(X)$. We say that this subgroup is \emph{ultra-convex} in $\pi_1(X)$ 
\end{defn}






Here is the main result of this subsection. 

\begin{thm}[$\CAT(0)$ hyperbolic $(F_\ell \rtimes F_k)$ with ultra-convex $F_k$]\label{prop:fbyf}
Let $k\geq 1$ be an integer. There exists an integer $\ell>k$ and a connected, non-positively curved, 2--dimensional, piecewise euclidean cell complex $Y_k$ whose fundamental group is $(F_\ell \rtimes F_k)$ with the following properties. 
\begin{enumerate}
    \item $Y_k$ has one 0--cell, $v$, and each loop in the link, $Lk(v,Y_k)$, has length strictly greater than $2\pi$. 
    \item The subgroup $F_\ell$, resp.\ $F_k$, is the fundamental group of a rose $R_\ell$, resp.\ $R_k$, in $Y_k^{(1)}$ based at $v$. In the case $k=1$, the rose $R_1$ is a circle and $F_1 = \Z$.
    \item The rose $R_k \subseteq Y_k$ is ultra-convex in $Y_k$. 
\end{enumerate}
\end{thm}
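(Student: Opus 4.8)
The plan is to build $Y_k$ explicitly as a piecewise euclidean $2$--complex assembled from squares and right-angled triangles, where the underlying combinatorial structure realizes a mapping-torus-like presentation of $F_\ell \rtimes F_k$, and then to verify the link condition of Theorem~\ref{lkcondition} together with the sharper angle estimates needed for ultra-convexity. First I would treat the base case $k=1$: here $F_1 = \Z$ acts on $F_\ell$ via a single automorphism $\phi \in \operatorname{Aut}(F_\ell)$, and the group is the free-by-cyclic group $F_\ell \rtimes_\phi \Z$. I would choose $\phi$ to be a positive automorphism (sending each generator to a positive word, and arranged so that the associated graph map is a train-track map) so that the standard mapping torus $2$--complex can be given a piecewise euclidean metric — rectangles of varying widths for the ``suspension'' squares and, where a single generator maps to a long positive word, subdividing using euclidean triangles whose angles can be tuned. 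The key point is to select $\ell$ large enough and the image words long enough that at the unique vertex $v$ every injective link loop has length \emph{strictly} greater than $2\pi$ with a definite gap; concretely the link is a graph on the $\pm$ ends of the $R_\ell$--edges and the $R_1$--edge, and the edge lengths are the corner angles, which we keep close to $\pi/2$ or smaller, so short cycles have length $\geq 2\pi + \epsilon$. Hyperbolicity of $\pi_1(Y_k)$ then follows because a free-by-cyclic group is hyperbolic iff it has no $\Z^2$ subgroup (Brinkmann), which is arranged by taking $\phi$ atoroidal; alternatively one argues directly that the $2$--complex has no flat planes.

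Next I would handle $k \geq 2$ by iterating or by a direct product-like amalgamation: take $k$ commuting-up-to-inner automorphisms, or more simply realize $F_\ell \rtimes F_k$ as an iterated mapping torus, adding one suspension coordinate at a time, at each stage enlarging $\ell$ and lengthening the defining positive words so that the link-length gap is preserved. Conclusion (2) is then immediate from the construction: the $F_\ell$ letters label the ``fiber'' edges forming a rose $R_\ell$, and the $F_k$ letters label the ``suspension'' edges forming a rose $R_k$, both wedged at $v$; these are the obvious free subgroups and they inject because $Y_k$ is non-positively curved and the roses are $\pi_1$--injective subcomplexes (for $R_\ell$ this is normality in the semidirect product; for $R_k$ it follows from ultra-convexity, once established, via Definition~\ref{defn:uc}).

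The heart of the argument — and the main obstacle — is conclusion (3), ultra-convexity of $R_k$: I must arrange that for the vertex $v$, any two points of $\operatorname{Lk}(v, R_k)$ (these are the $2k$ endpoints of the $R_k$--edge germs) are at distance at least $2\pi$ from each other in $\operatorname{Lk}(v, Y_k)$. This is much stronger than the bare link condition, since it forbids \emph{any} short path between two suspension-edge germs through the link, even ones that are not injective loops. The strategy is to design the attaching maps of the $2$--cells so that every $R_\ell$--edge germ is ``surrounded'' in the link by long chains of $R_k$--edge germs and fiber germs — i.e., no $2$--cell has two corners both at $R_k$--germs close together — which forces any link path between two $R_k$--germs to traverse many edges, each contributing an angle bounded below, summing past $2\pi$. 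Concretely this amounts to choosing the positive automorphisms so that each suspension generator, when it appears in a $2$--cell relator $s_j w = \phi_j(w) s_j$, is separated from the next occurrence of any suspension letter by a long positive subword of fiber generators; subdividing those long subwords into many thin euclidean triangles gives many link edges of angle bounded below by some fixed $\theta_0 > 0$, and taking the words long enough (hence $\ell$ large enough) makes $\lceil 2\pi/\theta_0\rceil$ such edges available on every side. I would carry this out by first writing down the link graph $\operatorname{Lk}(v,Y_k)$ combinatorially as a function of the chosen automorphisms, identifying the ``distance budget'' between $R_k$--germs as a shortest-path computation, and then reverse-engineering the word lengths and triangle angles to make that budget exceed $2\pi$; the bare link condition ($> 2\pi$ for injective loops) is then a byproduct of the same estimates. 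The delicate bookkeeping is ensuring simultaneously that (i) the metric is genuinely piecewise euclidean with the stated corner angles (no degenerate triangles), (ii) the complex remains $2$--dimensional with one vertex, and (iii) hyperbolicity is not destroyed by the lengthening, which I would control by keeping the suspension monodromy atoroidal throughout the iteration.
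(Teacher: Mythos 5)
Your proposal takes a fundamentally different route from the paper, and the route you take has a structural obstruction that blocks ultra-convexity. In a standard mapping-torus presentation $\langle x_1,\ldots,x_\ell, t \mid t x_i t^{-1} = \phi(x_i)\rangle$ (and likewise for $k\geq 2$), the link vertex $t^{\text{in}}$ is joined by a link edge to $x_i^{\text{out}}$ and to $x_i^{\text{in}}$ for \emph{every} $i$, from the two corners of the $i$th relator polygon incident to the $t$--edge. At the same time $t^{\text{out}}$ is joined by a link edge to $x_m^{\text{out}}$ where $x_m$ is the first letter of $\phi(x_i)$ (and to $x_{m'}^{\text{in}}$ where $x_{m'}$ is the last letter). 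So $t^{\text{in}}$ and $t^{\text{out}}$ always share a common neighbor, and since corner angles of convex euclidean polygons are strictly less than $\pi$, the link distance between the two germs of the suspension edge is always strictly less than $2\pi$, no matter how you lengthen $\phi$ or tune the angles. Making $\phi(x_i)$ longer only adds \emph{fiber-to-fiber} link edges; it does not increase the length of the two corner angles flanking the $t$--edge in each $2$--cell. Thus the "reverse-engineer the angles" step cannot succeed, and this is the heart of the theorem, not a bookkeeping issue.

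The paper circumvents this in two essential ways that your proposal omits. First, it does not use a presentation $2$--complex with relator polygons $s_j x_i s_j^{-1}\phi_j(x_i)^{-1}$; instead $X_k$ is the total space of a graph of graphs over a rose $\Theta_k$, all built from equilateral triangles, so that the (vertical) edges destined to become $R_k$ have both link germs at combinatorial distance at least $6$ (Lemma~\ref{lem:distance}) because they are never flanked in a single triangle by the same fiber germ. Second, and crucially, the resulting $X_k$ does \emph{not} satisfy the link condition (it has $4$-- and $6$--cycles), and non-positive curvature plus the ultra-convex $2\pi$--separation between the germs of $R_k$ for different $j$ are achieved only after passing to a finite cyclic \emph{branched cover} $\widehat{X}_k \to X_k$, with the cover degree $N$ chosen so that all short cycles unwind and a carefully selected collection of lifts $n_{1,j}^{((M+1)j)}$ is mutually $2\pi$--separated. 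Your outline has no analogue of this branched-cover step, and without it there is no mechanism to push the $R_k$--germs far apart. (For $k\geq 2$ the paper also builds the complex all at once over a rose with $k$ petals rather than iterating mapping tori, which is the cleaner way to obtain the $F_\ell\rtimes F_k$ structure via the rose-valued Morse function of \cite{MR1898153}; an iterated mapping torus produces $((F_\ell\rtimes\Z)\rtimes\Z)\rtimes\cdots$, which is not of the required form unless you do extra work.)
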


The proof (which occupies the rest of this subsection) follows the general outline of \cite{MR1898153} where one takes a suitable branched cover of the total space of a graph of spaces. However, for the purposes of the present paper, one needs to take special care with the initial choice of the graph of spaces. Our choice in Step 1 below facilitates the proof of the ultra-convexity condition and also ensures that the branched cover can be achieved via a single cyclic covering of the complement of the branch points. 

The proof has a number of technical steps. We provide an overview of the these steps for the reader's reference. The numbers in the list below correspond to the numbers of the steps in the proof.
\begin{enumerate}
    \item  We define $X_k$ as the total space of a graph of 1--dimensional spaces whose underlying graph, $\Theta_k$, is a rose of $k$ circles. We define a rose--valued Morse function $X_k \to \Theta_k$. The space $X_k$ is given a piecewise euclidean 2--complex structure. 
    \item The complex $X_k$ has a single 0--cell, $v$, and we describe the link $Lk(v, X_k)$. 
    \item The space $X_k -\{v\}$ retracts onto a spine which is a graph, $D_k$. This retraction induces a graph immersion from the barycentric subdivision of $Lk(X_k, v)$ to $D_k$. 
    \item List all the short loops in $Lk(v,X_k)$ and express these as nontrivial elements in $H_1(D_k)$.
    \item Construct a finite cyclic covering space of $D_k$ in which each of the nontrivial elements in $H_1(D_k)$ listed above has one preimage. The piecewise euclidean metric on $X_k -\{v\}$ lifts to the corresponding finite, cyclic cover of $X_k -\{v\}$. The completion of this metric yields a branched cover $\widehat{X_k} \to X_k$. There is a $k$--leaved rose $R_k$ which is ultra--convex in $\widehat{X_k}$ and which maps isomorphically to $\Theta_k$ via the Morse function. Also,
    $\pi_1(\widehat{X_k}, v)$ is hyperbolic. 
    \item There is a subcomplex $Y_k \subseteq \widehat{X_k}$ which has $\CAT(0)$, hyperbolic fundamental group of the form $F_\ell \rtimes F_k$. The free-by-free structure is established using the Morse function restricted to $Y_k$. 
\end{enumerate}

\medskip

\noindent
\emph{Step 1. The total space $X_k$ of a graph of spaces.} In this step we describe $X_k$ as the total space of a graph of spaces, give $X_k$ the structure of a piecewise euclidean complex, and define a graph-valued Morse function $X_k \to \Theta_k$. 

\smallskip

\noindent
\emph{The underlying graph.}
Let $k \geq 1$ be an integer and let $\Theta_k$ denote a graph with one $0$--cell, $w$, and directed edges labelled by $x_i$ for $1 \leq i \leq k$.

\smallskip

\noindent
\emph{The graph of spaces.} 
Next we define a graph of spaces based on $\Theta_k$. The vertex space $X_w$ is a bouquet $R_8$ of 8 circles. Denote the 0--cell of $R_8$ by $v$ and the directed edges by $a_i$ for $1\leq i \leq 8$. 
Each edge space $X_{x_j}$ is defined to be a graph $O_8$ consisting of a circuit of 8 bigons as shown in Figure~\ref{fig:octagon}. The edges are oriented as shown and labelled by $\alpha_i$ and $\beta_i$ for $1\leq i \leq 8$. The vertices are labelled $n_i$ and $p_i$ for $1\leq i \leq 4$ with oriented edges directed away from the $n$ (negative) vertices towards the $p$ (positive) vertices. 

\begin{figure}[ht]
    \centering
    \begin{tikzpicture}
    
     \tikzstyle{every node}=[font=\scriptsize]
     
    \filldraw[black]  (0.8284, 2) circle (2pt) node[xshift=0.8em, yshift=0.8em]{$n_1$};
    
    \filldraw[black]  (2, 0.8284) circle (2pt) node[xshift=1.2em, yshift=0.2em]{$p_1$};
    
    \filldraw[black]  (2, -0.8284) circle (2pt) node[xshift=1.2em, yshift=-0.2em]{$n_2$};
    
    \filldraw[black]  (0.8284, -2) circle (2pt) node[xshift=0.8em, yshift=-0.8em]{$p_2$};
    
    \filldraw[black]  (-0.8284, -2) circle (2pt) node[xshift=-0.8em, yshift=-0.8em]{$n_3$};
    
    \filldraw[black]  (-2, -0.8284) circle (2pt) node[xshift=-1.2em, yshift=-0.2em]{$p_3$};
    
    \filldraw[black]  (-2, 0.8284) circle (2pt) node[xshift=-1.2em, yshift=0.2em]{$n_4$};
    
    \filldraw[black]  (-0.8284, 2) circle (2pt) node[xshift=-0.8em, yshift=0.8em]{$p_4$};
    
    \tikzset{middlearrow/.style={
        decoration={markings,
            mark= at position 0.5 with {\arrow[#1]{stealth}} ,
        },
        postaction={decorate}
    }
}

    
    \draw[thick, middlearrow] (0.8284, 2) .. controls (0.8*1.4142, 0.8*1.4142) .. (2, 0.8284) node[midway,below, left]{$\beta_1$}; 
    
     \draw[thick, middlearrow] (0.8284, 2) .. controls (1.2*1.4142, 1.2*1.4142) .. (2, 0.8284) node[midway,above right]{$\alpha_1$};
    
    \draw[thick, middlearrow] (-0.8284, -2) .. controls (-0.8*1.4142, -0.8*1.4142) .. (-2, -0.8284) node[midway,above right]{$\beta_5$}; 
    
     \draw[thick, middlearrow] (-0.8284, -2) .. controls (-1.2*1.4142, -1.2*1.4142) .. (-2, -0.8284) node[midway,below left]{$\alpha_5$};
    
    \draw[thick, middlearrow] (2, -0.8284) .. controls (0.8*1.4142, -0.8*1.4142) .. (0.8284, -2) node[midway,above left]{$\beta_3$}; 
    
     \draw[thick, middlearrow] (2, -0.8284) .. controls (1.2*1.4142, -1.2*1.4142) .. (0.8284, -2) node[midway,below right]{$\alpha_3$}; 
    
    
    \draw[thick, middlearrow] (-2, 0.8284) .. controls (-0.8*1.4142, 0.8*1.4142) .. (-0.8284, 2) node[midway,below right]{$\beta_7$}; 
    
     \draw[thick, middlearrow] (-2, 0.8284) .. controls (-1.2*1.4142, 1.2*1.4142) .. (-0.8284, 2) node[midway,above left]{$\alpha_7$}; 
    
      \draw[thick, middlearrow] (0.8284, 2) .. controls (0.8*0, 0.8*2) .. (-0.8284, 2) node[midway,below]{$\beta_8$}; 
    
     \draw[thick, middlearrow] (0.8284, 2) .. controls (1.2*0, 1.2*2) .. (-0.8284, 2) node[midway,above]{$\alpha_8$};
    
     \draw[thick, middlearrow] (-0.8284, -2) .. controls (-0.8*0, -0.8*2) .. (0.8284, -2) node[midway,above]{$\beta_4$}; 
    
     \draw[thick, middlearrow] (-0.8284, -2) .. controls (-1.2*0, -1.2*2) .. (0.8284, -2) node[midway,below]{$\alpha_4$};
    
    
      \draw[thick, middlearrow] (2, -0.8284) .. controls (0.8*2, 0.8*0) .. (2, 0.8284) node[midway,left]{$\beta_2$}; 
    
     \draw[thick, middlearrow] (2, -0.8284) .. controls (1.2*2, 1.2*0) .. (2, 0.8284) node[midway,right]{$\alpha_2$};
    
    
      \draw[thick, middlearrow] (-2, 0.8284) .. controls (-0.8*2, 0.8*0) .. (-2, -0.8284) node[midway,right]{$\beta_6$}; 
    
     \draw[thick, middlearrow] (-2, 0.8284) .. controls (-1.2*2, 1.2*0) .. (-2, -0.8284) node[midway,left]{$\alpha_6$};
    
    \end{tikzpicture}

    \caption{The edge space $X_{x_j} = O_8$ for each $1 \leq j \leq k$.}
    \label{fig:octagon}
\end{figure}

The map $f_1: X_{x_i} \to X_w$ at the terminal end of each edge $x_i$ is given by collapsing the $\beta$--circuit to $v$ and mapping each $\alpha_i$ to $a_i$ for $1 \leq i \leq 8$. 

The map $f_0: X_{x_i} \to X_w$ at the initial end of each edge $x_i$ is given by collapsing the $\alpha$--circuit to $v$ and mapping each $\beta_i$ to $a_{i+4}$ for $1 \leq i \leq 8$ where indices are taken modulo 8 and where we use the digit 8 in place of 0. 

\smallskip

\noindent
\emph{The total space $X_k$.} Recall from (see Scott-Wall~\cite{MR564422}) that the total space $X_k$ of this graph of spaces is defined as the quotient 
$$
X_k \; =\; (X_w \, \sqcup \, \bigsqcup_{1 \leq j \leq k} X_{x_j} \times 
[0,1])/ \sim 
$$
where the equivalence relation is generated by $(x,0) \sim f_0(x)$ and $(x,1) \sim f_1(x)$ for all $x \in X_{x_j}$. We now describe the cell structure of $X_k$. 

Since $X_w = R_8$ has one vertex, $v$, and the underlying graph $\Theta_k$ has one vertex, the complex $X_k$ has only one 0--cell. We denote this 0--cell by $v$. 

Each vertex $n_i$ (resp.\ $p_i$) of $O_8$ determines an oriented edge $n_i \times [0,1]$ (resp.\ $p_i \times [0,1]$) in each $X_{x_j} \times [0,1]$ and hence in $X_k$. We label these edges by $n_{i,j}$ (resp.\ $p_{i,j}$). There are $4k$ of each of these types of edges. The oriented edges $a_i$ for $1 \leq i \leq 8$ of the vertex space $X_w$ also contribute to the 1-skeleton of $X_k$. Therefore, $X_k^{(1)}$ is a bouquet of $8k+8$ oriented edges at the vertex $v$; namely, $p_{i,j}$ and ${n_i,j}$ for $1 \leq i \leq 4$ and $1 \leq j \leq k$ and the edges $a_1, \ldots, a_8$. 

Each edge $\alpha_i$ of $O_8$ determines a 2--cell $(\alpha_i \times [0,1])$ of $X_{x_j} \times [0,1]$. Because $f_0$ collapses $\alpha_i$ to $v \in X_w$, this 2--cell factors through a 2--simplex $(\alpha_i \times [0,1])/ (x,0)\sim (x',0)$ in $X_k$ for each $1 \leq j \leq k$. We denote these 2--simplices by $\alpha_{i,j}$. Likewise, each edge $\beta_i$ of $O_8$ determines $k$ 2--simplices of $X_k$ denoted by $\beta_{i,j}$ for $1 \leq j \leq k$. Figure~\ref{fig:simplices} shows how these simplices glue together in cycles of length 8.

\begin{figure}[ht]
    \centering
    \begin{tikzpicture} 
    
     \tikzstyle{every node}=[font=\scriptsize]
    
    \tikzset{middlearrow/.style={
        decoration={markings,  
            mark= at position 0.5 with {\arrow[#1]{stealth}} ,
        },
        postaction={decorate}
    }
    }
    
    \draw[thick, middlearrow] (0.8284, 2)--(2, 0.8284)  node[midway,above right]{$a_1$};
    
    \draw[thick, middlearrow] (2, -0.8284)--(2, 0.8284)  node[midway,right]{$a_2$};
    
    \draw[thick, middlearrow] (2, -0.8284)--(0.8284, -2)  node[midway,below right]{$a_3$};
    
    \draw[thick, middlearrow] (-0.8284, -2)--(0.8284, -2)  node[midway,below]{$a_4$};
    
    \draw[thick, middlearrow] (-0.8284, -2)--(-2, -0.8284)  node[midway,below left]{$a_5$};
    
    \draw[thick, middlearrow] (-2, 0.8284)--(-2, -0.8284)  node[midway,left]{$a_6$};
    
    \draw[thick, middlearrow] (-2, 0.8284)--(-0.8284, 2)  node[midway,above left]{$a_7$};
    
    \draw[thick, middlearrow] (0.8284, 2)--(-0.8284, 2)  node[midway,above]{$a_8$};

    \draw[thick, middlearrow] (8+0.8284, 2)--(8+2, 0.8284)  node[midway,above right]{$a_5$};
    
    \draw[thick, middlearrow] (8+2, -0.8284)--(8+2, 0.8284)  node[midway,right]{$a_6$};
    
    \draw[thick, middlearrow] (8+2, -0.8284)--(8+0.8284, -2)  node[midway,below right]{$a_7$};
    
    \draw[thick, middlearrow] (8-0.8284, -2)--(8+0.8284, -2)  node[midway,below]{$a_8$};
    
    \draw[thick, middlearrow] (8-0.8284, -2)--(8-2, -0.8284)  node[midway,below left]{$a_1$};
    
    \draw[thick, middlearrow] (8-2, 0.8284)--(8-2, -0.8284)  node[midway,left]{$a_2$};
    
    \draw[thick, middlearrow] (8-2, 0.8284)--(8-0.8284, 2)  node[midway,above left]{$a_3$};
    
    \draw[thick, middlearrow] (8+0.8284, 2)--(8-0.8284, 2)  node[midway,above]{$a_4$};

    
     \draw[thick, middlearrow] (0,0)--(0.8284,2)  node[midway,below, xshift=0.5em]{$n_{1}$};
    
    \draw[thick, middlearrow] (0,0)--(2, 0.8284)  node[midway,below]{$p_{1}$};
    
    \draw[thick, middlearrow] (0,0)--(2, -0.8284)  node[midway,below]{$n_{2}$};
    
    \draw[thick, middlearrow] (0,0)--(0.8284,-2)  node[midway,below right]{$p_{2}$};
    
    \draw[thick, middlearrow] (0,0)--(-0.8284,-2)  node[midway, below left]{$n_{3}$};
    
    \draw[thick, middlearrow] (0,0)--(-2, -0.8284)  node[midway,below]{$p_{3}$};
    
    \draw[thick, middlearrow] (0,0)--(-2, 0.8284)  node[midway,below]{$n_{4}$};
    
    \draw[thick, middlearrow] (0,0)--(-0.8284,2)  node[midway,below, xshift=-0.4em]{$p_{4}$};

    
     \draw[thick, middlearrow] (8+0.8284,2)--(8,0)  node[midway,below, xshift=0.5em]{$n_{1}$};
    
    \draw[thick, middlearrow] 
    (8+2, 0.8284)--(8,0)  node[midway,below]{$p_{1}$};
    
    \draw[thick, middlearrow] 
    (8+2, -0.8284)--(8,0)  node[midway,below]{$n_{2}$};
    
    \draw[thick, middlearrow] (8+0.8284,-2)--(8,0)  node[midway,below right]{$p_{2}$};
    
    \draw[thick, middlearrow] (8-0.8284,-2)--(8,0)  
    node[midway, below left]{$n_{3}$};
    
    \draw[thick, middlearrow] 
    (8-2, -0.8284)--(8,0)  node[midway,below]{$p_{3}$};
    
    \draw[thick, middlearrow] 
    (8-2, 0.8284)--(8,0)  node[midway,below]{$n_{4}$};
    
    \draw[thick, middlearrow] (8-0.8284,2)--(8,0)  node[midway,below, xshift=-0.4em]{$p_{4}$};

    
    \node[] at (0,1.55) {$\alpha_{8}$};
    \node[] at (0,-1.55) {$\alpha_{4}$};
    \node[] at (1.6,0) {$\alpha_{2}$};
    \node[] at (-1.6,0) {$\alpha_{6}$};
    \node[] at (1.15, 1.15) {$\alpha_{1}$};
    \node[] at (1.2, -1.1) {$\alpha_{3}$};
    \node[] at (-1.2, -1.1) {$\alpha_{5}$};
    \node[] at (-1.15, 1.15) {$\alpha_{7}$};
    
    
    \node[] at (8+0,1.55) {$\beta_{8}$};
    \node[] at (8+0,-1.55) {$\beta_{4}$};
    \node[] at (8+1.6,0) {$\beta_{2}$};
    \node[] at (8-1.6,0) {$\beta_{6}$};
    \node[] at (8+1.15, 1.15) {$\beta_{1}$};
    \node[] at (8+1.2, -1.1) {$\beta_{3}$};
    \node[] at (8-1.2, -1.1) {$\beta_{5}$};
    \node[] at (8-1.15, 1.15) {$\beta_{7}$};
    
    \end{tikzpicture}
    \caption{A template for the $16k$ 2--simplices of $X_k$. The actual 2--simplices are labelled by $\alpha_i \to \alpha_{i,j}$ and $\beta_i \to \beta_{i,j}$, and the corresponding 1--simplices are labelled by 
    $n_i \to n_{i,j}$, $p_i \to p_{i,j}$, and $a_i \to a_i$ for $1 \leq j \leq k$. }
    \label{fig:simplices}
\end{figure}

\smallskip

\noindent
\emph{The piecewise euclidean structure of $X_k$.} We give $X_k$ a piecewise euclidean structure in which each 2--cell is a regular euclidean triangle of side length 1.

\smallskip

\noindent
\emph{The Morse function $X_k \to \Theta_k$.} As in \cite{MR1898153} we define a rose-valued Morse function $X_k \to \Theta_k$ which sends the bouquet of 8 edges labelled $a_i$ to $w$, sends each $n_{i,j}$ and each $p_{i,j}$ to $x_j$ for $1 \leq j \leq k$, and which extends linearly over the 2--simplices. In the case $k=1$ this is a standard circle-valued Morse function.

\medskip

\noindent
\emph{Step 2. The link $Lk(X_k, v)$.} One can use the piecewise euclidean metric on $X_k$ and define $Lk(v, X_k)$ to be the boundary of the closed metric ball of radius $1/4$ about $v$ in $X_k$. Each 1-cell $e$ of $X_k$ contributes two vertices $e^+$ and $e^-$ to $Lk(v, X_k)$; the edge $e$ is oriented from $e^-$ towards $e^+$. A pair of adjacent edges in a 2--cell of $X_k$ contributes an edge to $Lk(v,X_k)$.

\begin{figure}[ht]
\centering 
\begin{tikzpicture}

    
     \tikzstyle{every node}=[font=\scriptsize]
    
\begin{knot}[consider self intersections,draft mode=off]
\strand (-1,3.4) -- (1,3.4);
\strand (1,3.4) -- (-1,2.6);
\strand (-1,2.6) -- (1,2.6);
\strand (1,2.6) -- (-1,1.8);
\strand (-1,1.8) -- (1,1.8);
\strand (1,1.8) -- (-1,1);
\strand (-1,1) -- (1,1);
\strand (1,1) -- (-1, 3.4);
\end{knot}

\filldraw[black] (-1,1) circle (2pt) node[xshift=0em, yshift=-0.8em]{$n^+_4$};

\filldraw[black] (-1,1.8) circle (2pt) node[xshift=0em, yshift=-0.8em]{$n^+_3$};

\filldraw[black] (-1,2.6) circle (2pt) node[xshift=0em, yshift=0.8em]{$n^+_2$};

\filldraw[black] (-1,3.4) circle (2pt) node[xshift=0em, yshift=0.8em]{$n^+_1$};

\filldraw[black] (1,1) circle (2pt) node[xshift=0em, yshift=-0.8em]{$p^+_4$};

\filldraw[black] (1,1.8) circle (2pt) node[xshift=0em, yshift=-0.8em]{$p^+_3$};

\filldraw[black] (1,2.6) circle (2pt) node[xshift=0em, yshift=0.8em]{$p^+_2$};

\filldraw[black] (1,3.4) circle (2pt) node[xshift=0em, yshift=0.8em]{$p^+_1$};

\begin{knot}[consider self intersections,draft mode=off]
\strand (-1,-1) -- (1,-1);
\strand (1,-1) -- (-1, -1.8);
\strand (-1,-1.8) -- (1,-1.8);
\strand (1, -1.8) -- (-1, -2.6);
\strand (-1, -2.6) -- (1, -2.6);
\strand (0.99,-2.6) -- (-1,-3.4);
\strand (-1,-3.4) -- (1,-3.4);
\strand (1,-3.4) -- (-1,-1);
\end{knot}

\filldraw[black] (-1,-1) circle (2pt) node[xshift=0em, yshift=0.8em]{$n^-_3$};

\filldraw[black] (-1,-1.8) circle (2pt) node[xshift=0em, yshift=0.8em]{$n^-_4$};

\filldraw[black] (-1,-2.6) circle (2pt) node[xshift=0em, yshift=-0.8em]{$n^-_1$};

\filldraw[black] (-1,-3.4) circle (2pt) node[xshift=0em, yshift=-0.8em]{$n^-_2$};

\filldraw[black] (1,-1) circle (2pt) node[xshift=0em, yshift=0.8em]{$p^-_3$};

\filldraw[black] (1,-1.8) circle (2pt) node[xshift=0em, yshift=0.8em]{$p^-_4$};

\filldraw[black] (1,-2.6) circle (2pt) node[xshift=0em, yshift=-0.8em]{$p^-_1$};

\filldraw[black] (1,-3.4) circle (2pt) node[xshift=0em, yshift=-0.8em]{$p^-_2$};


\filldraw[black] (-5,2.8) circle (2pt) node[xshift=-0.9em, yshift=0.1em]{$a_8^-$};

\filldraw[black] (-5,2.0) circle (2pt) node[xshift=-0.9em, yshift=0.1em]{$a_1^-$};

\filldraw[black] (-5,1.2) circle (2pt) node[xshift=-0.9em, yshift=0.1em]{$a_2^-$};

\filldraw[black] (-5,0.4) circle (2pt) node[xshift=-0.9em, yshift=0.1em]{$a_3^-$};

\filldraw[black] (-5,-0.4) circle (2pt) node[xshift=-0.9em, yshift=0.1em]{$a_4^-$};

\filldraw[black] (-5,-1.2) circle (2pt) node[xshift=-0.9em, yshift=0.1em]{$a_5^-$};

\filldraw[black] (-5,-2.0) circle (2pt) node[xshift=-0.9em, yshift=0.1em]{$a_6^-$};

\filldraw[black] (-5,-2.8) circle (2pt) node[xshift=-0.9em, yshift=0.1em]{$a_7^-$};

\begin{knot}[consider self intersections,draft mode=off]
\strand (-5,2.8) -- (-1,3.4);
\strand (-5,2.8) -- (-1, -1);
\strand (-5,2.0) -- (-1,3.4);
\strand (-5, 2.0) -- (-1, -1);
\strand (-5, 1.2) -- (-1, 2.6);
\strand (-5, 1.2) -- (-1, -1.8);  
\strand (-5,0.4) -- (-1,2.6);
\strand (-5,0.4) -- (-1,-1.8);
\strand (-5,-0.4) -- (-1,1.8);
\strand (-5,-0.4) -- (-1,-2.6);
\strand (-5,-1.2) -- (-1,1.8);
\strand (-5,-1.2) -- (-1,-2.6);
\strand (-5,-2.0) -- (-1,1);
\strand (-5,-2.0) -- (-1,-3.4);
\strand (-5,-2.8) -- (-1,1);
\strand (-5,-2.8) -- (-1,-3.4);
\end{knot}


\filldraw[black] (5,2.8) circle (2pt) node[xshift=0.9em, yshift=0.1em]{$a_1^+$};

\filldraw[black] (5,2.0) circle (2pt) node[xshift=0.9em, yshift=0.1em]{$a_2^+$};

\filldraw[black] (5,1.2) circle (2pt) node[xshift=0.9em, yshift=0.1em]{$a_3^+$};

\filldraw[black] (5,0.4) circle (2pt) node[xshift=0.9em, yshift=0.1em]{$a_4^+$};

\filldraw[black] (5,-0.4) circle (2pt) node[xshift=0.9em, yshift=0.1em]{$a_5^+$};

\filldraw[black] (5,-1.2) circle (2pt) node[xshift=0.9em, yshift=0.1em]{$a_6^+$};

\filldraw[black] (5,-2.0) circle (2pt) node[xshift=0.9em, yshift=0.1em]{$a_7^+$};

\filldraw[black] (5,-2.8) circle (2pt) node[xshift=0.9em, yshift=0.1em]{$a_8^+$};

\begin{knot}[consider self intersections,draft mode=off]
\strand (5,2.8) -- (1,3.4);
\strand (5,2.8) -- (1, -1);
\strand (5,2.0) -- (1,3.4);
\strand (5, 2.0) -- (1, -1);
\strand (5, 1.2) -- (1, 2.6);
\strand (5, 1.2) -- (1, -1.8);  
\strand (5,0.4) -- (1,2.6);
\strand (5,0.4) -- (1,-1.8);
\strand (5,-0.4) -- (1,1.8);
\strand (5,-0.4) -- (1,-2.6);
\strand (5,-1.2) -- (1,1.8);
\strand (5,-1.2) -- (1,-2.6);
\strand (5,-2.0) -- (1,1);
\strand (5,-2.0) -- (1,-3.4);
\strand (5,-2.8) -- (1,1);
\strand (5,-2.8) -- (1,-3.4);
\end{knot}

\end{tikzpicture}
\caption{Template for the link $Lk(v, X_k)$.} 
\label{fig:link}
\end{figure}
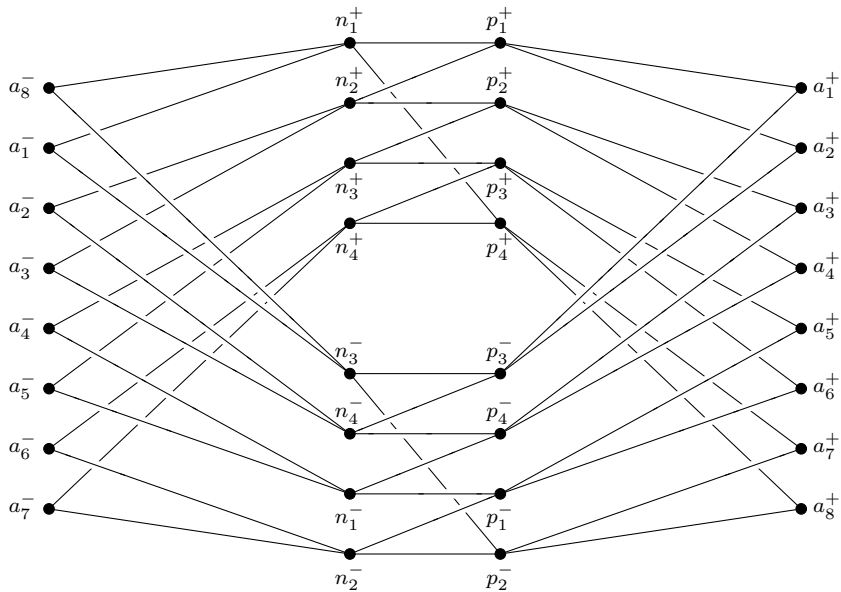

The graph in Figure~\ref{fig:link} is a template for $Lk(v, X_k)$. To obtain the link $Lk(v, X_k)$ first take $k$ copies of the template shown. Next, relabel the vertices in the $j$th copy as follows: $n^\pm_i \to n^\pm_{i,j}$, $p^\pm_i \to p^\pm_{i,j}$ and don't change the labelling of the $a^\pm_i$ vertices. 
Finally, glue these $k$ graphs together along the $a^\pm_i$ vertices. 

The template graph has 32 vertices and 48 edges. Therefore, $Lk(v, X_k)$ obtained from the $k$ copies as above has $16k + 16$ vertices and $48k$ edges. This count agrees with the fact that $X_k$ has $8k + 8$ 1--cells and $16k$ 2--cells since each 1--cell contributes 2 vertices to $LK(v,X_k)$ and each triangular 2--cell contributes 3 edges to $Lk(v, X_k)$. 

Using the terminology and notation of Definition~2.4 and Definition~2.5 in \cite{MR1898153}, the 1--cells $a_i$ (for $1 \leq i \leq 8$) are \emph{horizontal} with respect to the $\Theta_k$--valued Morse function. For each oriented edge $x_j \in \Theta_k$ the \emph{$x_j$--link} of $v$ in $X_k$ is 
given by 
$$
Lk_{x_j}(v, X_k) \; = \; {\hbox{the circle spanned by $n_{i, j}^-$ and $p_{i,j}^-$ for $1 \leq i \leq 4$,}}
$$
and the \emph{$\overline{x}_j$--link} of $v \in X_k$ is 
given by 
$$
Lk_{\overline{x}_j}(v, X_k) \; = \; {\hbox{the circle spanned by $n_{i, j}^+$ and $p_{i,j}^+$ for $1 \leq i \leq 4$.}}
$$
It may help the reader to think of $Lk_{x_j}(v, X_k)$ as the \emph{ascending} (or negative) link of $v$ in the $x_j$--direction and of $Lk_{\overline{x}_j}(v, X_k)$ as the \emph{descending} (or positive) link of $v$ in the $x_j$--direction. 

Note that the links $Lk_{x_j}(a_i, X_k)$ and $Lk_{\overline{x}_j}(a_i, X_k)$ of the horizontal 1--cells $a_i$ are all singletons.

\medskip

\noindent
\emph{Step 3. The spine $D_k$ of $X_k - \{v\}$ and the immersion $Lk(v,X_k)' \to D_k$.} The \emph{spine} of $X_k - \{v\}$ is a graph which we denote by $D_k$ and which is defined as follows. 
The vertices of $D_k$ are the barycenters $\widehat{\alpha}$ of 2-cells $\alpha$ of $X_k$ and the barycenters $\widehat{e}$ of 1--cells $e$ of $X_k$. An $\widehat{\alpha}$ vertex of $D_k$ is adjacent to an $\widehat{e}$ vertex if and only if the 1--cell $e$ is a face of the 2--cell $\alpha$ in $X_k$. 

\begin{figure}[ht]
\centering 
\begin{tikzpicture}

\filldraw[black] (0, 1.154) circle (1pt) node[xshift=0em, yshift=0.8em]{$\widehat{\alpha}$};

\filldraw[black] (0, 0) circle (1pt) node[xshift=0em, yshift=-0.8em]{$\widehat{e}_1$};

\filldraw[black] (1, 1.732) circle (1pt) node[xshift=0.9em, yshift=0.2em]{$\widehat{e}_2$};

\filldraw[black] (-1, 1.732) circle (1pt) node[xshift=-0.9em, yshift=0.2em]{$\widehat{e}_3$};

\draw[thick, black] (0, 3.464) circle (1.3pt); 
\draw[thick, black] (-2, 0) circle (1.3pt);
\draw[thick, black] (2, 0) circle (1.3pt);

\draw[->, thick, dashed, -Stealth]  (1.92,0) -- (0.05, 0);
\draw[->, thick, dashed, -Stealth]  (-1.92,0) -- (-0.05, 0);
\draw[->, thick, dashed, -Stealth]  (2- 0.05*1, 0.05*1.732) -- (2- 0.97*1, 0.97*1.732);
\draw[->, thick, dashed, -Stealth]  (-2+ 0.05*1, 0.05*1.732) -- (-2+ 0.97*1, 0.97*1.732);
\draw[->, thick, dashed, -Stealth]  (0.05*1, 3.464-0.05*1.732) -- (0.97*1, 3.464-0.97*1.732);
\draw[->, thick, dashed, -Stealth]  (-0.05*1, 3.464-0.05*1.732) -- (-0.97*1, 3.464-0.97*1.732);

\draw[->, thick, dashed, -Stealth] (-2 + 0.05*2, 0.05*1.154) -- (-2 + 0.9*2, 0.9*1.154);

\draw[->, thick, dashed, -Stealth] (2 - 0.05*2, 0.05*1.154) -- (2 - 0.9*2, 0.9*1.154);

\draw[->, thick, dashed, -Stealth] (0, 3.34) -- (0, 1.5*1.154);

\draw[thick] (0,0) -- (0, 1.154);
\draw[thick] (0, 1.154) -- (1, 1.732); 
\draw[thick] (0, 1.154) -- (-1, 1.732);

\end{tikzpicture}
\caption{The retraction $X_k -\{v\} \to D_k$ inside of a 2--cell $\alpha$ with edges $e_1$, $e_2$, and $e_3$. The dashed lines indicate the retraction; the solid lines are in the spine. }
\label{fig:retract}
\end{figure}
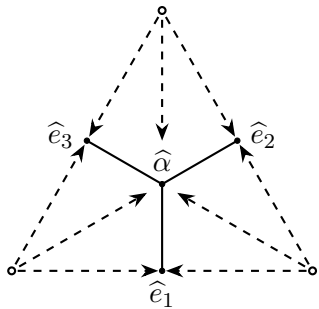

As is shown in Figure~\ref{fig:retract}, the spine graph $D_k$ embeds into $X_k$ and there is a deformation retraction of $X_k -\{v\}$ onto $D_k$.
Each open edge deformation retracts onto its barycenter (midpoint). These deformation retractions extend to a deformation retraction of a 2--cell $
\alpha$ (minus its vertices) onto a tripod graph as indicated in Figure~\ref{fig:retract}. These deformation retractions glue together over open edges to yield a deformation retraction $X_k -\{v\} \to D_k$.

One way to visualize the spine $D_k$ is to embed portions of it into the diagrams in Figure~\ref{fig:simplices}. A template for the spine is described in Figure~\ref{fig:spine}. The actual template is obtained from the graph in Figure~\ref{fig:spine} by identifying vertices in pairs which have the same labels (the $\widehat{a}_i$ vertices). In order to obtain the spine $D_k$ take $k$ copies of the resulting template graph, indexed by $1 \leq j \leq k$, and relabel the vertices in the $j$th copy as follows: $\widehat{\alpha}_i \to \widehat{\alpha}_{i,j}$, 
$\widehat{\beta_i} \to \widehat{\beta}_{i,j}$, and $\widehat{a}_i \to \widehat{a}_i$ for $1 \leq i \leq 8$, and $\widehat{n}_i \to \widehat{n}_{i,j}$, $\widehat{p}_i \to \widehat{p}_{i,j}$ for $1 \leq i \leq 4$. Finally, $D_k$ is obtained by glueing these $k$ copies together by identifying along the vertices $\widehat{a}_i$. 

Each edge in $Lk(v, X_k)$ lies in (the corner of) a 2--cell and is sent by the retraction (see Figure~\ref{fig:retract}) to an edgepath of length 2 in the tripod retract of the 2--cell. In this way the retraction induces a graph map 
$$
\iota: Lk(v, X_k)' \; \to \; D_k
$$
from the first barycentric subdivision of the link to the spine. 
One can see that this map is an immersion of graphs as follows. It is clearly an immersion at the barycenter points. To see that it is an immersion at other vertices, let $e_1$ and $e_2$ be two edges of $Lk(v, X_k)$ based at a vertex of $Lk(v, X_k)$. These edges belong to distinct 2--cells of $X_k$ and so their barycenters are mapped to distinct points in $D_k$. Therefore, the images of these edges are distinct in $D_k$, and so the map is an immersion at all vertices.

Since $\iota: Lk(v, X_k)' \to D_k$ is a graph immersion, it induces an injection $\pi_1(Lk(v, X_k)) \to \pi_1(D_k)$.

\begin{figure}
    \centering
    \begin{tikzpicture}
    
    
     \tikzstyle{every node}=[font=\scriptsize]
    
    \foreach \a in {0,45,...,337.5} 
    { 
    \draw[thick] (\a:3.6cm) -- (\a + 22.5:3.3cm);
    \draw[thick] (\a:3.6cm) -- (\a - 22.5:3.3cm);
    \draw[thick] (\a:2.7cm) -- (\a + 22.5:3.3cm);
    \draw[thick] (\a:2.7cm) -- (\a - 22.5:3.3cm);
    \draw[fill] (\a:3.6cm) circle (1pt);
    \draw[fill] (\a:2.7cm) circle (1pt);
    \draw[fill] (\a+22.5:3.3cm) circle (1pt);
    }

    \foreach \a [count = \b] in {0,45,...,315}
    {
    \draw[thick] (\a:3.6cm) -- (\a: 4cm); 
    \draw[thick] (\a:2.7cm) -- (\a: 1.95cm); 
    \draw[fill] (\a:1.95cm) circle (1pt);
    \draw[fill] (\a:4cm) circle (1pt);
    \draw (\a:4.3cm) node{$\widehat{a}_{\b}$}; 
    \draw (\a:3.3cm) node{$\widehat{\alpha}_{\b}$}; 
    \draw (\a+6:2.5cm) node{$\widehat{\beta}_{\b}$};
    }

    \foreach \a [count = \b] in {22.5, 112.5, 202.5, 292.5}
    {
    \draw (\a:3.6cm) node{$\widehat{p}_{\b}$};
    }
    
    \foreach \a [count = \b] in {-22.5, 67.5, 157.5, 247.5}
    {
    \draw (\a:3.6cm) node{$\widehat{n}_{\b}$};
    }
    
    \draw (0:1.7cm) node{$\widehat{a}_5$};
    \draw (45:1.7cm) node{$\widehat{a}_6$};
    \draw (90:1.7cm) node{$\widehat{a}_7$};
    \draw (135:1.7cm) node{$\widehat{a}_8$};
    \draw (180:1.7cm) node{$\widehat{a}_1$};
    \draw (225:1.7cm) node{$\widehat{a}_2$};
    \draw (270:1.7cm) node{$\widehat{a}_3$};
    \draw (315:1.7cm) node{$\widehat{a}_4$};


    \end{tikzpicture}
    \caption{A template for the spine $D_k$ of $X_k -\{v\}$. The template is obtained from the graph in this figure by identifying vertices with the same labels (the $\widehat{a}_i$ vertices). }
    \label{fig:spine}
\end{figure}
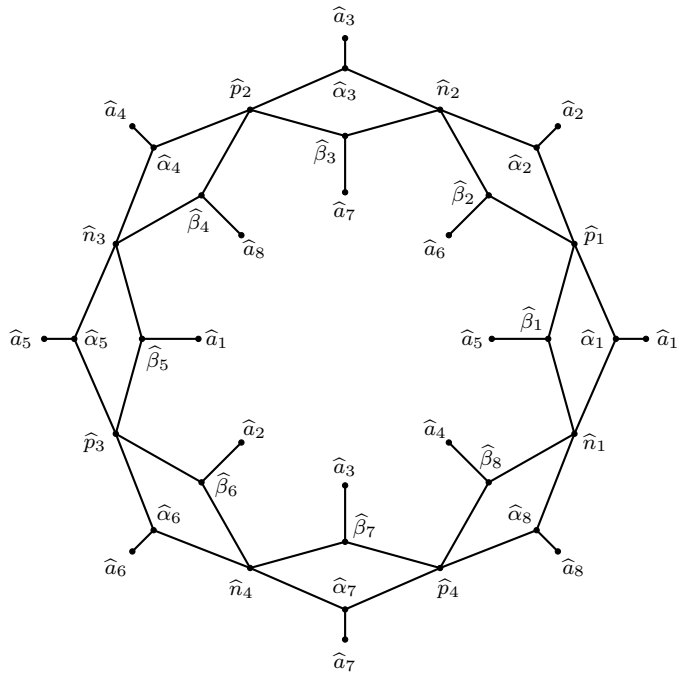

\medskip

\noindent
\emph{Step 4. Computing distance and determining short loops in $Lk(v, X_k)$.} The link $Lk(v, X_k)$ inherits a piecewise spherical structure in which all edges have length $\pi/3$ from the piecewise euclidean structure on $X_k$. In order to obtain a non-positively curved complex, the link needs to be large; that is, all embedded loops in the link have length at least $2\pi$. Combinatorially, this means that all circuits in the link graph have to be of combinatorial length 6 or more. If all loops in the link have combinatorial lengths strictly greater than 6, then the universal cover of $X_k$ is a $\CAT(0)$ space with a cocompact group action by isometries which does not admit any embedded flat planes. By Flat Plane Theorem (see \cite{MR1744486} III.H.1.5) $\pi_1(X_k)$ is hyperbolic. 

The goal is to obtain a branched cover complex of $X_k$, branched over the vertex $v$, with one 0--cell whose link has no loops of combinatorial length 6 or less. This branched cover is obtained by taking a finite covering of $X_k -\{v\}$, lifting the locally euclidean metric to this covering space and taking the metric completion. 

There is an embedded copy of $Lk(v, X_k)$ in $X_k -\{v\}$; namely, the sphere of radius $1/4$ about $v$. In a finite covering space 
$$
\widetilde{X} \; \to \; X_k -\{v\}
$$
the preimage of $Lk(v, X_k)$ is a covering space 
$$
\widetilde{L} \; \to \; Lk(v, X_k).
$$
We want the covering space $\widetilde{X} \to X_k -\{v\}$ and the induced covering space $\widetilde{L} \to Lk(v, X_k)$ to satisfy 4 conditions. 
\begin{enumerate}
    \item The preimage $\widetilde{L}$ is connected. 
    
     This property ensures that the metric completion of the covering space of $X_k -\{v\}$ has just one 0--cell.
     
    \item The preimage of each of the combinatorial length 8 circuits $n^+_{1,j}p^+_{1,j}n^+_{2,j}p^+_{2,j}n^+_{3,j}p^+_{3,j}n^+_{4,j}p^+_{4,j}$ and each of the combinatorial length 8 circuits $n^-_{1,j}p^-_{1,j}n^-_{2,j}p^-_{2,j}n^-_{3,j}p^-_{3,j}n^-_{4,j}p^-_{4,j}$ is connected for $1\leq j \leq k$. 
    
   This significance of this will become apparent later on in the final step of the construction of the free-by-free groups. 
    
    \item The preimage $\widetilde{L}$ does not contain embedded loops of combinatorial length 6 or less. 
    
    This property ensures that the metric completion of the covering space $\widetilde{X}$ of $X_k-\{v\}$ is non-positively curved and that its fundamental group is hyperbolic. 
    
    \item There are lifts of each of the open edges $n_{1,j}$ for $1 \leq j \leq k$ to the covering space of $X_v -\{v\}$ which determine a collection of $2k$ vertices in the covering space $\widetilde{L}$ of $Lk(v, X_k)$ whose mutual combinatorial distances are all at least 6.
    
    This property will be used to establish conclusion (3) of Theorem~\ref{prop:fbyf}. 
\end{enumerate}

Now we describe all loops in $Lk(v, X_k)$ of combinatorial length 6 or less and show that they map to homologically nontrivial 1--cycles in $D_k$. First of all, note that $Lk(v, X_k)$ is a bipartite graph on the disjoint union of vertices 
$$
\{a_1^-, \ldots , a_8^-, p_{i,j}^+, p_{i,j}^- \, |\, 1 \leq i \leq 4,\,  1 \leq j \leq k\}
$$
and 
$$
\{a_1^+, \ldots , a_8^+, n_{i,j}^+, n_{i,j}^- \, |\, 1 \leq i \leq 4,\,  1 \leq j \leq k\}.
$$
This is clear from the template graph in Figure~\ref{fig:link} and the earlier description of how to obtain $Lk(v, X_k)$ from $k$ copies of the template. This means that there are no cycles of odd combinatorial length in $Lk(v, X_k)$; in particular, there are no $m$--cycles for $m=1,3, 5$. 

Now we investigate 2--cycles, 4--cycles, and 6--cycles. By inspection of the template in Figure~\ref{fig:link} there can be no 2--cycles in $Lk(v, X_k)$. 

Next we show that each of the 4--cycles and 6--cycles in $Lk(v, X_k)$ has image in $D_k$ which is homologically nontrivial. To do this, we establish a simple criterion which guarantees that a loop in $Lk(v, X_k)$ has homologically nontrivial image in $D_k$. First, we need some definitions. 

\begin{defn}[type]
Say that a vertex of $D_k$ is of \emph{type 1} (resp.\ \emph{type 2}) if it is the barycenter of a 1--cell (resp.\ 2--cell) of $X_k$.
\end{defn}

Consider a loop $\gamma$ in $Lk(v, X_k)$. By construction/definition, the immersion $\iota: Lk(v, X_k)' \to D_k$ takes vertices of $\gamma$ to type 1 vertices of $D_k$ and barycenters of edges of $\gamma$ to type 2 vertices in $D_k$. 

\begin{defn}[HNT]
A loop $\gamma$ in $Lk(v, X_k)$ satisfies \emph{condition (HNT)} if there is a type 1 vertex in $\iota(\gamma')$ whose preimage in $\gamma$ is a single vertex.
\end{defn}

\begin{lem}
Let $\gamma \subseteq Lk(v, X_k)$ be a loop which satisfies the (HNT) condition. Then $\iota(\gamma')$ is determines a nontrivial element of $H_1(D_k)$.
\end{lem}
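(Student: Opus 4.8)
The plan is to exhibit a single edge $d$ of the graph $D_k$ over which the loop $\iota(\gamma')$ runs exactly once. This suffices: since $D_k$ is a graph it has no $2$--cells, so $H_1(D_k)$ is just the group of $1$--cycles, sitting inside the group of $1$--chains $C_1(D_k)=\bigoplus_e\mathbb{Z}$ (one summand per edge), and a cycle is trivial exactly when every edge--coefficient vanishes. So the moment one edge is hit a net nonzero number of times, we are done.

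First I would read off the local picture at the vertex furnished by (HNT). Let $\widehat e$ be a type~1 vertex of $D_k$ lying on $\iota(\gamma')$ whose preimage in $\gamma$ is a single vertex $u$; here $e$ is a 1--cell of $X_k$ and $u=e^+$ or $u=e^-$. Since $\gamma\subseteq Lk(v,X_k)$ is an embedded loop, it passes through $u$ exactly once, arriving along an edge $e_0$ and leaving along a distinct edge $e_1$ of $Lk(v,X_k)$. Each of $e_0,e_1$ is a corner of a $2$--cell of $X_k$, and because the corner $e_0$ (resp.\ $e_1$) is incident to the link vertex $u=e^{\pm}$, the $2$--cell $\alpha_0$ (resp.\ $\alpha_1$) carrying that corner must have $e$ among its sides. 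By the definition of $\iota$ the vertex $u$ maps to $\widehat e$, the midpoint of $e_0$ maps to $\widehat{\alpha_0}$, and the midpoint of $e_1$ maps to $\widehat{\alpha_1}$; hence $\iota(\gamma')$ traverses the edge $d:=\{\widehat{\alpha_0},\widehat e\}$ of $D_k$.

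Next I would establish the auxiliary fact that $\alpha_0\neq\alpha_1$. Every $2$--cell of $X_k$ is a Euclidean triangle whose three sides are pairwise distinct 1--cells (an $a$--edge, an $n$--edge and a $p$--edge; see Step~1 and Figure~\ref{fig:simplices}), so a $2$--cell has at most one corner incident to a prescribed link vertex $u=e^{\pm}$. If $\alpha_0=\alpha_1$ then $e_0$ and $e_1$ would both be that unique corner, forcing $e_0=e_1$ and contradicting that $\gamma$ leaves $u$ along an edge different from the one it arrived on. Thus $\widehat{\alpha_0}\neq\widehat{\alpha_1}$, and in particular $d$ and $\{\widehat e,\widehat{\alpha_1}\}$ are different edges of $D_k$. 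Now I would count the traversals of $d$: any edge of $\gamma'$ mapping onto $d$ joins a vertex of $\gamma$ lying over $\widehat e$ to an edge--midpoint lying over $\widehat{\alpha_0}$, and since $u$ is the only vertex of $\gamma$ over $\widehat e$, such an edge of $\gamma'$ must be one of the two half--edges at $u$; the half of $e_0$ at $u$ maps onto $d$, whereas the half of $e_1$ at $u$ maps onto $\{\widehat e,\widehat{\alpha_1}\}\neq d$. Hence exactly one edge of $\gamma'$ maps onto $d$, the coefficient of $d$ in the $1$--cycle $\iota(\gamma')$ is $\pm1$, and therefore $[\iota(\gamma')]\neq 0$ in $H_1(D_k)$.

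The one place that needs genuine care is the auxiliary fact $\alpha_0\neq\alpha_1$ — equivalently, that two distinct corners at a common link vertex lie in distinct $2$--cells — which rests on the explicit combinatorics of $X_k$ fixed in Step~1 and Figure~\ref{fig:simplices}; everything else follows formally from $D_k$ being a graph and from the definition of the immersion $\iota:Lk(v,X_k)'\to D_k$. If one does not wish to assume $\gamma$ embedded, the same argument applies after first freely reducing $\gamma$, which changes neither $[\iota(\gamma')]$ nor the relevant count; but the embedded case is all that is needed for the application to the $4$-- and $6$--cycles of $Lk(v,X_k)$.
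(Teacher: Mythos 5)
Your proof is correct and rests on exactly the same observation as the paper's: because $\iota$ is a graph immersion, the two half-edges of $\gamma'$ at the unique preimage $u_0$ of $\hat e$ land on distinct edges of $D_k$, so one of them has coefficient $\pm 1$ in $\iota(\gamma')$. The only cosmetic differences are that you argue directly (exhibiting an edge hit exactly once) rather than by contradiction, and you re-derive the local immersion property at $u_0$ from the combinatorics of the $2$--cells instead of citing the immersion fact already established in Step~3.
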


\begin{proof}
We argue by contraction. Suppose that $\gamma$ is a 1--cycle in $Lk(v, X_k)$ which satisfies condition (HNT) and $\iota(\gamma')$ is homologically trivial in $D_k$. 

Since $\gamma'$ is a 1--cycle in $Lk(v, X_k)'$ the image $\iota(\gamma')$ is a 1--cycle in $D_k$. By assumption, there is a type 1 vertex $u \in \iota(\gamma')$ whose preimage is just one vertex $u_0 \in \gamma$. Because 
$\iota$ is a graph immersion, the two edges in $\gamma'$ which are adjacent to $u_0$ get sent to distinct edges $e_1$ and $e_2$ of $\iota(\gamma')$ adjacent to $u$. 

Since $\iota(\gamma')$ is homologically trivial in $D_k$, the 1--cells in its image need to be crossed at least twice (with total coefficient sum being zero). In particular, the distinct edges $e_1$ and $e_2$ at $u$ are crossed at least twice. This contradicts the fact that the preimage of $u$ is a single vertex $u_0$. 
\end{proof}

The following terminology will be used in determining the structure of 4--cycles and 6--cycles in $Lk(v, X_k)$. 

\begin{defn}
Edges of $Lk(v,X_k)$ which contain the $a^+_i$ vertices (resp.\ the $a_i^-$ vertices) are called \emph{right edges} (resp.\ \emph{left edges}). The remaining edges (which connect the $n^\pm_{i,j}$ vertices to the $p^{\pm}_{i',j}$ vertices) are called \emph{middle edges}. 
\end{defn}

\begin{lem}\label{lem:4cycle}
The image under the map $\iota: Lk(v,X_k)' \to D_k$ of 
each 4--cycle of $Lk(v, X_k)$ is homologically nontrivial in $D_k$. 
\end{lem}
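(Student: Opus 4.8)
The plan is to reduce everything to the preceding lemma, which says that a loop satisfying condition (HNT) has homologically nontrivial image in $D_k$. So it suffices to show that every embedded $4$--cycle $\gamma$ in $Lk(v,X_k)$ satisfies (HNT). I will in fact establish the stronger statement that $\iota$ carries the four vertices of $\gamma$ to four \emph{distinct} type $1$ vertices of $D_k$; since then each of those vertices has a single preimage in $\gamma$, condition (HNT) holds automatically.

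The first observation is that $\iota$ sends a link vertex $e^{\varepsilon}$ (with $\varepsilon\in\{+,-\}$) to the barycentre $\widehat e$ of the $1$--cell $e$ of $X_k$, so two vertices of $\gamma$ can only coincide under $\iota$ if they form a pair $e^+,e^-$ attached to a single $1$--cell $e$. Thus the whole statement reduces to: no embedded $4$--cycle of $Lk(v,X_k)$ contains both $e^+$ and $e^-$ for any $1$--cell $e$. I would prove this by showing $d_{Lk(v,X_k)}(e^+,e^-)\ge 3$ for every $1$--cell $e$ of $X_k$, which is enough because any two vertices of an embedded $4$--cycle lie at combinatorial distance at most $2$ in it. This distance bound splits into two points:
\begin{itemize}
\item $e^+$ and $e^-$ are non-adjacent in $Lk(v,X_k)$: an edge of the link comes from a corner of a triangular $2$--cell of $X_k$, and every $2$--cell of $X_k$ has three distinct edges (Figure~\ref{fig:simplices}), so no corner has the same $1$--cell on both sides;
\item $e^+$ and $e^-$ have no common neighbour in $Lk(v,X_k)$: this I would read off the template in Figure~\ref{fig:link}. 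For $e=a_i$, the neighbours of $a_i^-$ all lie among the $n^{\pm}_{\ast,\ast}$ vertices while those of $a_i^+$ lie among the $p^{\pm}_{\ast,\ast}$ vertices, and these sets are disjoint. For $e=n_{i,j}$, the neighbours of $n_{i,j}^+$ are two $p^+_{\ast,j}$ vertices of the descending octagon together with two of the $a^-$ vertices, whereas the neighbours of $n_{i,j}^-$ are two $p^-_{\ast,j}$ vertices of the ascending octagon together with the two \emph{other} $a^-$ vertices incident to it; again disjoint. The case $e=p_{i,j}$ is handled symmetrically, interchanging the roles of the $n$-- and $p$--vertices and of the $a^-$-- and $a^+$--vertices.
\end{itemize}
Combining these two points gives $d_{Lk(v,X_k)}(e^+,e^-)\ge 3$, hence no $4$--cycle contains both; so the four vertices of $\gamma$ map to four distinct type $1$ vertices of $D_k$, $\gamma$ satisfies (HNT), and the preceding lemma finishes the proof.

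The only part that needs genuine care — and hence the main obstacle — is the neighbour-disjointness bookkeeping in the second bullet: one must read the incidences of the template graph in Figure~\ref{fig:link} correctly (for instance that $n_1^+$ meets $a_8^-,a_1^-$ while $n_1^-$ meets $a_4^-,a_5^-$, and the analogous patterns for the other indices and for the $p$--vertices). Once the adjacency pattern of the template is pinned down this is a short finite verification, but it is the step the non-positive-curvature argument ultimately rests on. It is worth noting that this argument never uses the copy index $j$, so it simultaneously covers the $4$--cycles that live within a single copy and those that run between two different copies.
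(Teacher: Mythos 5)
Your proof is correct, but it takes a genuinely different route from the paper's.

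The paper's proof first classifies the $4$--cycles structurally: it shows no $4$--cycle can use a middle edge, observes that deleting the open middle edges leaves a disjoint union of $8$ complete bipartite joins of the form $\{a_i^{\pm}, a_{i'}^{\pm}\}\ast(\text{$2k$-element set})$, and concludes that every $4$--cycle contains a pair of distinct $a$--vertices $a_i^{\pm}, a_{i'}^{\pm}$ with $i\neq i'$, whose images $\widehat a_i, \widehat a_{i'}$ automatically have singleton preimages. You instead bypass the classification entirely and prove the uniform distance bound $d_{Lk(v,X_k)}(e^+,e^-)\ge 3$ for every $1$--cell $e$, which forces all four vertices of any $4$--cycle to land on four \emph{distinct} type $1$ vertices of $D_k$; that is a slightly stronger conclusion, and (HNT) follows immediately. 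Your reduction of the possible coincidences under $\iota$ to pairs $e^{+},e^{-}$, and the non-adjacency/no-common-neighbour check via the triangular $2$--cells and the template incidences (e.g.\ $n_1^+$ meets $a_8^-, a_1^-$ while $n_1^-$ meets $a_4^-, a_5^-$), are both correct; so is the remark that the argument is oblivious to the copy index $j$ and hence handles cross-copy $4$--cycles with no extra work.

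Trade-offs worth noting: your argument is shorter and avoids the middle-edge case analysis, but it is tailored to $4$--cycles. For a $6$--cycle, opposite vertices are at combinatorial distance $3$, so $d(e^+,e^-)\ge 3$ alone does not rule out both endpoints appearing in it; the paper's middle-edge analysis and the bipartite join description do feed into the companion $6$--cycle lemma, where a different (but related) singleton-preimage argument through a vertex $p_{i,j}^{\pm}$ is used. So the structural classification the paper proves in passing has value beyond the $4$--cycle case, even if your approach is cleaner for this particular lemma.
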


\begin{proof}
First of all, we argue that there are no 4--cycles which involve the middle edges. Since the middle edges form disjoint circuits of combinatorial length 8, there are no 4--cycles composed entirely of middle edges. A path of 3 consecutive middle edges has one endpoint of the form $n^\pm_{i,j}$ and the other endpoint of the form $p^\pm_{i',j'}$ and so cannot be completed in $Lk(v, X_k)$ to a circuit of combinatorial length 4. If a path of two consecutive middle edges starts at (without loss of generality) some $p^+_{i,j}$ it must end at $p^+_{i', j}$. If we connect one of these endpoints to an $a^+$ vertex by an edge, the other edges from that $a^+$ vertex connect to either $p^-$ vertices or to $p_{a,b}^+$ vertices for $b \not=j$ and so cannot form a 4--cycle. Finally, a circuit which contains a segment of combinatorial length 3 consisting of a single middle edge with a left edge on one end and a right edge on the other has combinatorial length at least 6. 

Therefore, 4--cycles in $Lk(v, X_k)$ are composed entirely of left edges or of right edges. To understand their structure we remove all of the open middle edges from $Lk(v, X_k)$. 
The resulting graph is the disjoint union of 8 joins; 4 composed of right edges and 4 of left edges. Here are two such joins (the first composed of right edges and the second composed of left edges):
$$
\{a_1^+, a_2^+\} \ast \{p_{1,j}^+, p_{3,j}^- \, |\, 1 \leq j \leq k\}   \quad {\hbox{and}} \quad 
\{a_8^-, a_1^-\} \ast \{n_{1,j}^+, n_{3,j}^- \, |\, 1 \leq j \leq k\}. 
$$
The others have a similar structure. In each case, the 4--cycles are obtained by taking the join of the 2 element set with a 2 element subset of the $2k$ element set. This describes all of the nontrivial 4--cycles in $Lk(v, X_k)$.

Consider a nontrivial 4--cycle $\gamma$ in $Lk(v, X_k)$. It is the join of some set $\{a^\pm_i, a^\pm_{i'}\}$ where $i \not=i'$ with another set involving $p^\pm$ or $n^\pm$ vertices. In particular, the images $\widehat{a}_i$ and $\widehat{a}_{i'}$ have singleton preimages in $\gamma'$ and so the image $\iota(\gamma')$ is homologically nontrivial by the (HNT) condition. 
\end{proof}

\begin{lem}\label{lem:6cycle}
The image under the map $\iota: Lk(v,X_k)' \to D_k$ of 
each 6--cycle of $Lk(v, X_k)$ is homologically nontrivial in $D_k$. 
\end{lem}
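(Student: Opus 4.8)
The plan is to reduce to the \textup{(HNT)} criterion, exactly as in the proof of Lemma~\ref{lem:4cycle}. First I record two structural features of $Lk(v,X_k)$. It is bipartite with colour classes $\{a_i^-\}\cup\{p_{i,j}^{\pm}\}$ and $\{a_i^+\}\cup\{n_{i,j}^{\pm}\}$, and every edge of $Lk(v,X_k)$ is a left edge (joining some $a_i^-$ to an $n$--type vertex), a right edge (joining some $a_i^+$ to a $p$--type vertex), or a middle edge (joining an $n$--type vertex to a $p$--type vertex); in particular no edge joins two $a$--vertices. Moreover the immersion $\iota$ sends each link vertex $e^{\pm}$ to the type~1 vertex $\widehat{e}$, so two \emph{distinct} vertices of a loop can have the same $\iota$--image only if they are the two ends $e^{+},e^{-}$ of a single $1$--cell $e$ of $X_k$; hence every type~1 vertex of $D_k$ has at most two preimages in the vertex set of $Lk(v,X_k)$.

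Next I would take a $6$--cycle $\gamma=v_1v_2\cdots v_6$ (so the $v_i$ are distinct) and suppose, for contradiction, that $\gamma$ fails condition \textup{(HNT)}. Then every $v_i$ shares its $\iota$--image with some other $v_j$; since no fibre of $\iota$ contains more than two vertices of $Lk(v,X_k)$, the six vertices split into exactly three pairs $\{e_1^{+},e_1^{-}\}$, $\{e_2^{+},e_2^{-}\}$, $\{e_3^{+},e_3^{-}\}$ for three distinct $1$--cells $e_1,e_2,e_3$ of $X_k$. Feeding this into the bipartition: a pair $\{a_i^{+},a_i^{-}\}$ puts one vertex in each colour class, a pair $\{n_{i,j}^{+},n_{i,j}^{-}\}$ puts two vertices in the second class, and a pair $\{p_{i,j}^{+},p_{i,j}^{-}\}$ puts two vertices in the first class. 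A $6$--cycle has exactly three vertices in each colour class, so either (i) all three $e_i$ are horizontal edges $a_i$, or (ii) exactly one of the $e_i$ is some $a_i$, one is some $n_{t,u}$, and one is some $p_{r,s}$.

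Case (i) is immediate from the first paragraph: no edge of $Lk(v,X_k)$ joins two $a$--vertices, so there is no $6$--cycle supported on $a$--vertices. For case (ii), the only $n$--type vertices occurring in $\gamma$ are $n_{t,u}^{+}$ and $n_{t,u}^{-}$. The vertex $a_i^{-}$ lies in the first colour class, so by the alternation of colours along $\gamma$ its two $\gamma$--neighbours lie in the second colour class, and since $a_i^{-}$ is not adjacent to $a_i^{+}$ these neighbours must be $n_{t,u}^{+}$ and $n_{t,u}^{-}$. Thus $a_i^{-}$ would be adjacent to both $n_{t,u}^{+}$ and $n_{t,u}^{-}$. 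However, the explicit left--edge join decomposition used in the proof of Lemma~\ref{lem:4cycle} shows that the $n$--type neighbours of any single $a^{-}$--vertex have the form $n_{t_0,\bullet}^{+}$ and $n_{t_1,\bullet}^{-}$ with $t_0\neq t_1$, so $a_i^{-}$ cannot be adjacent to both $n_{t,u}^{+}$ and $n_{t,u}^{-}$ --- a contradiction (one could instead argue symmetrically at $a_i^{+}$ using the right--edge joins). Hence every $6$--cycle satisfies \textup{(HNT)}, and the preceding lemma shows $\iota(\gamma')$ is a nontrivial element of $H_1(D_k)$.

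The delicate point is Case (ii): it forces one to go beyond the coarse left/right/middle trichotomy and use the precise combinatorics of the left (and right) joins --- specifically that the two families of $n$--type vertices attached to a given $a^{-}$--vertex carry \emph{different} spatial indices. Everything else is a short bipartite counting argument resting on the \textup{(HNT)} lemma.
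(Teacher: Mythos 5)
Your proof is correct, and it takes a genuinely different route from the paper's. The paper argues in two stages: first it shows that every $6$--cycle must contain a middle edge (by observing that the complement of the open middle edges is a disjoint union of bipartite joins $K_{2,2k}$, in which every immersed closed loop has combinatorial length divisible by $4$), and then it shows that a $p^{\pm}_{i,j}$ vertex incident to that middle edge is the unique element of its $\iota$--fibre inside $\gamma$ (because $p_{i,j}^+$ and $p_{i,j}^-$ share a colour, would hence have to be joined by a length--$2$ subsegment of $\gamma$, and no vertex is adjacent to both). You instead negate \textup{(HNT)} outright, use the fact that every $\iota$--fibre of a type~1 vertex has at most two elements to force the six vertices of $\gamma$ into three fibre pairs, run a colour--counting argument to see that the only balanced configurations are three $a$--pairs or one $a$--pair plus one $n$--pair plus one $p$--pair, kill the first case immediately (no $a$--$a$ edges), and kill the second by looking locally at the $a_i^-$ vertex and invoking the left--edge join pattern. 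Your approach entirely bypasses the middle--edge lemma, which is a nice simplification, but in exchange it leans on a slightly sharper statement of the left--edge join structure than the paper records explicitly — namely that for every $a_i^-$, its $n^+$--neighbours and its $n^-$--neighbours carry distinct first indices. The paper only exhibits this pattern for $a_1^-$ and $a_8^-$ and appeals to ``the others have a similar structure''; if you were to fully formalize your argument you would need to verify this for all eight $a_i^-$ (equivalently, all eight $a_i^+$), which is a routine but nontrivial check against the link template. Both arguments ultimately rest on the same local combinatorics of the link, organized differently, and both are valid.
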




\begin{proof}
We first observe that each $6$--cycle must contain a middle edge. 
As we observed in Lemma~\ref{lem:4cycle} the complement of the union of the open middle edges in $Lk(v, X_k)$ is the disjoint union of complete bipartite graphs, each of which is a join a 2 vertex set with a $2k$ vertex set. Every immersed segment of combinatorial length 2 which starts and ends in the 2 vertex has distinct endpoints. Therefore, the combinatorial lengths of closed, immersed paths in these bipartite graphs must be $4m$ for positive integers $m$. In particular, there are no 6--cycles.

Let $\gamma$ be an arbitrary $6$--cycle in $Lk(v,X_k)$. By the previous paragraph $\gamma$ contains at least one middle edge, and so must contain a vertex $u$ of the form $p^\pm_{i,j}$. We show that $\{u\}$ is the full preimage of $\{\iota(u)\}$. Assume to the contrary that the preimage of $\iota(u)$ contains two distinct vertices in $\gamma$. Then $\gamma$ must contain the pair of distinct vertices $\{p_{i,j}^+,p_{i,j}^-\}$. Since two vertices $p_{i,j}^+$ and $p_{i,j}^-$ have the same color in the bipartite graph $Lk(v,X_k)$, there is no segment with odd combinatorial length connecting them. Therefore, there is a subsegment of $\gamma$ of combinatorial length $2$ connecting $p_{i,j}^+$ and $p_{i,j}^-$. In particular, $p_{i,j}^+$ and $p_{i,j}^-$ are both adjacent to some vertex in $Lk(v,X_k)$ and we observe that it can not happen. Therefore, the preimage of $\iota(u)$ consists of a single vertex. This implies that $\gamma$ satisfies (HNT) condition and $\iota(\gamma')$ determines a nontrivial element in $H_1(D_k)$.
\end{proof}

The following lemma will be used in establishing the ultra-convexity result (part (3)) of Theorem~\ref{prop:fbyf}. 

\begin{lem}\label{lem:distance}
The combinatorial distance in $Lk(v, X_k)$ between $n_{1,j}^+$ and $n_{1,j}^-$ is at least 6. 
\end{lem}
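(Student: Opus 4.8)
The plan is to read off the adjacency graph of $Lk(v,X_k)$ from the cell structure of $X_k$ and then run a short finite distance estimate. Recall from the description of $Lk(v,X_k)$ (Figure~\ref{fig:link}) that it is bipartite, with vertex classes $\{a_i^-\}\cup\{p_{i,j}^\pm\}$ and $\{a_i^+\}\cup\{n_{i,j}^\pm\}$; hence $n_{1,j}^-$ and $n_{1,j}^+$ lie in the \emph{same} class, so $d(n_{1,j}^-,n_{1,j}^+)$ is even and it suffices to show it exceeds $4$. The structural facts I would use are: (i) the ascending $8$--cycles $Lk_{x_j}(v,X_k)$ (on the vertices $n_{i,j}^-,p_{i,j}^-$) and the descending $8$--cycles $Lk_{\overline x_j}(v,X_k)$ (on the vertices $n_{i,j}^+,p_{i,j}^+$) are pairwise disjoint; (ii) every other edge of $Lk(v,X_k)$ joins one of the $n_{i,j}^\pm,p_{i,j}^\pm$ to one of the sixteen vertices $a_1^\pm,\dots,a_8^\pm$; (iii) no edge joins two $a$--vertices, since no $2$--cell of $X_k$ carries two horizontal edges; and (iv) tracing the attaching maps $f_0,f_1$ through the triangles $\alpha_{i,j},\beta_{i,j}$ (and using the edge orientations fixed in Step~1, which are exactly the ones making the bipartition above consistent), each $a_i^-$ is adjacent only to $n$--vertices and each $a_i^+$ only to $p$--vertices. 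In particular, deleting the sixteen $a$--vertices separates $n_{1,j}^-$ (on $Lk_{x_j}$) from $n_{1,j}^+$ (on $Lk_{\overline x_j}$), so every path between them meets an $a$--vertex.

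Next I would record the two neighbourhoods, read off directly from the four triangles of $X_k$ containing the edge $n_{1,j}$, namely $\alpha_{1,j},\alpha_{8,j},\beta_{1,j},\beta_{8,j}$ (Figure~\ref{fig:simplices}), by examining each of their corners:
$$
N(n_{1,j}^-)=\{\,p_{1,j}^-,\ p_{4,j}^-,\ a_4^-,\ a_5^-\,\},
\qquad
N(n_{1,j}^+)=\{\,p_{1,j}^+,\ p_{4,j}^+,\ a_1^-,\ a_8^-\,\}.
$$
These sets are disjoint, so $d(n_{1,j}^-,n_{1,j}^+)\ge 4$. To rule out equality I would compute the radius--$2$ balls about the two vertices and check they are disjoint (equivalently, that no path of length $4$ exists). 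Taking one further step from each neighbour above and using (i)--(iv): from $p_{1,j}^-,p_{4,j}^-$ one reaches only $n$--vertices $n_{1,j}^-,n_{2,j}^-,n_{4,j}^-$ and $a^+$--vertices of index in $\{3,4,5,6\}$, and from $a_4^-,a_5^-$ only vertices of type $n_{1,j'}^-$ or $n_{3,j'}^+$; symmetrically, from $p_{1,j}^+,p_{4,j}^+$ one reaches only $n^+$--vertices of index in $\{1,2,4\}$ and $a^+$--vertices of index in $\{1,2,7,8\}$, and from $a_1^-,a_8^-$ only vertices of type $n_{1,j'}^+$ or $n_{3,j'}^-$. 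Comparing the two radius--$2$ balls class by class — $n^-$--indices $\{1,2,4\}$ versus $\{3\}$; $n^+$--indices $\{3\}$ versus $\{1,2,4\}$; $a^+$--indices $\{3,4,5,6\}$ versus $\{1,2,7,8\}$; and the $p$-- and $a^-$--vertices visibly disjoint by sign and index — shows the balls are disjoint, so no path of length $\le 4$ joins $n_{1,j}^-$ to $n_{1,j}^+$. Since this distance is even, it is at least $6$.

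The only real difficulty is the bookkeeping in the previous two paragraphs: one must convert each corner of each triangle $\alpha_{i,j},\beta_{i,j}$ into an edge of $Lk(v,X_k)$ with the correct endpoint signs, which requires care with the orientations of the edges $a_i$ under $f_0$ and $f_1$. Once these adjacency relations are in hand, the lemma reduces to the finite comparison above and there is no further content.
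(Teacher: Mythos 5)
Your proof is correct and takes essentially the same approach as the paper: both use the bipartite structure to reduce to ruling out a path of length $4$, and then do a direct combinatorial check of neighborhoods in the link. The only organizational difference is that the paper first projects to the single template graph (using that the projection is distance non-increasing) and does a case analysis on the first two edges out of $n_1^+$, whereas you compute and compare the two radius-$2$ balls directly in $Lk(v,X_k)$ — equivalent calculations.
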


\begin{proof} 

Note that the map from the link graph to the template graph defined by mapping $a_i^\pm$ to $a_i^\pm$, $n_{i,j}^\pm$ to $n_i^\pm$, $p_{i,j}^\pm$ to $p_i^\pm$, and extending over the 1--skeleton is distance non-increasing. Therefore, it is sufficient to compute distances between $n_1^+$ and $n_1^-$ in the template graph. 
 More precisely, we will show that each immersed segment $\gamma$ connecting $n_1^+$ and $n^-_1$ must have combinatorial length at least $6$. 
 
 Since $n_1^+$ and $n_1^-$ have the same color in the bipartite structure on the template graph, the combinatorial length of $\gamma$ must be even. Therefore, we only need to show the combinatorial length of $\gamma$ is greater than $4$.
We will argue by starting at $n_1^+$ and considering all possibilities for the first edges $e_1$ of $\gamma$. 

There are only four possibilities for $e_1$: one of two left edges or one of two middle edges. If $e_1$ is one of the two left edges adjacent to $n_1^+$, the initial segment of $\gamma$ of combinatorial length $2$ must connect $n_1^+$ to $n_3^-$. Since $n_3^-$ and $n_1^-$ are not both adjacent to some vertex, the distance between them must be greater than $2$. This implies that the combinatorial length $\gamma$ must be greater than $4$. 

We now assume that $e_1$ is a middle edge adjacent to $n_1^+$. Then the other endpoint of $e_1$ is either $p_1^+$ or $p_4^+$. Therefore, the initial segment of combinatorial length $2$ of $\gamma$ must connect $n_1^+$ to one of the vertices in $\{a_1^+, a_2^+, n_2^+, a_7^+, a_8^+, n_4^+\}$. Using the same argument in the previous paragraph we see that the distance between each of these points and $n_1^-$ must be greater than $2$. Therefore, the combinatorial length of $\gamma$ must be greater than $4$.
\end{proof}

\medskip

\noindent
\emph{Step 5. The cyclic branched cover $\widehat{X}_k \to X_k$.} We define a finite, cyclic branched cover $\widehat{X}_k \to X_k$ which is branched over the vertex $v$ by first taking a finite, cyclic covering $\widetilde{X} \to X_k -\{v\}$, lifting the piecewise euclidean metric to $\widetilde{X}$ and then considering the metric completion. Since $D_k$ is a deformation retract of $X_k -\{v\}$ we can model this covering by a cyclic cover of $D_k$. 

The non-positive curvature of $Y_k$ and conclusion (1) of Theorem~\ref{prop:fbyf} will be established by keeping careful control of links as we form the branched cover $\widehat{X}_k$ of $X_k$. In particular, we need to ensure that each of the 4--cycles and 6--cycles in $Lk(v, X_k)$ are unwound nontrivially in the covering space $\widetilde{L}$. Also, we need to ensure that each of the $2k$ loops which are 8--cycles of middle edges has a single preimage in $\widetilde{L}$. 

To this end, define 
${\mathcal L}$ to be the collection of all the nontrivial 4--cycles and 6--cycles and the $2k$ 8--cycles of middle edges. Recall that $\iota: Lk(v, X_k)' \to D_k$ is the graph immersion induced by the retraction map $X_v -\{v\} \to D_k$. Define
$$
{\mathcal C} \; =\; \{ \, [\iota(\gamma')] \in H_1(D_k) \; |\; \gamma \in {\mathcal L}\,\}.
$$
By Lemma~\ref{lem:4cycle} and Lemma~\ref{lem:6cycle} the images of the 4--cycles and 6--cycles are homologically nontrivial in $D_k$. Each of the 8--cycles of middle edges satisfies the (HNT) condition and so has homologically nontrivial image in $D_k$. The set ${\mathcal C}$ is the collection of all of these nontrivial elements of $H_1(D_k)$.

The finite cyclic cover of $D_k$ corresponds to a homomorphism $\pi_1(D_k) \to \Z_N$ for some integer $N>1$. This homomorphism is expressed as the composition
$$
\pi_1(D_k) \; \to \; H_1(D_k) \; \to \; \Z \; \to \; \Z_N
$$
where the middle homomorphism $\ell: H_1(D_k) \to \Z$ is given by inner product with a suitable element of $H_1(D_k)$ chosen so that the image under $\ell$ of each of the elements of ${\mathcal C}$ above is nonzero in $\Z$. See Lemma~3.1 in \cite{MR1898153} for one way of doing this. 

We now describe how to choose $N$, the degree of the cover. 
Recall that $Lk(v, X_k)$ is a subspace of $X_k -\{v\}$ and that the preimage of $Lk(v, X_k)$ in the cyclic cover $\widetilde{X} \to X_k -\{v\}$ is a covering space $\widetilde{L} \to Lk(v, X_k)$. There are two 
things to consider in choosing $1< N \in \Z$. 
\begin{enumerate}
    \item The first consideration concerns non-positive curvature and hyperbolicity. If $N \in \Z$ is chosen to be relatively prime to each of the integers $\ell(c)$ for $c \in {\mathcal C}$, then each loop in ${\mathcal L}$ has preimage a single $N$--fold cover loop in $\widetilde{L}$. 
    
    In particular, $\widetilde{L}$ is connected. This means that the metric completion of the lifted piecewise euclidean metric on $\widetilde{X}$ just adds a single vertex $\hat{v}$. Furthermore, $Lk(\hat{v}, \widehat{X}_v) = \widetilde{L}$ has no loops of combinatorial length less than 8 (because it is bipartite and, by construction, has no loops of combinatorial length 6 or less). Therefore, $\widehat{X}_v$ is a non-positively curved space and $\pi_1(\widehat{X}_v)$ is hyperbolic.

    \item The second consideration concerns conclusion (3) of Theorem~\ref{prop:fbyf}. 
    
    Fix a maximal tree $T_D \subseteq D_k$ in the spine graph. 
    Define ${\mathcal P}_6$ to be the collection of all edgepaths in $Lk(v, X_k)$ of combinatorial length at most 6. Given $\gamma \in {\mathcal P}_6$ the path $\iota(\gamma')$ is an edgepath in $D_k$. There exists a unique geodesic edgepath $\delta_\gamma$ in $T_D$ whose initial point is the terminal point of $\iota(\gamma')$ and whose terminal point is the initial point of $\iota(\gamma')$. The concatenation $\iota(\gamma')\cdot \delta_\gamma$ defines a 1--cycle in $D_k$.
    
     Some of the 1--cycles $\iota(\gamma')\cdot\delta_\gamma$ are homologically nontrivial, and some of these homology classes are sent by the map $\ell: H_1(D_k) \to \Z$ to nontrivial integers. 
    Define an integer $M$ by 
    $$
    M \; =\; \max\left\{\left. |\ell([\iota(\gamma')\cdot\delta_\gamma])|  \; \right|\; \gamma \in {\mathcal P}_6\right\}.
    $$

    One can understand the cyclic covering space $\widetilde{D_k} \to D_k$ by taking the maximal tree 
    $T_D \subset D_k$ chosen above and considering $N$ lifts of it labelled $T^{(1)}_D, \ldots, T^{(N)}_D$ arranged along a line. For each $1 \leq i \leq N$, the vertex $\widehat{n}_{1,j}^{(i)} \in T^{(i)}_D$ is the barycenter (midpoint) of an open 1--cell $n_{1,j}^{(i)} \subset \widetilde{X}$. This 1--cell corresponds to vertices $n_{1,j}^{(i)\pm}$ in the lift $\widetilde{L}$ of the link. 
     
    If we choose $N$ to be larger than $(M+1)k$ and relatively prime to the integers $\ell(c)$ for $c \in {\mathcal C}$ as above (we could take $N$ to be a prime), then the mutual combinatorial distances between the following $2k$ vertices 
    $$
     \left\{ \left. \left(n_{1,j}^{((M+1)j)}\right)^+,\;  \left(n_{1,j}^{((M+1)j)}\right)^- \; \right\rvert \; 1 \leq j \leq k \right\}$$
    in $\widetilde{L}$ are at least 6. Indeed, the combinatorial distance between 
    $\left(n_{1,j}^{((M+1)j)}\right)^+$ and $\left(n_{1,j'}^{((M+1)j')}\right)^-$ for $j\not=j'$ is greater than 6 by our choices of $M$ and $N$ and construction of the cyclic covering. In the case $j=j'$ it is at least 6 because the covering map $\widetilde{L} \to Lk(v, X_k)$ is distance non-increasing and we computed the combinatorial distance between $n_{1,j}^+$ and $n_{1,j}^-$ in $Lk(v, X_k)$ to be at least 6 in Lemma~\ref{lem:distance}. 
    
\end{enumerate}

In conclusion, the metric completion $\widehat{X}_k$ of $\widetilde{X}$ is a piecewise euclidean 2--complex whose 2-cells are euclidean equilateral triangles. It has one 0--cell $\widehat{v}$ and 
$Lk(\widehat{v}, \widehat{X}_k) = \widetilde{L}$ has no loops of combinatorial length less than 8. This means that $\widehat{X}_k$ is non-positively curved and that $\pi_1(\widehat{X}_k)$ is a hyperbolic group.

The complex $\widehat{X}_k$ has $8N$ 1--cells labelled $a_i^{(p)}$ for $1 \leq i \leq 8$ and $1 \leq p \leq N$. It has $4kN$ 1--cells labelled $n_{i,j}^{(p)}$ and $4kN$ 1--cells labelled $p_{i,j}^{(p)}$ for $1\leq p \leq N$, $1\leq i \leq 4$, and $1 \leq j \leq k$. It has $16k N$ triangular 2--cells. 

The branched covering map $\widehat{X}_k \to X_k$ composes with the rose-valued Morse function 
$X_k \to \Theta_k$ to give a rose-valued Morse function $\widehat{X}_k \to \Theta_k$. By construction, this function satisfies the condition $(C_0)$ but not $(HC_1)$ of Definition~2.6 in \cite{MR1898153} and by Corollary~2.9 of that paper the kernel of the induced map 
$\pi_1(\widehat{X}_k, \widehat{v}) \to F_k$ is finitely generated but not finitely presented.
The connectivity of $Lk_{x_j}(\widehat{v}, \widehat{X}_k)$ and of 
$Lk_{\overline{x}_j}(\widehat{v}, \widehat{X}_k)$ follows from the fact that the preimage of each of the $2k$ loops (of combinatorial length 8) composed of middle edges in $Lk(v, X_k)$ is a single circle of combinatorial length $8N$ in $\widetilde{L} = Lk(\widehat{v}, \widehat{X}_x)$. 

The $8N$ 1--cells labelled $a_i^{(p)}$ are horizontal with respect to the Morse function above and the corresponding groups elements, also denoted by $a_i^{(p)}$, generate the kernel of $\pi_1(\widehat{X}_k, \widehat{v}) \to F_k$.

The rose $R_k$ of $k$ edges with labels $n_{1,j}^{((M+1)j)}$ for $1 \leq j \leq k$ is ultra-convex in $\widehat{X}_k$ and determines a free subgroup of $\pi_1(\widehat{X}_k)$ which maps isomorphically to the group $F_k$ under the epimorphism induced by the Morse function.

\medskip

\noindent
\emph{Step 6. The subcomplex $Y_k \subseteq \widehat{X}_k$.} One obtains a subcomplex $Y_k \subseteq \widehat{X}_k$ by removing (any) one of the open horizontal 1--cells $a_i^{(p)}$ together with the interior of each of the $2k$ 2-cells which contain that horizontal $1$--cell. Because it is a subcomplex of $\widehat{X}_k$, the 2--complex $Y_k$ is piecewise euclidean, locally $CAT(0)$ and $\pi_1(Y_k)$ is hyperbolic. Furthermore, the rose $R_k$ is ultra-convex in $Y_k$ and determines a free subgroup $\pi_1(Y_k)$ which maps isomorphically to $F_k$ under the epimorphism induced by the Morse function.

The effect of this operation on the Morse function links is to remove a single edge from each of $Lk_{x_j}(\widehat{v}, \widehat{X}_k)$ and 
$Lk_{\overline{x}_j}(\widehat{v}, \widehat{X}_k)$, turning them from circles of combinatorial length $8N$ into line segments of combinatorial length $8N-1$. Therefore, the Morse function restricted to $Y_k$ satisfies the condition $C_1$ of Definition~2.6 in \cite{MR1898153} and Theorem~2.7(2) of that paper implies that the kernel of the induced homomorphism is free
$$
1 \; \to \; F_{8N-1} \; \to \; \pi_1(Y_k) \; \to \; F_k \; \to \; 1. 
$$
Note that kernel $F_{8N-1}$ is the fundamental group of the rose of the $8N-1$ remaining horizontal 1--cells of $Y_k$. Also, the rose $R_k \subset Y_k$ has fundamental group $F_k$ which maps isomorphically to the quotient $F_k$ in the short exact sequence above. This means that 
$\pi_1(Y_k) \cong (F_{8N-1} \rtimes F_k)$ and the distance in the piecewise spherical metric on $Lk(\widehat{v}, Y_k)$ between any two points of $Lk(\widehat{v}, R_k)$ is at least $6(\pi/3) = 2\pi$. Therefore, the rose $R_k$ and the subgroup $F_k$ are ultra-convex. This concludes the proof of Theorem~\ref{prop:fbyf}.

\subsection{Non-positively curved complexes for $F_p\rtimes \Z$ groups}
\label{freebycyclic} In this subsection we describe various $\CAT(0)$ free-by-cyclic groups which play the role of the group $H_0$ in Proposition~\ref{prop:embed0}. 

There are many examples in the literature of $\CAT(0)$ free-by-cyclic groups whose kernels are exponentially or polynomially distorted of arbitrary degree. 
We give explicit examples of such groups here for completeness and also to show that they can be amalgamated with the free-by-free groups of subsection~5.1. In particular, we show that the geometric model of the $F_k \rtimes \Z$ group contains an embedded rose $R_k$ corresponding to the kernel $F_k$. The kernel $F_k$ plays the role of the group $A_0$ in Proposition~\ref{prop:embed0}. For each class of examples, we specify the corresponding vertex group $S$ and record the distortion of $A_0= F_k$ in $S$. 

\medskip

\noindent
\textbf{Example (1).} \emph{Free-by-cyclic groups with polynomially distorted kernels.} Consider the groups $\Gamma_k$ defined by 
$$
\Gamma_k \; =\; \langle a_1, \ldots , a_k, t \, | \, a_1ta_1^{-1} =t, \; {\hbox{ and $a_ia_{i-1}a_i^{-1} = t$ for $2 \leq i \leq k$}} \rangle. 
$$
The corresponding presentation 2--complex for $\Gamma_k$ has one 0--cell, $(k+1)$ 1--cells labelled $a_1, \ldots, a_k, t$, and $k$ square 2--cells. When given a piecewise euclidean structure in which each 2--cell is a euclidean square of side length 1, this complex is non-positively curved (there are no circuits of combinatorial length less than 4 in the link) and so $\Gamma_k$ is a $\CAT(0)$ group.

There is a circle-valued Morse function on the presentation 2--complex for $\Gamma_k$ obtained by mapping each edge homeomorphically to a circle and extending linearly over the 2--cells. This induces a map $\Gamma_k \to \Z$ sending each of the generators $a_1, \ldots, a_k, t$ to a generator of $\Z$. The ascending and descending links of this Morse function are trees (the ascending link is a cone on the discrete set $a_1^-, \ldots, a_k^-$ with cone point $t^-$ and the descending link is a line segment of combinatorial length $k$ with vertices labelled (in order) $t^+, a_1^+, \ldots, a_k^+$) and so $\Gamma_k$ is an $F_k$-by-$\Z$ group. 

A set of generators of the kernel, $\{x_1, \ldots , x_k\}$, is defined by 
$x_1 =ta_1^{-1}$ and $x_i = a_{i-1}a_i^{-1}$ for $2 \leq i \leq k$. These are represented by the diagonals of the square 2--cells of the presentation 2--complex; these form a rose, $R_k$, of $k$ petals based at the vertex of the presentation 2--complex. 

A representative of the monodromy $\varphi$ is given by conjugation by $t$; namely $\varphi(x_i) = tx_it^{-1}$. One verifies that $\varphi(x_1) = x_1$, $\varphi(x_2) = x_1x_2$ and in general that $\varphi(x_i) = x_1x_2 \ldots x_i$ for $3 \leq i \leq k$. One proves by induction that $\varphi^n(x_1)$ has length 1 and that $\varphi^n(x_i)$ has length equivalent to $n^{i-1}$ for $2 \leq i \leq k$. The induction step uses the fact that 
$$\varphi(x_i) \; = \; x_1 \ldots x_i \; =\; \varphi(x_{i-1})x_i
$$ 
and so $\varphi^n(x_i) = \varphi^{n}(x_{i-1}) \varphi^{n-1}(x_{i-1}) \ldots \varphi(x_{i-1})x_i$ and that these are all positive words. A lower bound of $x^k$ for the distortion of the kernel $F_k$ in $G_k$ is given by the element $t^nx_k^nt^{-n} = (\varphi^n(x_k))^n$. An upper bound is established by considering a word $w(x_i, t)$ of length $x$ which represents an element of $F_k$ and using Britton's Lemma to successively remove innermost $t^\pm\ldots t^\mp$ pinch pairs. Therefore, the distortion of the $F_k$ kernel in $G_k$ is equivalent to $x^k$. In the application, the terminal vertex group $S$ in the top graph of groups in Figure~\ref{fig:gog0} is taken to be $F_k \rtimes_\varphi \Z$ and so the distortion of $F_k$ in $S$ is equivalent to $x^k$. 
 
 These presentation 2--complexes are examples of the the building block complex $K_0$ that is used in the next subsection. In this case, $K_0$ has one 0--cell. 

\medskip

\noindent
\textbf{Example (2).} \emph{$\CAT(0)$ hyperbolic free-by-cyclic groups.} There are many ways of producing $\CAT(0)$ hyperbolic free-by-cyclic groups. For example, one can use the previous subsection with $k=1$ to obtain a non-positively curved piecewise euclidean 2--complex $Y_1$ with fundamental group $F_\ell \rtimes \Z$. By construction, the kernel $F_\ell$ is the fundamental group of a rose, $R_\ell$, in $Y_1$. The distortion of $F_\ell$ in $F_\ell \rtimes \Z$ is exponential because the semidirect product is hyperbolic. In the application, the terminal vertex group $S$ in the top graph of groups in Figure~\ref{fig:gog0} is taken to be $F_\ell \rtimes \Z$ and so the distortion of $F_\ell$ in $S$ is equivalent to $\exp(x)$. 

The complex $Y_1$ is another example of the building block complex $K_0$ that is used in the next subsection. In this case, $K_0$ has one 0--cell. 

\medskip

\noindent
\textbf{Example (3).} \emph{The $\CAT(0)$ $F_2 \rtimes \Z$ group of \cite{MR3705143}.} A key ingredient in the construction in \cite{MR3705143} of 6--dimensional $\CAT(0)$ groups which contain finitely presented snowflake subgroups 
is a particular 
$\CAT(0)$ group of the form $F_2 \rtimes_\varphi \Z$ with palindromic monodromy $\varphi$ of exponential growth.

The group $F_2 \rtimes_\varphi \Z$ is the fundamental group of a 
2--dimensional non-positively curved piecewise euclidean cell complex, labelled $Z_0$ in section 4 of \cite{MR3705143}. This complex has 2 vertices. We subdivide this complex by introducing horizontal edges labelled $x$ and $y$ at one of the two vertices (these are drawn and labelled in Figure~2 of \cite{MR3705143}). This subdivision introduces an embedded rose $R_2$ in the 1--skeleton which represents the kernel $F_2$.

In \cite{MR3705143} there are $\CAT(0)$ groups and snowflake subgroups built from the $m$--fold cyclic covers of $Z_0$ corresponding to $F_2 \rtimes_{\varphi^m}\Z$ for each integer $m \geq 1$. 
The preimage of the rose $R_2$ in the $m$--fold cyclic cover consists of $m$ disjoint roses, each isomorphic to $R_2$. 
Although the distortion of $F_2$ in $F_2 \rtimes_{\varphi^m} \Z$ is exponential, in the application the terminal vertex group $S$ in the top graph of groups in Figure~\ref{fig:gog0} is the snowflake group of \cite{MR3705143}. The distortion of $F_2$ in the snowflake group $S$ is $x^\alpha$. 

The $m$--fold cyclic covers of the subdivided $Z_0$ (for $m \geq 1$) are examples of the building block complex $K_0$ used in the next subsection. In this case, $K_0$ may have more than one 0--cell (the $m$--fold covering of the subdivided $Z_0$ complex has $2m$ 0--cells).

\subsection{Non-positively curved complexes for $G_n$ and $G_n \ast_{H_n} G_n$.}

The $\CAT(0)$ structures for $G_n$ and $G_n \ast_{H_n}G_n$ are built from the blocks in subsections~5.1 and 5.2 together with a $\CAT(0)$ space corresponding to the terminal vertex group $T$ in the lower graph of groups. The building process uses the following three constructions in order:
\begin{enumerate}
    \item Ultra-convex chaining;
    \item Factor-diagonal chaining;
    \item Doubling.
\end{enumerate}
The chaining operations in (1) and (2) are iterated adjunctions of spaces. The reader could see \cite{MR1867354} on page 12 and page 13 for the definition of adjunction (where it is called \emph{attaching spaces}) and notation. We describe these two chaining operations in detail.

\medskip

\noindent
\emph{Ultra-convex chaining.} Start with a 2--dimensional, non-positively curved complex, $K_0$, with fundamental group $F_{k_1} \rtimes \Z$ and a subcomplex $R_{k_1} \subseteq K_0$ which is a rose based at a vertex $v_0 \in K_0$ and which represents the $F_{k_1}$ kernel in $\pi_1(K_0, v_0)$. Three different types of such complexes $K_0$ are described in subsection~5.2 above. 

By repeatedly applying Theorem~\ref{prop:fbyf}, one obtains a sequence of of 2--dimensional, non-positively curved complexes 
$$
K_0, Y_{k_1}, Y_{k_2}, \ldots, Y_{k_n}
$$
with fundamental groups $\pi_1(Y_{k_i}) = F_{k_{i+1}} \rtimes F_{k_i}$, together with roses 
$$
R_{k_1} \subseteq K_0, \; R_{k_2} \subseteq Y_{k_1}, \; \ldots , \; R_{k_n} \subseteq Y_{k_{n-1}}
$$
and locally isometric embeddings 
$f_i: R_{k_i} \hookrightarrow Y_{k_i}$ with ultra-convex images, for $1 \leq i \leq n$. Here, $\pi_1(R_{k_{i+1}})$ is the kernel $F_{k_{i+1}}$ and $\pi_1(f_i(R_{k_i}))$ is the quotient $F_{k_i}$ of $\pi_1(Y_{k_i})$. 

Define iterated adjunction spaces, $K_n$, inductively as follows 
$$
K_i \; =\; Y_{k_i} \sqcup_{f_i} K_{i-1} \quad {\hbox{ for $1 \leq i \leq n$.}} 
$$

The next lemma describes the geometry of $K_n$ and relates its fundamental group to groups $H_i$, defined as in Proposition~\ref{prop:embed0}.

\begin{lem}[ultra-convex chaining]
\label{nested}
Let $K_0, Y_{k_1}, \ldots, Y_{k_n}$ and $K_1, \ldots , K_n$ be as defined above. Define the groups $H_i$ for $0 \leq i \leq n$ by 
$H_0 = \pi_1(K_0)$ and 
$$H_i \; =\;  (F_{k_{i+1}} \rtimes F_{k_i}) \ast_{F_{k_i}} H_{i-1} \text{ for $1\leq i \leq n$}$$ as in Proposition~\ref{prop:embed0}. 
Then the following hold for $1 \leq i \leq n$
\begin{enumerate}
    \item $H_i=\pi_1(K_i)$;
    \item Each $K_i$ is a 2--dimensional, non-positively curved complex; 
    \item Each inclusion $K_{i-1} \hookrightarrow K_{i}$ is a locally isometric embedding.
\end{enumerate}
Moreover, if all loops in the vertex links of $K_0$ are strictly greater than $2\pi$ (so that $H_0$ is a hyperbolic group), then each group $H_i$ is hyperbolic for $i \geq 1$. 
\end{lem}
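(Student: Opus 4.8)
The plan is to prove all four assertions simultaneously by induction on $i$, since the inductive step for each statement relies on the others at level $i-1$. The base case $i=0$ is immediate: $H_0 = \pi_1(K_0)$ by definition, $K_0$ is a $2$--dimensional non-positively curved complex by hypothesis, and (in the ``moreover'' part) $H_0$ is hyperbolic whenever all vertex links of $K_0$ have systole strictly greater than $2\pi$, by the Flat Plane Theorem (\cite{MR1744486}, III.H.1.5), exactly as in Step~4 of the proof of Theorem~\ref{prop:fbyf}. For the inductive step, assume the statements hold at level $i-1$. First I would establish (2) and (3): recall that $K_i = Y_{k_i} \sqcup_{f_i} K_{i-1}$, where $f_i \colon R_{k_i} \hookrightarrow Y_{k_i}$ is a locally isometric embedding onto an ultra-convex, hence locally convex, subcomplex (Definition~\ref{defn:uc}), and $R_{k_i} \subseteq K_{i-1}$ is a rose which is also locally convex in $K_{i-1}$ (its only vertex link is a discrete set, so the link condition holds vacuously and it is locally convex; one should note the length of the shortest $1$--cell is normalized to $1$ on both sides, or rescale). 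Both $R_{k_i}$ as a subcomplex of $Y_{k_i}$ and as a subcomplex of $K_{i-1}$ are closed, locally convex and complete, so Proposition~\ref{BH2} applies and shows that $K_i$ is non-positively curved; it is visibly $2$--dimensional, proving (2). Since $R_{k_i}$ is locally convex in $K_i$ as well, both inclusions $Y_{k_i} \hookrightarrow K_i$ and $K_{i-1}\hookrightarrow K_i$ are locally isometric embeddings, which gives (3).

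Next I would prove (1). By van Kampen's theorem applied to the adjunction $K_i = Y_{k_i}\sqcup_{f_i}K_{i-1}$ along the connected subcomplex $R_{k_i}$, we get $\pi_1(K_i) = \pi_1(Y_{k_i}) \ast_{\pi_1(R_{k_i})} \pi_1(K_{i-1})$. Here $\pi_1(Y_{k_i}) = F_{k_{i+1}} \rtimes F_{k_i}$ by Theorem~\ref{prop:fbyf}, the amalgamating subgroup $\pi_1(R_{k_i})$ is identified via $f_i$ with the quotient copy of $F_{k_i}$ inside $F_{k_{i+1}}\rtimes F_{k_i}$ (again by Theorem~\ref{prop:fbyf}(2),(3)), and it is identified inside $\pi_1(K_{i-1}) = H_{i-1}$ (inductive hypothesis) with the subgroup $F_{k_i}$ that serves as the amalgamating subgroup in the definition $H_i = (F_{k_{i+1}}\rtimes F_{k_i}) \ast_{F_{k_i}} H_{i-1}$. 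One needs to check that these two identifications of $F_{k_i}$ agree with those used to build $H_i$ in Proposition~\ref{prop:embed0}; this is a matter of tracking the roses through the construction (the rose $R_{k_i}\subseteq Y_{k_{i-1}}$ from Theorem~\ref{prop:fbyf}(2) represents the kernel $F_{k_i}$ of $\pi_1(Y_{k_{i-1}})$, and equals the rose used as $R_{k_i}$ in forming $K_i$, while $f_i(R_{k_i})\subseteq Y_{k_i}$ represents the quotient $F_{k_i}$). Hence $\pi_1(K_i) \cong H_i$, proving (1).

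Finally, for the ``moreover'' statement, assume $H_0$ is hyperbolic. I would argue that $H_i$ is hyperbolic for $i\ge 1$ by induction: $H_i = (F_{k_{i+1}}\rtimes F_{k_i})\ast_{F_{k_i}} H_{i-1}$ is an amalgam of two hyperbolic groups (the free-by-free factor is hyperbolic by Theorem~\ref{prop:fbyf}, and $H_{i-1}$ is hyperbolic by induction) over the subgroup $F_{k_i}$, which is quasiconvex in both factors --- in the free-by-free group because $R_{k_i}$ is ultra-convex hence locally convex and $\pi_1$-injective, so its image is a convex (hence quasiconvex) subgroup of the $\CAT(0)$ hyperbolic group, and in $H_{i-1}=\pi_1(K_{i-1})$ because $R_{k_i}\subseteq K_{i-1}$ is locally convex in the non-positively curved, negatively-curved-at-large-scale complex $K_{i-1}$ (here I use that $K_{i-1}$ has hyperbolic fundamental group, which follows from the inductive hypothesis together with $i-1 \ge 0$ and the assumption on $K_0$). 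By the combination theorem of Bestvina--Feighn (or the fact that a hyperbolic group amalgamated over a quasiconvex malnormal-enough subgroup with matching quasiconvex embeddings remains hyperbolic --- more precisely one invokes Bestvina--Feighn's acylindricity criterion, which holds here because the edge group is quasiconvex and the relevant annuli are bounded), $H_i$ is hyperbolic.

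\textbf{Main obstacle.} The step I expect to require the most care is the hyperbolicity part: verifying the acylindricity/annuli-boundedness hypothesis of the Bestvina--Feighn combination theorem for the amalgam $(F_{k_{i+1}}\rtimes F_{k_i})\ast_{F_{k_i}} H_{i-1}$. Quasiconvexity of the edge group $F_{k_i}$ on both sides follows cleanly from local convexity of the roses, but one must still rule out long thin annuli straddling the edge, which is where the specific ``ultra-convexity'' (separation by at least $2\pi$ in the links, Theorem~\ref{prop:fbyf}(3)) should be doing real work --- it forces geodesics entering and leaving the $R_{k_i}$ rose from the $Y_{k_i}$ side to diverge, bounding the width of such annuli. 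Making this precise, rather than the topology and the non-positive-curvature bookkeeping, is the crux.
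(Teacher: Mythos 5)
There is a genuine gap. Your proof of (2) and (3) hinges on the assertion that the rose $R_{k_i}$ is locally convex in $K_{i-1}$, justified by ``its only vertex link is a discrete set, so the link condition holds vacuously.'' This confuses non-positive curvature of $R_{k_i}$ as a standalone graph with local convexity of $R_{k_i}$ as a subcomplex of $K_{i-1}$. The latter requires that the points of $Lk(v,R_{k_i})$ be pairwise at distance at least $\pi$ in $Lk(v,K_{i-1})$, which is exactly the content of Definition~\ref{defn:uc} with $\pi$ in place of $2\pi$; it is certainly not vacuous. And it is false here: $R_{k_i}$ is the rose $R_\ell$ of horizontal $1$--cells $a_i^{(p)}$ in $Y_{k_{i-1}}$, and already in the template link (Figure~\ref{fig:link}) one sees, e.g., that $a_1^-$ and $a_8^-$ are both adjacent to $n_1^+$, so they are $2\pi/3$ apart. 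Theorem~\ref{prop:fbyf}(3) guarantees ultra-convexity only for $R_k$ (the quotient rose), not for $R_\ell$. The whole design of the ``ultra-convex chaining'' is that one side is non-convex and the other is ultra-convex --- see the paper's introduction --- so Proposition~\ref{BH2} simply does not apply, and your step establishing (2) and (3) collapses.

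The paper's actual argument avoids this by verifying the link condition at the glued vertex directly: a new injective loop in $Lk(v_0,K_i)$ that crosses the gluing locus contains at least one arc in $Lk(v_0,Y_{k_i})$ joining two of the $2k_i$ glue points, and by ultra-convexity that single arc already has length at least $2\pi$, so the loop is long regardless of how short the $K_{i-1}$ portions are. This one-sided estimate is where ultra-convexity earns its keep. Your ``moreover'' argument inherits the same defect (it again invokes local convexity of $R_{k_i}$ in $K_{i-1}$ to get quasiconvexity) and is also considerably heavier than needed: once one knows all link loops in $K_i$ have length strictly greater than $2\pi$, the Flat Plane Theorem gives hyperbolicity immediately, with no appeal to Bestvina--Feighn or acylindricity. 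I recommend replacing the Reshetnyak-gluing step by the direct link computation.
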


\begin{proof}
Property (1) follows by induction and van Kampen's theorem. 

By hypothesis $K_0$ is a 2--complex, and each $Y_{k_i}$ is a 2--complex by Theorem~\ref{prop:fbyf}. Therefore, the $K_i$ are 2--complexes.

The complexes $K_i$ are shown to be non-positively curved by induction. Since the $K_i$ are $2$--dimensional piecewise euclidean complexes, it is sufficient to verify that every loop in the link of each vertex of $K_i$ has length at least $2\pi$. This is true for $K_0$ by hypothesis. For each $i\geq 1$ the link of the base vertex $v_0 \in K_i$ is obtained by gluing together the links of base vertices in $K_{i-1}$ and $Y_{k_i}$, along a set of $2k_i$ points (namely, the link of the base vertex in the rose $R_{k_i}$) which is $2\pi$--separated in the latter link
$$
Lk(v_0, X_k) \; =\; Lk(v_0, X_{k-1}) \, \cup_{Lk(v_0, R_{k_i})} Lk(v_0, Y_{k_i}). 
$$
The links of other vertices in $K_{i-1}$ (if any) are unaffected by the adjunction of $Y_{k_i}$. 

 By induction, all loops in the the link of each vertex in $K_{i-1}$ have length at least $2\pi$. By Theorem~\ref{prop:fbyf} all loops in $Lk(v_0, Y_{k_i})$ have length strictly greater than $2\pi$. So we only need to consider the loops created when the link of the base vertex $X_{i-1}$ is glued to the link of the vertex in $Y_{k_1}$ along the set $Lk(v_0, R_{k_i})$. By ultra-convexity, the mutual distances between these $2k_i$ points on the $Y_{k_i}$ side are at least $2\pi$. The distances between these points on the $K_{i-1}$ side are all non-zero. Therefore, the new circuits created in the glueing all have length strictly greater than $2\pi$ and so each $K_i$ is non-positively curved (see Theorem~\ref{lkcondition}). 
 
 Next, we show that each inclusion $K_{i-1} \hookrightarrow K_{i}$ is a locally isometric embedding. To do this, we need to verify that local geodesic paths in $K_{i-1}$ remain local geodesics in $K_{i}$. It suffices to check that this is the case at the base vertex $v_0 \in K_{i-1}$. A local geodesic through $v_0$ determines a pair of points in the link $Lk(v_0, K_{i-1})$ which are at least $\pi$ apart. The only way for this to fail to be a local geodesic in $K_{i}$ is if these two points are less than $\pi$ apart in the link $Lk(v_0, K_{i})$. But this is impossible by the structure of $Lk(v_0, X_k)$ described above and since paths in $Lk(v_0, Y_{k_i})$ which connect points of $Lk(v_0, K_{i})$
are at least $2\pi$ in length.
 
Finally, suppose that all loops in the the link of each vertex in $K_0$ have length strictly greater than $2\pi$. By an analogous induction argument as above, all loops in the the link of each vertex in $K_{i}$ have length strictly greater than $2\pi$. Therefore, the universal cover of each complex $K_i$ is a $\CAT(0)$ space with a cocompact group action by isometries which does not admit any embedded flat planes. By Flat Plane Theorem (see \cite{MR1744486} III.H.1.5) each group $H_i=\pi_1(K_i)$ is hyperbolic.
\end{proof}

\begin{rem}
 The idea of glueing two non-positively curved 2--complexes by identifying an ultra-convex graph in one complex with a (possibly non-convex) copy of the graph in the other complex to obtain a non-positively curved result was used in \cite{MR2252898}. 
\end{rem}

\medskip

\noindent
\emph{Factor-diagonal chaining.} It may be helpful if the reader refers to the schematic diagram in Figure~\ref{fig:heart} for this discussion. Each right triangle in that figure corresponds to a metric product of non-positively curved spaces whose labels are given on the edges adjacent to the right angle. The hypothenuse corresponds to a diagonally embedded subspace in this product space.

The factor-diagonal chaining construction uses two ingredients. 
\begin{enumerate}
    \item The output of the ultra-convex chaining construction. That is, the 2--dimensional, non-positively curved complex $K_n$ together with the nested sequence of locally isometrically embedded subcomplexes $K_0 \subseteq K_1 \subseteq \cdots \subseteq K_n$ of Lemma~\ref{nested}. 
    \item A non-positively curved space $Z_0$ with fundamental group $T$ and a locally isometric embedding $K_0 \hookrightarrow Z_0$, identifying $\pi_1(K_0)$ with the subgroup $H_0\leq T$ of Proposition~\ref{prop:embed0}.
\end{enumerate}

The idea behind the factor-diagonal chaining construction is that $K_i \times K_{i-1}$ contains a locally convex diagonal copy of $K_{i-1}$ which can be glued to the first factor of $K_{i-1} \times K_{i-2}$. However, as we saw in Lemma~\ref{pd}, the diagonal copy of $K_{i-1}$ is isometric to the scaled space $\sqrt{2}K_{i-1}$ so we need to keep scaling by $\sqrt{2}$ in the chaining process. Here are the details. 

\begin{enumerate}
    \item For each integer $1 \leq i \leq n$, define $Z_i$ to be the space $(\sqrt{2})^{n-i}K_i \times (\sqrt{2})^{n-i}K_{i-1}$ with the product metric. It is non-positively curved. Since $K_{i-1}$ locally isometrically embeds into $K_i$, the space $(\sqrt{2})^{n-i}K_{i-1}\times (\sqrt{2})^{n-i}K_{i-1}$ locally isometrically embeds into $(\sqrt{2})^{n-i}K_i \times (\sqrt{2})^{n-i}K_{i-1}$. Therefore, the diagonal $$\Delta_{(\sqrt{2})^{n-i}K_{i-1}} \; \subseteq \; (\sqrt{2})^{n-i}K_{i-1}\times (\sqrt{2})^{n-i}K_{i-1}$$ locally isometrically embeds into $(\sqrt{2})^{n-i}K_i\times (\sqrt{2})^{n-i}K_{i-1} = Z_i$. In the product metric, the subspace $\Delta_{(\sqrt{2})^{n-i}K_{i-1}}$ is isomorphic to the scaled space $$(\sqrt{2})^{n-i+1}K_{i-1} \; =\;  (\sqrt{2})^{n-(i-1)}K_{i-1}.$$ 
    \item For each integer $1 \leq i \leq n$, there is a locally isometric embedding of $\Delta_{(\sqrt{2})^{n-i}K_{i-1}} \subseteq Z_i$ into $Z_{i-1}$ with image the first factor $(\sqrt{2})^{n-(i-1)}K_{i-1}$. Furthermore, there is a locally isometric embedding of 
    $\Delta_{(\sqrt{2})^{n-1}K_0} \subseteq Z_1$ into $(\sqrt{2})^nZ_0$ with image $(\sqrt{2})^nK_0$. 
    \item Define the space $Z$ to be the iterated adjunction of the spaces $(\sqrt{2})^nZ_0, Z_1,  \ldots, Z_n$ using these subspaces and locally isometric embeddings. The space $Z$ is represented by one half of the diagram in Figure~\ref{fig:heart}. 
\end{enumerate}

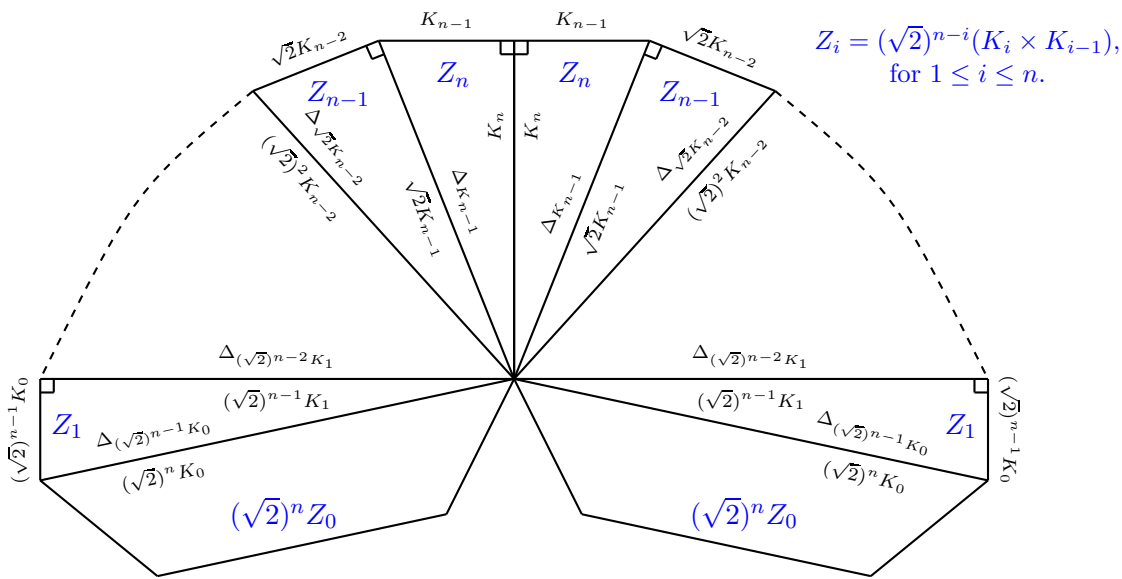
\begin{figure}[hb]
   \centering
    \begin{tikzpicture}[scale=0.9]
    
    
     \tikzstyle{every node}=[font=\tiny]

     \draw (0.9,4.5) node[font=\normalsize, blue]{$Z_n$}; 
     \draw (-0.9,4.5) node[font=\normalsize, blue]{$Z_n$}; 
      \draw (2.6,4.2) node[blue, font=\normalsize]{$Z_{n-1}$}; 
     \draw (-2.6,4.2) node[blue, font=\normalsize]{$Z_{n-1}$}; 
      \draw (6.6,-0.7) node[blue, font=\normalsize]{$Z_1$}; 
     \draw (-6.6,-0.7) node[blue, font=\normalsize]{$Z_1$}; 
      \draw (-3.4,-2) node[blue, font=\normalsize]{$(\sqrt{2})^n
      Z_0$}; 
     \draw (3.4,-2) node[blue, font=\normalsize]{$(\sqrt{2})^nZ_0$}; 
     
     \draw (6.7,5) node[blue, font=\small]{$Z_i = (\sqrt{2})^{n-i}(K_i \times K_{i-1})$,}; 
     \draw (6.7,4.5) node[blue, font=\small]{for $1\leq i \leq n$.};

     \draw[thick] (0,0)--(0,5) 
     node[near end, above, sloped]{$K_n$}
     node[near end, below, sloped]{$K_n$}; 
      \draw[thick] (2,5)--(0,5)
      node[midway, above]{$K_{n-1}$}; 
     \draw[thick] (0,0)--(2,5) node[midway, above, sloped]{$\Delta_{K_{n-1}}$}
     node[midway, below, sloped]{$\sqrt{2}K_{n-1}$};
     \draw[thick] (0.2, 4.8)--(0, 4.8);
     \draw[thick] (0.2, 4.8)--(0.2,5);

     \draw[thick] (2,5)--({2 + 2*5/sqrt((2)^2 + (5)^2)}, {5 + -2*2/sqrt((2)^2+(5)^2)})
     node[midway, above, sloped]{$\sqrt{2}K_{n-2}$}; 
     
     \draw[thick] (0,0)--({2 + 2*5/sqrt((2)^2 + (5)^2)}, {5 + -2*2/sqrt((2)^2+(5)^2)})
     node[near end, below, sloped]{$(\sqrt{2})^2K_{n-2}$}
     node[near end, above, sloped]{$\Delta_{\sqrt{2}K_{n-2}}$}; 
     
     \draw[thick] ({2 + 0.2*5/sqrt((2)^2 + (5)^2)}, {5 + -0.2*2/sqrt((2)^2+(5)^2)})--
     ({0.96*(2 + 0.2*5/sqrt((2)^2 + (5)^2))}, {0.96*(5 + -0.2*2/sqrt((2)^2+(5)^2))}); 
     
     \draw[thick] (0.96*2, 0.96*5)--
     ({0.96*(2 + 0.2*5/sqrt((2)^2 + (5)^2))}, {0.96*(5 + -0.2*2/sqrt((2)^2+(5)^2))});

     \draw[thick] (0,0)--(7,0) node[midway, above, sloped]{$\Delta_{(\sqrt{2})^{n-2}K_1}$}
     node[midway, below, sloped]{$(\sqrt{2})^{n-1}K_1$};
     \draw[thick] (0,0)--(7,-1.5) 
     node[near end, below, sloped]{$(\sqrt{2})^nK_0$}
     node[near end, above, sloped]{$\Delta_{(\sqrt{2})^{n-1}K_0}$};
     \draw[thick] (7,0)--(7,-1.5) node[midway, above, sloped]{$(\sqrt{2})^{n-1}K_0$};
     
     \draw[thick] (6.8,-0.2)--(7,-0.2);
     \draw[thick] (6.8,-0.2)--(6.8,0);
     
     
     \draw[thick, dashed] ({2 + 2*5/sqrt((2)^2 + (5)^2)}, {5 + -2*2/sqrt((2)^2+(5)^2)}) .. controls (5.6,2.8) .. (7,0);
     
     
     \draw[thick] (0,0)--(1,-2);
     \draw[thick] ({8-140/(49 + 9/4)}, {-3.5 + 30/(49 + 9/4)})--(1,-2); 
     \draw[thick] ({8-140/(49 + 9/4)}, {-3.5 + 30/(49 + 9/4)})--(7,-1.5);

     \draw[thick] (0,0)--(0,5); 
      \draw[thick] (-2,5)--(0,5)
      node[midway, above]{$K_{n-1}$}; 
     \draw[thick] (0,0)--(-2,5) node[midway, below, sloped]{$\sqrt{2}K_{n-1}$}
     node[midway, above, sloped]{$\Delta_{K_{n-1}}$};
     
     \draw[thick] (-0.2, 4.8)--(0, 4.8);
     \draw[thick] (-0.2, 4.8)--(-0.2,5); 

     \draw[thick] (-2,5)--({-(2 + 2*5/sqrt((2)^2 + (5)^2))}, {5 + -2*2/sqrt((2)^2+(5)^2)}) 
     node[midway, above, sloped]{$\sqrt{2}K_{n-2}$}; 
     
     \draw[thick] (0,0)--({-(2 + 2*5/sqrt((2)^2 + (5)^2))}, {5 + -2*2/sqrt((2)^2+(5)^2)}) 
     node[near end, below, sloped]{$(\sqrt{2})^2K_{n-2}$}
     node[near end, above, sloped]{$\Delta_{\sqrt{2}K_{n-2}}$};

       \draw[thick] ({-(2 + 0.2*5/sqrt((2)^2 + (5)^2))}, {5 + -0.2*2/sqrt((2)^2+(5)^2)})--
     ({-0.96*(2 + 0.2*5/sqrt((2)^2 + (5)^2))}, {0.96*(5 + -0.2*2/sqrt((2)^2+(5)^2))}); 
     
     \draw[thick] (-0.96*2, 0.96*5)--
     ({-0.96*(2 + 0.2*5/sqrt((2)^2 + (5)^2))}, {0.96*(5 + -0.2*2/sqrt((2)^2+(5)^2))});


     \draw[thick] (-7,0)--(0,0) 
     node[midway, above, sloped]{$\Delta_{(\sqrt{2})^{n-2}K_1}$}
     node[midway, below, sloped]{$(\sqrt{2})^{n-1}K_1$};
     \draw[thick] (0,0)--(-7,-1.5)
     node[near end, below, sloped]{$(\sqrt{2})^nK_0$}
     node[near end, above, sloped]{$\Delta_{(\sqrt{2})^{n-1}K_0}$};
     \draw[thick] (-7,-1.5)--(-7,0) node[midway, above, sloped]{$(\sqrt{2})^{n-1}K_0$};
     \draw[thick] (-6.8,-0.2)--(-7,-0.2);
     \draw[thick] (-6.8,-0.2)--(-6.8,0);
     
      \draw[thick, dashed] ({-(2 + 2*5/sqrt((2)^2 + (5)^2))}, {5 + -2*2/sqrt((2)^2+(5)^2)}) .. controls (-5.6,2.8) .. (-7,0);
     
     
     \draw[thick] (0,0)--(-1,-2);
     \draw[thick] ({-(8-140/(49 + 9/4))}, {-3.5 + 30/(49 + 9/4)})--(-1,-2); 
     \draw[thick] ({-(8-140/(49 + 9/4))}, {-3.5 + 30/(49 + 9/4)})--(-7,-1.5); 
     
     \end{tikzpicture} 
\caption{The heart of the $\CAT(0)$ construction. The space $Z$ corresponds to one half of this diagram. } \label{fig:heart}
\end{figure}

The next lemma describes the geometry of the space $Z$ (resp.\ the double of $Z$ over the subspace $K_n$) and shows that its fundamental group is the group $G_n$ (resp.\ $G_n \ast_{H_n}G_n$) of Proposition~\ref{prop:embed0}.

\begin{lem}[factor-diagonal chaining]
\label{cool}
Let $n$ be a positive integer and let $$K_0 \; \subseteq\;  K_1 \; \subseteq \; \cdots \; \subseteq \; K_n$$ be a sequence of non-positively curved spaces as in Lemma~\ref{nested}. Suppose that $K_0$ locally isometrically embeds into a non-positively curved space $Z_0$ whose fundamental group is $T$, identifying $\pi_1(K_0)$ with the subgroup $H_0 \leq T$ of Proposition~\ref{prop:embed0}. 
\begin{itemize}
    \item Define spaces $Z_i$ for $1 \leq i \leq n$ and $Z$ by the factor-diagonal chaining construction above.
    \item Let $G_1 = (H_1 \times H_0) \ast_{(\Delta_{H_0} \equiv H_0)} T$ and $$G_i \; = \; (H_i \times H_{i-1}) \ast_{(\Delta_{H_{i-1}} \equiv H_{i-1} \times 1)} G_{i-1}\quad {\hbox{for $2 \leq i \leq n$}}$$ be as defined in Proposition~\ref{prop:embed0}. 
\end{itemize}
Then $Z$ is a non-positively curved space 
 satisfying the following properties:
\begin{enumerate}
    \item $Z$ has dimension $\max\{4, \dim Z_0\}$;
    \item $\pi_1(Z)= G_n$; 
    \item  the space $K_n$ locally isometrically embeds as the first factor into $K_n \times K_{n-1} \subseteq Z$ inducing the inclusion of $H_n$ as the first factor $H_n \times 1 \leq H_n \times H_{n-1} \leq G_n$.
\end{enumerate}
Moreover, the double of $Z$ over $K_n$ in (3) above is a non-positively curved space of dimension equal to $\max\{4, \dim Z_0\}$ with fundamental group $G_n\ast_{H_n}G_n$ where 
$H_n$ includes as the first factor in $G_n$. 
\end{lem}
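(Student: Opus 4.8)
The plan is to verify the three enumerated properties of $Z$ by an induction that parallels the factor-diagonal chaining construction, and then deduce the statement about the double by a single application of Proposition~\ref{BH2}. First I would set up the induction by considering the partial adjunction spaces. For $0 \le i \le n$, let $W_i$ denote the iterated adjunction of the spaces $(\sqrt{2})^n Z_0, Z_1, \ldots, Z_i$, so that $W_0 = (\sqrt{2})^n Z_0$ and $W_n = Z$. The inductive claim is that each $W_i$ is non-positively curved, has fundamental group $G_i$ (with $G_0 := T$), has dimension $\max\{4, \dim Z_0\}$ once $i \ge 1$, and contains $(\sqrt{2})^{n-i} K_i$ as a locally convex subspace (embedded as the first factor of $Z_i$ when $i \ge 1$) realizing the subgroup $H_i \times 1 \le H_i \times H_{i-1} \le G_i$. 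The base case $i=0$ is the hypothesis on $Z_0$, noting $K_0$ locally isometrically embeds identifying $\pi_1(K_0)$ with $H_0 \le T = G_0$; scaling by $(\sqrt 2)^n$ changes nothing about local convexity, non-positive curvature, or fundamental group.

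For the inductive step, I would pass from $W_{i-1}$ to $W_i = Z_i \sqcup_{f_i} W_{i-1}$. By Lemma~\ref{pd}(1), $Z_i = (\sqrt{2})^{n-i}(K_i \times K_{i-1})$ is non-positively curved; by Lemma~\ref{nested}, $K_{i-1} \hookrightarrow K_i$ is a locally isometric embedding, so by Lemma~\ref{pd}(3) the diagonal $\Delta_{(\sqrt 2)^{n-i}K_{i-1}} \subseteq Z_i$ is a locally convex (hence locally isometrically embedded) copy of $(\sqrt{2})^{n-i+1}K_{i-1} = (\sqrt{2})^{n-(i-1)}K_{i-1}$. By the inductive hypothesis this last space is exactly the locally convex subspace of $W_{i-1}$ carrying $H_{i-1} \times 1$ (or $H_0 \le T$ when $i=1$). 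Both subspaces are closed, complete, and locally convex, and the gluing map $f_i$ is a bijective local isometry between them, so Proposition~\ref{BH2} gives that $W_i$ is non-positively curved. Van Kampen's theorem, using Lemma~\ref{pd}(2) and Lemma~\ref{pd}(3) to identify $\pi_1(Z_i) = H_i \times H_{i-1}$ and the images of the two subgroups being glued, yields $\pi_1(W_i) = (H_i \times H_{i-1}) \ast_{(\Delta_{H_{i-1}} \equiv H_{i-1}\times 1)} \pi_1(W_{i-1}) = G_i$ (the $i=1$ case giving $G_1 = (H_1 \times H_0)\ast_{(\Delta_{H_0}\equiv H_0)} T$ directly). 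For dimension: $\dim Z_i = \dim K_i + \dim K_{i-1} \le 2 + 2 = 4$ since each $K_j$ is $2$-dimensional by Lemma~\ref{nested}, so $\dim W_i = \max\{\dim W_{i-1}, \dim Z_i\} = \max\{4, \dim Z_0\}$. The subspace claim for $W_i$ is just the first factor $(\sqrt{2})^{n-i}K_i \subseteq Z_i$, which is locally convex by Lemma~\ref{pd}(2) and disjoint from the gluing locus, hence survives as a locally convex subspace of $W_i$ carrying $H_i \times 1$. Taking $i=n$ completes the proof of (1), (2), (3), noting $(\sqrt 2)^{n-n}K_n = K_n$.

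For the final sentence about the double, I would observe that $K_n \subseteq K_n \times K_{n-1} \subseteq Z$ is a closed, complete, locally convex subspace by property (3) (Lemma~\ref{pd}(2) again), and it is locally convex on \emph{both} copies of $Z$ in the disjoint union $Z \sqcup Z$. The identity map $K_n \to K_n$ is trivially a bijective local isometry, so Proposition~\ref{BH2} applies and the quotient — the topological double of $Z$ along $K_n$ — is non-positively curved, of the same dimension $\max\{4,\dim Z_0\}$. Its fundamental group is $\pi_1(Z) \ast_{\pi_1(K_n)} \pi_1(Z) = G_n \ast_{H_n} G_n$ by van Kampen, with $H_n$ included as $H_n \times 1 \le G_n$ on each side by property (3).

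I expect the main obstacle to be purely bookkeeping rather than conceptual: carefully tracking the scaling factors $(\sqrt 2)^{n-i}$ through the chain so that the diagonal in $Z_i$ comes out isometric to exactly the first factor sitting inside $Z_{i-1}$ (the off-by-one in the exponent, $(\sqrt 2)^{n-i+1} = (\sqrt 2)^{n-(i-1)}$, is where errors would creep in), and making sure that at each adjunction the subspace realizing $H_i \times 1$ and the subspace being glued to the next stage are genuinely disjoint so that local convexity is preserved under the quotient. The non-positive curvature and fundamental-group computations are immediate applications of Lemma~\ref{pd}, Proposition~\ref{BH2}, and van Kampen, and the dimension bound uses only that the $K_j$ are $2$-dimensional.
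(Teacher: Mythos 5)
Your proof follows essentially the same route as the paper's: an induction on the partial adjunction spaces $W_i$, with Lemma~\ref{pd}, Proposition~\ref{BH2}, and van Kampen's theorem supplying the non-positive curvature, the fundamental group identification, and the dimension count at each step. The paper leaves the induction implicit; you make it explicit, which is a reasonable expansion of the same argument.

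There is, however, one genuine error in the justification of the subspace claim, and it is exactly the point you flag at the end as a concern. You assert that the first factor $(\sqrt{2})^{n-i}K_i \times \{y_0\}$ is \emph{disjoint} from the gluing locus $\Delta_{(\sqrt{2})^{n-i}K_{i-1}}$, and you use this disjointness to conclude that local convexity of the first factor persists in $W_i$. But since the basepoint $y_0$ must lie in the second factor $K_{i-1}$ (and is naturally taken to be the common base vertex $v_0 \in K_{i-1} \subseteq K_i$), the two subspaces are not disjoint: they intersect in the single point $(v_0, v_0)$. So the ``disjoint, hence survives'' step fails at that vertex, where a neighborhood in $W_i$ contains material from $W_{i-1}$ as well as from $Z_i$. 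The conclusion is still true, but needs a different argument: the piece $Z_i$ itself remains locally convex in the amalgamation $W_i = Z_i \sqcup_{\Delta} W_{i-1}$ (at a point $p$ of the gluing locus, $\Lk(p,W_i)$ is $\Lk(p,Z_i)$ glued to $\Lk(p,W_{i-1})$ along the $\pi$-convex subset $\Lk(p,\Delta)$, and this gluing leaves each side $\pi$-convex), and then the first factor, being locally convex in $Z_i$ by Lemma~\ref{pd}(2), is locally convex in $W_i$ by transitivity of local convexity. With that substitution the argument goes through and matches the paper's reasoning.
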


\begin{proof}The space $Z$ is defined by the factor-diagonal chaining above. Since each $K_i$ is 2-dimensional, the products $(\sqrt{2})^{n-i}(K_i \times K_{i-1})$ are 4--dimensional and so $Z$ has dimension equal to $\max\{4, \dim Z_0\}$. 

For each $1 \leq i \leq n$ the inclusion 
$(\sqrt{2})^{n-i+1} K_{i-1} \to (\sqrt{2})^{n-i}(K_i \times K_{i-1})$ is a locally isometric embedding with image $\Delta_{(\sqrt{2})^{n-i}K_{i-1}}$ and induces the diagonal embedding $H_{i-1} \to \Delta_{H_{i-1}} \leq H_i \times H_{i-1}$. Also, for each $1\leq i \leq n$ inclusion $(\sqrt{2})^{n-i}K_i \to (\sqrt{2})^{n-i}(K_i \times K_{i-1})$ is a locally isometric embedding with image the first factor space 
and induces the embedding $H_i \to H_i \times 1 \leq H_i \times H_{i-1}$. Finally, by hypothesis there is a locally isometric embedding $(\sqrt{2})^n K_0 \to (\sqrt{2})^nZ_0$ inducing the group embedding $H_0 \to T$. Using Lemma~\ref{pd}, Proposition~\ref{BH2}, van Kampen's Theorem, and working by induction on $n$ we conclude that $Z$ is a non-positively curved space and has fundamental group $G_n$. 

There is a locally isometric embedding $K_n \to K_n \times K_{n-1} \subseteq Z$ with image the first factor of $K_n \times K_{n-1}$ and which induces the inclusion $H_n \to H_n \times 1 \leq H_n\times H_{n-1}\leq G_n$.

Since the inclusion $K_n \to Z$ above is a locally isometric embedding, the double of $Z$ over $K_n$ is non-positively curved by Lemma~\ref{pd} and Proposition~\ref{BH2}. It has dimension equal to the dimension of $Z$ and has fundamental group $G_n \ast_{H_n}G_n$.
\end{proof}

\medskip

\section{The Main Theorem and open questions}

The main theorem of this paper follows by combining the $\CAT(0)$ constructions of Section~5 for various choices of the space $Z_0$, the group embedding result of Section~3, and the Dehn function computations of Section~4. We recall the construction here. 

\medskip

\noindent
\textbf{Construction.} The construction takes as input the following collection of spaces and groups.
\begin{enumerate}[(i)]
    \item A non-positively curved space $Z_0$ with fundamental group $T$. 
    
    \item A locally isometrically embedded subspace $K_0 \subseteq Z_0$ whose fundamental group is $F_{k_1} \rtimes \Z$ which is identified with a subgroup $H_0 \leq T$. 
    
    \item A rose $R_{k_1} \subseteq K_0$ whose fundamental group is the kernel $F_{k_1} \leq F_{k_1} \rtimes \Z = H_0$. 
    
    \item A subgroup $S \leq T$ such that 
    $$
    S \cap H_0 \; = \; F_{k_1}
    $$
    is the free kernel of $H_0$ and such that 
    $\Dist_{F_{k_1}}^S$ is equivalent to a non-decreasing, super-additive function $f(x)$. 
\end{enumerate}

For $1\leq i\leq n$ let $F_{k_{i+1}}\rtimes F_{k_i}$ be an inductively defined sequence of hyperbolic, $\CAT(0)$ groups which are constructed as in Theorem~\ref{prop:fbyf}. As in Proposition~\ref{prop:embed0}, define sequences of groups $H_i$ and $G_i$ for $1\leq i \leq n$ and $L_i$ for $0 \leq i \leq n$ inductively as follows:
\begin{enumerate}
    \item $H_0$ is the group in (2) above and $$H_i \; =\;  (F_{k_{i+1}} \rtimes F_{k_i}) \ast_{F_{k_i}} H_{i-1} \quad {\hbox{for $1\leq i \leq n$,}}$$ 
    \item $G_1 = (H_1 \times H_0) \ast_{(\Delta_{H_0} \equiv H_0)} T$ where $T$ is given in (1) above and $$G_i \; = \; (H_i \times H_{i-1}) \ast_{(\Delta_{H_{i-1}} \equiv H_{i-1} \times 1)} G_{i-1}\quad {\hbox{for $2 \leq i \leq n$, and}}$$ 
    \item $L_0 = S$ where $S$ is the group given in (4) above and 
    $$L_i \; = \; (F_{k_{i+1}} \rtimes F_{k_i}) \ast_{F_{k_i}} L_{i-1}\quad {\hbox{for $1 \leq i \leq n$.}}$$ 
\end{enumerate}
Let $C_n$ be the double of $G_n$ over $H_n$ and let $D_n$ be the double of $H_n$ over $F_{k_{n+1}}$.

\begin{prop}
\label{main}
$C_n$ is a $\CAT(0)$ group with geometric dimension at most $\max\{4,\dim Z_0\}$ and $C_n$ contains a copy of $D_n$ as a subgroup. Moreover, if the Dehn function of group $L_n$ is dominated by some polynomial function, the the Dehn function of $D_n$ is equivalent to $\exp^{(n)}(f(x))$.
\end{prop}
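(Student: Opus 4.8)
The plan is to assemble Proposition~\ref{main} directly from the three main engines of the paper, feeding the outputs of one into the hypotheses of the next. The logical skeleton is: (a) the $\CAT(0)$-ness and dimension bound of $C_n$ come from the factor-diagonal chaining Lemma~\ref{cool}; (b) the subgroup inclusion $D_n \hookrightarrow C_n$ comes from the graph-of-groups embedding Proposition~\ref{prop:embed0}; and (c) the Dehn function computation comes from Proposition~\ref{prop:distortion-short}, whose polynomial-isoperimetric hypothesis on $L_n$ is precisely the extra assumption in the ``moreover'' clause.

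First I would verify that the inputs (i)--(iv) of the Construction supply exactly the data required by Proposition~\ref{prop:embed0}: the diagram of inclusions among $A_0 = F_{k_1}$, $H_0$, $S$, and $T$ must satisfy the Bass conditions. The Bass intersection condition $S \cap H_0 = F_{k_1}$ is hypothesis (iv), and commutativity is automatic since all four maps are inclusions of subgroups of $T$; this is exactly the situation recorded in Remark~\ref{rem:apps}. With the semidirect products $F_{k_{i+1}} \rtimes F_{k_i}$ (built via Theorem~\ref{prop:fbyf}) and the isomorphisms $\theta_i : F_{k_i} \to F_{k_i}$ in hand, Proposition~\ref{prop:embed0} then gives the embedding $L_n \ast_{F_{k_{n+1}}} L_n = D_n \hookrightarrow G_n \ast_{H_n} G_n = C_n$. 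Next I would invoke the ultra-convex chaining Lemma~\ref{nested} to produce the nested sequence $K_0 \subseteq \cdots \subseteq K_n$ of non-positively curved $2$--complexes with $\pi_1(K_i) = H_i$, and then feed this together with $Z_0$ (hypothesis (i), with $K_0 \hookrightarrow Z_0$ inducing $H_0 \leq T$) into the factor-diagonal chaining Lemma~\ref{cool}. That lemma directly yields a non-positively curved space whose double over $K_n$ has dimension $\max\{4, \dim Z_0\}$ and fundamental group $G_n \ast_{H_n} G_n = C_n$; taking the universal cover gives the $\CAT(0)$ action, so $C_n$ is a $\CAT(0)$ group of geometric dimension at most $\max\{4, \dim Z_0\}$.

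For the ``moreover'' clause, assuming $\delta_{L_n}$ is dominated by a polynomial, I would apply Proposition~\ref{prop:distortion-short} verbatim: with $S$ the terminal vertex group, $F_{k_1} \leq S$ distorted like the non-decreasing super-additive $f$ (hypothesis (iv)), and each $F_{k_{i+1}} \rtimes F_{k_i}$ hyperbolic, the proposition concludes that $\Dist_{F_{k_{n+1}}}^{L_n} \sim \exp^{(n)}(f(x))$ and hence that the Dehn function of $D_n = L_n \ast_{F_{k_{n+1}}} L_n$ is equivalent to $\exp^{(n)}(f(x))$.

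Honestly, the proof of Proposition~\ref{main} as such is mostly bookkeeping — the real content lives in Lemmas~\ref{nested} and~\ref{cool}, Proposition~\ref{prop:embed0}, and Proposition~\ref{prop:distortion-short}, all of which are already established. The one point deserving genuine care is the matching between the group-theoretic recursion defining $H_i$, $G_i$, $L_i$ in the Construction and the geometric recursion defining $K_i$, $Z_i$, $Z$ in Section~5: one must check that the locally isometric embeddings used in the factor-diagonal chaining (diagonal $\Delta_{K_{i-1}}$ into the first product, first factor $K_i$ into $K_i \times K_{i-1}$, and $K_0$ into $Z_0$) induce precisely the group inclusions ($\Delta_{H_{i-1}}$, $H_i \times 1$, $H_0 \hookrightarrow T$) appearing in the amalgamated-product descriptions of $G_i$, so that $\pi_1(Z) = G_n$ and $\pi_1(\text{double}) = G_n \ast_{H_n} G_n$ with $H_n$ embedded as $H_n \times 1$. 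This compatibility is exactly what Lemma~\ref{cool}(2),(3) asserts, so the main obstacle is really just confirming that the hypotheses of Lemma~\ref{cool} are met by the Construction's inputs — which they are by (i)--(iii) and Lemma~\ref{nested}.
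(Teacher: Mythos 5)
Your proof is correct and follows exactly the paper's own route: cite Lemma~\ref{cool} for the $\CAT(0)$ structure and dimension bound on $C_n$, cite Proposition~\ref{prop:embed0} for the embedding $D_n = L_n \ast_{F_{k_{n+1}}} L_n \hookrightarrow C_n$ (after noting that conditions (i)--(iv) of the Construction supply the required Bass square and chaining data), and cite Proposition~\ref{prop:distortion-short} for the Dehn function of $D_n$. The paper states this proof in three sentences; your additional verification of the hypothesis-matching (especially the role of condition (iv) as the Bass intersection condition and the compatibility of the geometric and group-theoretic recursions via Lemmas~\ref{nested} and~\ref{cool}) is a correct unpacking of what those citations are implicitly doing.
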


\begin{proof}
The group $C_n$ is $\CAT(0)$ of dimension at most $\max\{4, \dim Z_0\}$ by Lemma~\ref{cool}. The group embedding $D_n \leq C_n$ is established in Proposition~\ref{prop:embed0}. The Dehn function computation is given in Proposition~\ref{prop:distortion-short}. 
\end{proof}

\begin{thm}[main theorem]
\label{main2}
Let $n$ be a positive integer and define $\exp^{(n)}(x)$ inductively by $\exp^{(1)}(x) = \exp(x)$ and $\exp^{(k+1)}(x) = \exp(\exp^{(k)}(x))$ for $k \geq 1$. Then
\begin{enumerate}
    \item There are $\CAT(0)$ groups of geometric dimension 4 containing finitely presented subgroups whose Dehn functions are $\exp^{(n)}(x^m)$ for integers $m \geq 2$. The maximal rank of free abelian subgroups in these groups is 4.
     \item There are $\CAT(0)$ groups of geometric dimension 4 containing finitely presented subgroups whose Dehn functions are $\exp^{(n)}(x)$ for integers $n \geq 2$. The maximal rank of free abelian subgroups in these groups is 2.
    \item  There are $\CAT(0)$ groups of geometric dimension 6 containing finitely presented subgroups whose Dehn functions are $\exp^{(n)}(x^\alpha)$ for $\alpha$ dense in $[1,\infty)$. The maximal rank of free abelian subgroups in these groups is 6.
\end{enumerate}
\end{thm}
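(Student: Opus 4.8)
The plan is to obtain all three conclusions from Proposition~\ref{main} by feeding it three different choices of the input data $(Z_0,T,K_0,R_{k_1},S,f)$ of the Construction preceding that proposition, in each case taking the hyperbolic $\CAT(0)$ groups $F_{k_{i+1}}\rtimes F_{k_i}$ from Theorem~\ref{prop:fbyf}. Only the terminal data change between cases, so for each case I would (a) check that the diagram of inclusions $A_0\to S$, $A_0\to H_0$, $S\to T$ satisfies the Bass conditions, (b) verify that $L_n$ has a polynomial isoperimetric inequality, and (c) compute the maximal rank of a free abelian subgroup of the resulting group $C_n$; the assertions about $\delta_{D_n}$ and $\dim C_n$ are then immediate from Proposition~\ref{main}.

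For conclusions (1) and (2) the plan is to use the free-by-cyclic building blocks of Subsection~\ref{freebycyclic}. For (1), fix an integer $m\geq 2$, put $H_0=S=\Gamma_m=F_m\rtimes_\varphi\Z$ (Example~(1) there), let $K_0$ be its presentation $2$--complex, $R_{k_1}=R_m$ the rose of diagonals, $T=\Gamma_m\times\Z$, and $Z_0=K_0\times S^1$, so $\dim Z_0=3$; the Bass conditions hold by the second application of Remark~\ref{rem:apps} (equivalently by Lemma~\ref{lem:bass1}), one has $S\cap H_0=F_m=F_{k_1}$, and $\Dist^S_{F_m}$ is equivalent to the non-decreasing, super-additive function $f(x)=x^m$. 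Since $S\cong H_0$ compatibly with the distinguished free kernels, $L_n\cong H_n=\pi_1(K_n)$, and Lemma~\ref{nested} says $K_n$ is a non-positively curved $2$--complex; hence $H_n$ is $\CAT(0)$ and $\delta_{L_n}$ is quadratic, so Proposition~\ref{main} yields a $4$--dimensional $\CAT(0)$ group with a finitely presented subgroup of Dehn function $\exp^{(n)}(x^m)$. Conclusion (2) is identical but starts from Example~(2): $H_0=S=\pi_1(Y_1)$, the hyperbolic $\CAT(0)$ group $F_\ell\rtimes\Z$, with $K_0=Y_1$, $T=\pi_1(Y_1)\times\Z$, $Z_0=Y_1\times S^1$, and $f(x)=\Dist^S_{F_\ell}\sim\exp(x)$. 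Here the vertex links of $K_0=Y_1$ have length strictly greater than $2\pi$ by Theorem~\ref{prop:fbyf}(1), so the final clause of Lemma~\ref{nested} makes every $H_i$ hyperbolic; thus $\delta_{L_n}=\delta_{H_n}$ is linear, Proposition~\ref{main} applies, and the $4$--dimensional $\CAT(0)$ group it produces has a finitely presented subgroup of Dehn function $\exp^{(n)}(\exp(x))=\exp^{(n+1)}(x)$, which after relabelling $n$ gives $\exp^{(n)}(x)$ for each $n\geq 2$.

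For conclusion (3) the plan is to take $S$ to be a snowflake group of \cite{MR3705143}, $T$ the ambient $6$--dimensional $\CAT(0)$ group constructed there together with its non-positively curved model $Z_0$ (so $\dim Z_0=6$), $H_0=F_2\rtimes\Z$ the convex subgroup of $T$, $K_0$ the relevant subdivided cyclic cover appearing in Example~(3) of Subsection~\ref{freebycyclic}, and $R_{k_1}=R_2$ the rose carrying the kernel. The Bass conditions for the terminal diagram are exactly Lemma~5.3 of \cite{MR3705143} (the first application of Remark~\ref{rem:apps}), and $\Dist^S_{F_2}$ is equivalent to $f(x)=x^\alpha$ for $\alpha$ ranging over a dense subset of $[1,\infty)$; since $\alpha\geq 1$, $f$ is non-decreasing and super-additive. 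Because $S$ is not $\CAT(0)$ one cannot argue as in (1)--(2); instead $\delta_{L_n}$ is shown to be dominated by a polynomial via Proposition~\ref{polydehn} and Remark~\ref{coolcool} applied along the amalgam decomposition of $L_n$, using that the snowflake group has polynomial Dehn function and that each $F_{k_{i+1}}\rtimes F_{k_i}$ is hyperbolic. Proposition~\ref{main} then gives $6$--dimensional $\CAT(0)$ groups with finitely presented subgroups of Dehn function $\exp^{(n)}(x^\alpha)$, $\alpha$ dense in $[1,\infty)$.

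The maximal rank of a free abelian subgroup of $C_n=G_n\ast_{H_n}G_n$ is obtained by the usual bookkeeping: a free abelian subgroup of an amalgam lies in a conjugate of a vertex group or acts on the Bass--Serre tree with kernel inside an edge group, and a free abelian subgroup of a direct product has rank the sum of the ranks of its two projections. In case (2) every $H_i$ and every $F_{k_{i+1}}\rtimes F_{k_i}$ is hyperbolic (abelian subgroups of rank $\leq 1$), so $H_i\times H_{i-1}$ and $T=\pi_1(Y_1)\times\Z$ have rank $\leq 2$ and the amalgams $G_i$ and $C_n$ stay at rank $\leq 2$; since $T\supseteq\Z^2$ the rank is exactly $2$. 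In case (1) the same computation applies, except $H_0=\Gamma_m$ is $F_m$-by-$\Z$, hence of rank $\leq 2$ and containing $\Z^2$; this raises the bound at the stage $H_i\times H_{i-1}$ to $4$, the bound $4$ persists through the amalgams, and $\Z^4\leq H_1\times H_0\leq C_n$, so the rank is exactly $4$. In case (3) all the $F_{k_{i+1}}\rtimes F_{k_i}$, and hence all the $H_i$, are hyperbolic, so no factor-product or amalgam raises the rank above that of $T$, which is $6$ by the explicit description in \cite{MR3705143}. The step I expect to be the main obstacle is (b) in case (3): establishing the polynomial isoperimetric inequality for $L_n$ when $S$ is a snowflake group, since the edge subgroups $F_{k_i}$ become iterated-exponentially distorted inside the successive $L_{i-1}$, so Remark~\ref{coolcool} cannot be applied directly and the polynomial bound on $\delta_{L_n}$ must be extracted from the corridor estimate in the proof of Proposition~\ref{polydehn}, using that each $F_{k_{i+1}}\rtimes F_{k_i}$ has linear Dehn function.
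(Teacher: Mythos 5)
Your overall strategy—feed Proposition~\ref{main} three sets of input data, check the Bass conditions, verify the polynomial isoperimetric inequality for $L_n$, and compute ranks—matches the paper's. Cases (1) and (2) are handled essentially as in the paper (using $S\cong H_0$ embedded via $\langle F_{k_1},tu\rangle$, and relabelling $n\mapsto n-1$ in case (2)), and your rank computations via Bass--Serre theory agree with the paper's up to routine differences. But there is a genuine gap at exactly the step you flag as the main obstacle in case (3), and you do not close it.

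You correctly observe that Remark~\ref{coolcool} cannot be applied ``along the amalgam decomposition'' of $L_n$ (i.e., inductively through $L_0,L_1,\dots,L_n$), since the edge group $F_{k_i}$ is iterated-exponentially distorted in $L_{i-1}$ for $i\geq 2$. Your proposed fallback---``extract the polynomial bound from the corridor estimate in Proposition~\ref{polydehn}, using that each $F_{k_{i+1}}\rtimes F_{k_i}$ has linear Dehn function''---does not work either: if you run the corridor argument over the full $(n+1)$-vertex graph-of-groups decomposition of $L_n$, the corridors labelled by $F_{k_i}$ for $i\geq 2$ still have iterated-exponential length, and a linear Dehn function for the free-by-free vertex groups does nothing to control those corridors. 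The resolution in the paper is to \emph{re-associate} the iterated amalgam: by associativity of amalgams, $L_n = H'\ast_{F_{k_1}} S$ where $H' = (F_{k_{n+1}}\rtimes F_{k_n})\ast_{F_{k_n}}\cdots\ast_{F_{k_2}}(F_{k_2}\rtimes F_{k_1})$. Then $H'$ is hyperbolic (Lemma~\ref{nested} applied with $Y_{k_1}$, whose vertex links are strictly larger than $2\pi$ by Theorem~\ref{prop:fbyf}(1), in the role of $K_0$), and $F_{k_1}=F_2$ is a retract of $H'$ (kill $F_{k_2}$ in the terminal factor and send all other vertex groups to $1$). Remark~\ref{coolcool} then applies in a single shot with $C=F_2$, $A=H'$, $B=S$: since $C$ is a retract of $A$, $B=S$ is a retract of $L_n$, so $\Dist_C^{L_n}\sim\Dist_C^S\sim x^\alpha$ is polynomial, and Proposition~\ref{polydehn} gives a polynomial bound on $\delta_{L_n}$. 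Without this re-association the argument collapses.

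A smaller but real secondary gap: Proposition~\ref{main} only gives the \emph{upper} bound $\dim C_n\leq\max\{4,\dim Z_0\}$, so the claimed geometric dimensions are not ``immediate'' from it. In cases (1) and (3) your $\Z^4$ and $\Z^6$ subgroups implicitly give the lower bound, but in case (2) there is no $\Z^4$ subgroup (you show the rank is $2$), so a separate argument is needed. The paper obtains $\dim C_{n-1}\geq 4$ from the fact that the subgroup $H_0\times H_0$ (a product of two non-free, non-virtually-free hyperbolic groups) has geometric dimension exactly $4$, quoting \cite{MR4072474} and Brown's book. You would need to supply this or an equivalent argument.
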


\begin{proof}

We prove the theorem by choosing suitable input data for the construction above.

For Statement (1) we let $H_0 = F_{k_1}\rtimes_{\varphi} \langle t \rangle$ be one of the free-by-cyclic groups from Example~(1) of Subsection~5.2. In that subsection we proved that $\Dist^{F_{k_1}\rtimes_{\varphi} \langle t \rangle}_{F_{k_1}}$ is equivalent to $x^m$ and $F_{k_1}\rtimes_{\varphi} \langle t \rangle$ is the fundamental group of a complex $K_0$ satisfying condition (iii) and half of condition (ii) of the construction above. 
Let $Z_0=K_0\times S_1$ with the product metric. Then $Z_0$ is a non-positively curved space of dimension $3$ and the inclusion $K_0\subset Z_0$ into the first factor is a locally isometric embedding. This gives condition (i) and the remainder of condition (ii). 

Let $T=\pi_1(Z_0)=(F_{k_1}\rtimes_{\varphi} \langle t \rangle) \times \langle u\rangle$ and $H_0=\pi_1(K_0)=F_{k_1}\rtimes_{\varphi} \langle t \rangle$. Let $S$ be the subgroup of $T$ generated by $F_{k_1}$ and $tu$. Then $S$ is the free-by-cyclic group $F_{k_1}\rtimes_{\varphi} \langle tu \rangle$, $S\cap H_0=F_{k_1}$, and the distortion function $\Dist^S_{F_{k_1}}$ is also equivalent to $x^m$. This gives condition (iv) of the construction. 

In this case, the two groups $L_n$ and $H_n$ are isomorphic and therefore they are both $\CAT(0)$ groups by Lemma~\ref{nested}. This implies that the Dehn function of $L_n$ is at most quadratic. Therefore, the $\CAT(0)$ group $C_n$ constructed above contains a subgroup whose Dehn function is $\exp^{(n)}(x^m)$ by Proposition~\ref{main}. 
Also, the group $C_n$ has geometric dimension at most 4 by Proposition~\ref{main} and at least 4 by the existence of $\Z^4$ subgroups. Therefore, the geometric dimension of $C_n$ is exactly $4$ and the maximal rank of free abelian subgroups in $C_n$ is also $4$. 

The proof of Statement (2) is almost identical to the proof of Statement (1) except we let $H_0 = F_{k_1}\rtimes_{\varphi} \langle t \rangle$ be a $\CAT(0)$ hyperbolic free-by-cyclic group as in Example~(2) of Section~\ref{freebycyclic} and we consider the $\CAT(0)$ group $C_{n-1}$ instead of the $\CAT(0)$ group $C_{n}$.
As we proved for Statement (1), the $\CAT(0)$ group $C_{n-1}$ in this case has geometric dimension at most $4$ and it contains a subgroup $D_{n-1}$ whose Dehn function is $\exp^{(n)}(x)$ by Proposition~\ref{main}. Moreover, the subgroup $H_0\times H_0$ of $C_n$ has dimension exactly $4$ (by \cite{MR4072474} and Chapter~VIII Corollary~(7.2) in \cite{MR1324339}). Therefore, the geometric dimension of $C_{n-1}$ is also exactly $4$. 

The group $C_{n-1}$ contains a $\Z^2$ subgroup since (for example) it contains the subgroup $T = (F_{k_1} \rtimes \Z) \times \Z$. We argue that $C_{n-1}$ does not contain $\Z^3$ by contradiction. Assume that $C_{n-1}$ contains $\Z^3$. Note that each group $H_j \leq C_{n-1}$ in this case is hyperbolic (see Lemma~\ref{nested}) and so $C_{n-1} = G_{n-1} \ast_{H_{n-1}} G_{n-1}$ is the fundamental group of a graph of groups whose vertex groups are direct products of two hyperbolic groups and edge groups are hyperbolic. Therefore, the intersection of $\Z^3$ with each edge group conjugate is either $\Z$ or trivial. Since $\Z^3$ does not split nontrivially over $\Z$ or 1, we conclude that $\Z^3$ is conjugate into some vertex group. This is a contradiction since each vertex group is a direct product of two hyperbolic groups and does not contain $\Z^3$. This implies that the maximal rank of free abelian subgroups in $C_{n-1}$ is $2$.



For Statement (3) we let $H_0 = F_2 \times \Z$ be the $\CAT(0)$ group from Example~(3) of Subsection~5.2. In this case, $S$ is the snowflake group and $T$ the ambient $\CAT(0)$ group from \cite{MR3705143}. The precise connection with \cite{MR3705143} is described in Table~\ref{table}.
\begin{table}[ht]
\begin{tabular}{|c | c|}
\hline
{\bf Current paper} & {\bf \cite{MR3705143} paper} \\
\hline
$\CAT(0)$ group $T$ & $\CAT(0)$ group $G_{T,n}$ in Theorem 4.5\\
\hline
$T= \pi_1(Z_0)$ where $Z_0$ is n.p.c. & $G_{T,n} = \pi_1(K_{T,n})$ where $K_{T,n}$ is n.p.c.\\
\hline
Convex subgroup $H_0=F_2\rtimes \Z \leq T$ & Convex subgroup $B_{v_0} \leq G_{T,n}$\\
\hline
$H_0= \pi_1(K_0)$ where  &  
$B_{v_0} = \pi_1(L_0)$ where 
\\
$K_0 \subset Z_0$ is convex & $L_0 \subset K_{T,n}$ is convex (eqn.\ (4.3)) \\
\hline
Subgroup $S$ of $T$ & Subgroup $S_{T,n}$ of $G_{T,n}$ in Theorem 5.6\\

\hline
Subgroup $F_2 = S\cap H_0$ & Subgroup $A_{v_0} = B_{v_0} \cap S_{T,n}$\\
\hline
\end{tabular}
\caption{Correspondence between current construction and \cite{MR3705143}. The notation n.p.c.\ denotes non-positively curved.}\label{table}
\end{table}
Note that the first two rows of the table give condition (i) of the construction. Rows 3 and 4 of the table give condition (ii) of the construction. 
The rose $R_2$ of condition (iii) is described in Example~(3) of Subsection~5.2. Finally, condition (iv) is assured by the last two rows of the table. In particular, the Bass intersection property is a key result in \cite{MR3705143}. 

The distortion $\Dist^{S}_{F_{k_1}}=\Dist^{S}_{F_{2}}$ is computed implicitly in \cite{MR3705143} and is stated explicitly in Lemma 5.6 in \cite{MR4388786}. It is equivalent to $x^\alpha$ for $\alpha$ dense in $[1,\infty)$. Let $L_n$ and $C_n$ be groups constructed above. By construction, $L_n$ is the amalgamation $H\ast_{F_2}S$ where $H$ is a hyperbolic group (see Lemma~\ref{nested}) and $F_2$ is a retract of $H$. 
Moreover, the Dehn function of $S$ is equivalent to $x^{2\alpha}$ (see \cite{MR3705143}) and so is bounded above by a polynomial, the Dehn function of $H$ is linear, and the distortion of $F_2$ in $S$ is equivalent to $x^\alpha$. By Proposition~\ref{polydehn} and Remark~\ref{coolcool} the Dehn function of $L_n$ is dominated by a polynomial. By Proposition~\ref{main}, the $\CAT(0)$ group $C_n$ contains a subgroup whose Dehn function is $\exp^{(n)}(x^\alpha)$. 

We note that the dimension of the space $Z_0$ chosen above in this case is $6$ (see \cite{MR3705143}). Therefore, the geometric dimension of $C_n$ is at most 6 by Proposition~\ref{main} and is at least 6 by the existence of $\Z^6$ subgroups (which are contained in the $T$ vertex groups). Therefore, the geometric dimension of $C_n$ is exactly $6$ and the maximal rank of free abelian subgroups in $C_n$ is also $6$.
\end{proof}

Note that the $\CAT(0)$ ambient groups in the Main Theorem have geometric dimension 4 or 6. Also, the maximum rank of a free abelian subgroup is 2 in the $\exp^n(x)$ examples, 4 in the $\exp^n(x^m)$ (for integers $m \geq 2$) examples, and 6 in the $\exp^{(n)}(x^\alpha)$ examples. Also, these $\CAT(0)$ groups are not relatively hyperbolic with proper peripheral subgroups. These observations (and related considerations) prompt the following questions. 
 
\begin{ques}[3--dimensions] The constructions of the $\CAT(0)$ spaces in the Main Theorem use products and so the ambient $\CAT(0)$ groups have geometric dimension at least 4.
\begin{enumerate}
\item Do there exist $\CAT(0)$ groups of geometric dimension 3 which contain finitely presented subgroups with superexponential Dehn function?
\item Is there an upper bound on the Dehn functions of finitely presented subgroups of $3$--dimensional $\CAT(0)$ groups? 
\end{enumerate}
\end{ques}

Corollary~1.3 of \cite{2020arXiv200811090K} implies that it is impossible to build 2--dimensional examples; indeed, if a finitely presented group coarsely embeds into a $\CAT(0)$ group of geometric dimension 2, then it must satisfy a quadratic isoperimetric inequality. 
 If one is investigating 3--dimensional constructions, it is good to be aware of \cite{MR3276848} which gives a general treatment of tower arguments and their implications for the structure of finitely presented subgroups in various settings.

\begin{ques}[relatively hyperbolic] The prevalence of product structures in our examples mean that the groups are not relatively hyperbolic with proper peripheral subgroups. Indeed, by construction, our groups are geometrically thick and so are not relatively hyperbolic (see Corollary 7.9 in \cite{MR2501302}). The following questions are interesting. 
\begin{enumerate}
    \item Are there $\CAT(0)$ groups which are relatively hyperbolic with abelian peripheral structure and which contain finitely presented subgroups with superexponential Dehn function?
    \item Is there an upper bound on the Dehn functions of finitely presented subgroups of relatively hyperbolic $\CAT(0)$ with abelian peripheral structure?
\end{enumerate}
\end{ques}

The various definitions of relatively hyperbolic groups involve actions on hyperbolic metric spaces. Furthermore, there is a short path from relatively hyperbolic groups to hyperbolic groups via hyperbolic Dehn fillings. Perhaps investigations into the previous set of  questions will shed light on the next set.

\begin{ques}[hyperbolic] The following questions are folklore. 
\begin{enumerate} 
\item Are there hyperbolic groups which contain finitely presented subgroups with Dehn function bounded below by an arbitrary polynomial function? 
\item Are there hyperbolic groups which contain finitely presented subgroups with exponential or superexponential Dehn functions?
\end{enumerate} 
\end{ques}

 There are finitely presented subgroups of hyperbolic groups which are not hyperbolic. See \cite{MR1724853}, \cite{MR3822289}, and \cite{2018arXiv180809505K} for examples which are not of type FP$_3$. In \cite{2021arXiv210514795I} there is an example of a  finite type, non-hyperbolic subgroup of a hyberbolic group. In \cite{2021arXiv211206531L} and \cite{2022arXiv220405788L} there are finitely presented subgroups of hyperbolic groups which are of type FP$_n$ but not FP$_{n+1}$ for $n \geq 3$. The Dehn functions of these subgroups (in all six references above) are bounded below by a quadratic function and are bounded above by a polynomial \cite{MR1934010}.

\begin{ques}[Akermannian] 
Are there $\CAT(0)$ groups containing finitely presented subgroups with Akermannian Dehn functions? 
\end{ques}

The hydra groups of \cite{MR3093501} (and the hyperbolic hydra groups of \cite{MR3134032}) are 2-dimensional $\CAT(0)$ groups containing finite rank free subgroups with Akermannian distortion. One can amalgamate a hyperbolic 
$F_\ell \rtimes F_k$ group with ultra-convex $F_k$ with a hydra group, identifying the $F_k$ with the highly distorted free subgroup in the hydra group. This yields a 2-dimensional $\CAT(0)$ group containing a free-by-free subgroup and a convex hydra subgroup. It is tempting to explore if some variation of the constructions in the current paper can produce subgroups of $\CAT(0)$ groups with Ackermannian Dehn functions. A direct application of the construction in this paper will not work because the highly distorted subgroup of the hydra group is not normal.

\begin{ques}[cubical, RAAG] The construction in the Main Theorem makes use of amalgamations along diagonal subgroups of direct products. This suggests that these $\CAT(0)$ groups might not act properly and cocompactly by isometries on $\CAT(0)$ cube complexes. Here are related questions.
\begin{enumerate}
\item Are there $\CAT(0)$ cubical groups containing finitely presented subgroups with superexponential Dehn functions?
\item Are there RAAGs containing finitely presented subgroups with superexponential Dehn functions?
\end{enumerate}
\end{ques}

It is known that RAAGs contain finitely presented subgroups with exponential Dehn function \cite{MR3151642} and polynomial Dehn function of arbitrary degree \cite{MR3956192}.

\bibliographystyle{alpha}
\bibliography{Tran}
\end{document}